\documentclass{amsart}
\usepackage{amssymb, amsmath, latexsym, mathabx, dsfont, tikz}
\usepackage{multirow}
\usepackage{setspace}
\usepackage{enumerate}
\usepackage{diagbox}
\usepackage{stmaryrd}
\usepackage{caption}
\usetikzlibrary{arrows.meta}

\linespread{1.3}
\renewcommand{\baselinestretch}{\baselinestretch}
\renewcommand{\baselinestretch}{1.1}
\numberwithin{equation}{section}

\newtheorem{thm}{Theorem}[section]
\newtheorem{lem}[thm]{Lemma}

\newtheorem{prop}[thm]{Proposition}

\theoremstyle{remark}
\newtheorem{rmk}[thm]{Remark}

\numberwithin{equation}{section}

\newcommand{\ra}{{\ \longrightarrow \ }}
\newcommand{\nra}{{\ \longarrownot\longrightarrow \ }}
\newcommand{\ratwo}{{\ \overset{2}{\longrightarrow} \ }}
\newcommand{\nratwo}{{\ \overset{2}{\longarrownot\longrightarrow} \ }}
\newcommand{\ral}[1]{\overset{\lambda_{#1}}{\ra}}
\newcommand{\lral}[1]{\overset{\lambda_{#1}}{\longleftrightarrow}}
\newcommand{\ord}{\text{ord}}
\newcommand{\sort}{\text{Sort}}
\newcommand{\n}{{\mathbb N}}
\newcommand{\z}{{\mathbb Z}}
\newcommand{\q}{{\mathbb Q}}
\newcommand{\E}{{\mathcal E}}
\newcommand{\T}{{\mathcal T}}
\newcommand{\Mod}[1]{\ \left(\mathrm{mod}\ #1 \right)}
\newcommand{\ltra}{{\ \relbar\joinrel\twoheadrightarrow \ }}

\begin{document}
\title{Regular triangular forms of rank exceeding 3}

\author{Mingyu Kim}

\address{Department of Mathematics, Sungkyunkwan University, Suwon 16419, Korea}
\email{kmg2562@skku.edu}

\thanks{This work was supported by the National Research Foundation of Korea(NRF) grant funded by the Korea government(MSIT) (NRF-2021R1C1C2010133).}

\subjclass[2020]{Primary 11D09, 11E12, 11E20} \keywords{Integral quadratic polynomials, triangular numbers, regular triangular forms}


\begin{abstract} 
A triangular form is defined to be an integer-valued quadratic polynomial of the form $a_1P_3(x_1)+a_2P_3(x_2)+\cdots+a_kP_3(x_k)$ where $a_i's$ are positive integers and $P_3(x)=x(x+1)/2$.
A triangular form is called regular if it represents all positive integers which are locally represented.
In this article, we find all regular triangular forms of more than three variables.
\end{abstract}

\maketitle

\section{Introduction} \label{secint}
For a polynomial $P_3(x)=x(x+1)/2$ and positive integers $a_1,a_2,\dots,a_k$, we call the integer-valued quadratic polynomial
$$
f=f(x_1,x_2,\dots,x_k)=a_1P_3(x_1)+a_2P_3(x_2)+\cdots+a_kP_3(x_k)
$$
{\it a $k$-ary triangular form}, following \cite{CO13}.
For a nonnegative integer $n$ and a $k$-ary triangular form $f$, we say that {\it $n$ is represented by $f$} if the Diophantine equation
\begin{equation} \label{eq1}
f(x_1,x_2,\dots,x_k)=n
\end{equation}
has an integer solution.
A triangular form that represents all positive integers is called {\it universal}.
Throughout, the triangular form $a_1P_3(x_1)+\cdots+a_kP_3(x_k)$ will be denoted by $\mathcal T(a_1,a_2,\dots,a_k)$.

In 1796, Gauss proved the so-called Eureka Theorem which states that every positive integer is the sum of three triangular numbers.
Under our notation, it is equivalent to saying that the ternary triangular form $\mathcal T(1,1,1)$ is universal.
This theorem was generalized by Liouville; he shows that there are exactly seven universal ternary triangular forms, which are
$$
\mathcal T(1,1,1),\ \mathcal T(1,1,2),\ \mathcal T(1,1,4),\ \mathcal T(1,1,5),\ \mathcal T(1,2,2),\ \mathcal T(1,2,3),\ \mathcal T(1,2,4).
$$
Bosma and Kane \cite{BK} classified all universal triangular forms by  proving that all of the four quaternary triangular forms
$$
\mathcal T(1,1,3,3),\ \mathcal T(1,1,3,6),\ \mathcal T(1,1,3,7),\ \mathcal T(1,1,3,8)
$$
are universal.
Note that any universal triangular form can be obtained by adding some ``redundant" triangular forms to one of the above $7+4=11$ forms.

One may also consider the solvability of Equation \eqref{eq1} over the ring of $p$-adic integers $\z_p$ for a prime $p$.
We say that {\it $n$ is locally represented by $f$} if \eqref{eq1} is solvable over $\z_p$ for every prime $p$.
A triangular form $f=\mathcal T(a_1,a_2,\dots,a_k)$ is called {\it primitive} if its coefficients are coprime, i.e., $\gcd(a_1,a_2,\dots,a_k)=1$.
Note that the primitive ternary triangular form $\mathcal T(1,1,3)$ cannot represent (over $\z$) any integer $n$ in the set
$$
A=\left\{ \frac{3^{2a+1}(3b+2)-5}{8} : a,b\in \n \cup \{0\} \right\},
$$
since $f$ does not represent any integer in $A$ over $\z_3$.
This can be shown by using the equivalence
$$
\frac{x(x+1)}{2}+\frac{y(y+1)}{2}+3\frac{z(z+1)}{2}=n \Leftrightarrow (2x+1)^2+(2y+1)^2+3(2z+1)^2=8n+5
$$
and the fact that diagonal ternary quadratic form $x^2+y^2+3z^2$ does not represent integers of the form $3^{2a+1}(3b+2)$ ($a,b\in \z_{\ge 0}$) over $\z_3$.
However, $\mathcal T(1,1,3)$ do represent (over $\z$) all positive integers not in the above set $A$ (for this, see \cite[Theorem 3.17]{KO1}).
A triangular form $f$ is said to be {\it regular} if it represents every positive integer which is locally represented.
Note that for a triangular form $f=\mathcal T(a_1,a_2,\dots,a_k)$ and a positive integer $r$, the form $rf=\mathcal T(ra_1,ra_2,\dots,ra_k)$ is regular if and only if $f$ is.
This shows that when we find regular triangular forms, we may only consider primitive ones.
The finiteness of primitive regular ternary triangular forms was established by Chan and Oh \cite{CO13}, and the complete enumeration was done by Oh and the author \cite{KO1}.

\begin{thm}[Theorem 4.10 of \cite{KO1}] \label{thmrk3}
There are exactly 49 primitive regular ternary triangular forms, as listed in Table \ref{tablerk3}.
\end{thm}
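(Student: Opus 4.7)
The plan is to use the classical equivalence $f(x_1,x_2,x_3)=n$ if and only if $a_1 y_1^2 + a_2 y_2^2 + a_3 y_3^2 = 8n+a_1+a_2+a_3$ with $y_i = 2x_i+1$ odd. This reduces the regularity problem for the ternary triangular form $\mathcal T(a_1,a_2,a_3)$ to a representation problem for a diagonal ternary quadratic form, subject to the constraint that each variable is odd. Once the translation is made, the full machinery for ternary quadratic forms becomes available.

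First I would invoke the finiteness theorem of Chan and Oh \cite{CO13}, which gives an explicit upper bound on the coefficients of any primitive regular ternary triangular form. This reduces the classification to checking a concrete finite list of candidates $(a_1,a_2,a_3)$.

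For each candidate, the two tasks are: (i) determine the set of positive integers $n$ that are locally represented by $f$, equivalently the set of $N = 8n+a_1+a_2+a_3$ locally represented by the associated diagonal ternary form with odd components, which is a finite computation at the bad primes $p=2$ and the primes dividing $a_1 a_2 a_3$; and (ii) decide whether every such $n$ is represented over $\z$. For a regular candidate the positive resolution of (ii) uses the theory of the genus and spinor genus of the diagonal form: one shows that representations in the genus descend to the class of the given form, after accounting for the odd-parity condition and any spinor exceptional integers. For an irregular candidate it suffices to exhibit a single locally represented integer that is not represented globally, typically produced by an explicit bounded search.

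The hard step will be (ii) for the borderline cases, in particular those forms whose associated ternary quadratic form has more than one spinor genus in its genus. Primitive representations may then exist in the genus but fail to occur in the relevant spinor genus or class, so one must locate and analyze the spinor exceptional integers; the odd-parity restriction on $(y_1,y_2,y_3)$ interacts nontrivially with the spinor equivalence classes of representations, and must be tracked as additional congruence data. Executing this analysis uniformly across all candidate tuples and verifying that the surviving list is exactly the $49$ forms of Table \ref{tablerk3} is the computational and technical heart of the argument.
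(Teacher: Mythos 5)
First, note that the paper you are working from does not prove this statement at all: Theorem \ref{thmrk3} is imported verbatim as Theorem 4.10 of \cite{KO1}, so the only ``proof'' here is a citation, and your proposal has to be measured against the argument of \cite{KO1} rather than anything in this article. The architecture of \cite{KO1} is visible in Subsection \ref{subsecwatson} of the present paper: one does \emph{not} start from an explicit coefficient bound and sieve a finite list. Instead one shows that the Watson-type transformations $\lambda_p$ preserve regularity of $p$-unstable ternary triangular forms, classifies the finitely many \emph{stable} regular ternary forms (the 17 forms marked with $\dag$ in Table \ref{tablerk3}), and then recovers all 49 by tracing the $\lambda_p$-transformations back. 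Your first step --- invoking the finiteness theorem of \cite{CO13} as if it handed you ``an explicit upper bound on the coefficients'' --- is not how the reduction to a finite problem is actually achieved; the finiteness proof does not produce a usable explicit candidate list, which is precisely why the enumeration in \cite{KO1} was a separate piece of work.

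The more serious gap is in your step (ii). To certify that a candidate is regular you must show that every locally represented $n$ (equivalently, every admissible $8n+a_1+a_2+a_3$ represented by the genus of $\langle a_1,a_2,a_3\rangle$ with all three variables odd) is represented by the specific class. Representations in the genus do not descend to the class of a general ternary lattice --- that failure is exactly what irregularity is --- and the spinor-genus machinery you invoke only closes the gap when the class coincides with its spinor genus, or asymptotically via Duke and Schulze-Pillot with ineffective constants; moreover the odd-parity condition is an extra mod-$2$ restriction that is not captured by spinor equivalence. The surviving lattices need not have one class per spinor genus, so ``one shows that representations in the genus descend to the class'' is an assertion of the theorem for the hard cases, not a proof of it; \cite{KO1} verifies regularity of each candidate by bespoke arguments (explicit identities, reduction to sublattices, and the $\lambda_p$ descent itself). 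Your outline correctly locates where the difficulty lives, but it does not contain the idea that resolves it.
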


We now turn to representations by quadratic forms.
A positive definite integral quadratic form
$$
g=g(x_1,x_2,\dots,x_k)=\sum_{1\le i,j\le k} a_{ij}x_ix_j\ (a_{ij}=a_{ji}\in \z)
$$
is said to represent a nonnegative integer $n$ if the Diophantine equation
$$
g(x_1,x_2,\dots,x_k)=n
$$
has an integer solution.
Throughout, a quadratic form always refers to a positive definite integral quadratic form, unless otherwise stated.
The notion of universality and the regularity of a quadratic form is as for the case of triangular forms.
Lagrange's four-square theorem says that the quadratic form $x^2+y^2+z^2+w^2$ is universal.
Ramanujan \cite{R} found all 54 universal diagonal quadratic forms (see also \cite{D}).
Conway and Schneeberger announced the so-called 15-theorem which says that a quadratic form is universal if it represents all positive integers up to 15.
Bhargava \cite{B} found a simpler proof for the 15-theorem.
Meanwhile, he showed that any universal quadratic form $f$ contains a universal quadratic form $g$ of rank less than or equal to 5 as its subform.

Note that a universal quadratic form is nothing but a regular quadratic form which is locally universal.
For a regular quadratic form $g$, we call $g$ {\it old} if there is a proper subform $h$ of $g$ such that $h$ represents all integers that are represented by $g$, and $g$ is called {\it new} if it is not old.
Under these notations, we can say that the rank of new regular quadratic forms that are locally universal is less than or equal to 5.

A classification of quaternary regular diagonal quadratic forms was carried out by B.M. Kim \cite{BMK}, but the paper was never published.
Recently, Oh and the author \cite{KO2} proved that there is a constant $C$ (which is implicit) such that the rank of any new regular quadratic form is less than $C$.
The author, in \cite{K1}, classified all regular ``diagonal" quadratic forms of rank $\ge 4$.
Note that the rank of new regular ``diagonal" quadratic forms is less than or equal to 5.

We now get back to the world of triangular forms.
We first define the notion of newness for triangular forms analogously to the case of quadratic forms.
A regular $k$-ary triangular form $f=\mathcal T(a_1,a_2,\dots,a_k)$ is called {\it old} if there is an index $i$ with $1\le i \le k$ such that the set of positive integers represented by $f$ is equal to that of the $(k-1)$-ary triangular form $g=\mathcal T(a_1,\dots,a_{i-1},\widehat{a_i},a_{i+1},\dots,a_k)$, where $\widehat{a_i}$ means that the component $a_i$ is missing.
We call $f$ {\it new} if it is not old.
As mentioned before, we already have the list of regular ternary triangular forms.
Note that any ternary regular triangular form is new (see Proposition \ref{propter}).
So now our tasks are the following;
\begin{enumerate} [(i)]
\item find all new regular triangular forms of rank exceeding 3;
\item describe all regular triangular forms using the result of (i).
\end{enumerate}
The mission will be completed by the three consecutive theorems below.

\begin{thm} \label{thmrk4}
A primitive quaternary triangular form $\T(a_1,a_2,a_3,a_4)$ with $a_1\le a_2\le a_3\le a_4$ is new regular if and only if it appears in Table \ref{tablerk4}.
\end{thm}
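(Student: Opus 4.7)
The plan is to treat Theorem \ref{thmrk4} as a two-direction classification. The central analytic device is the standard completion-of-squares identity: $\T(a_1,a_2,a_3,a_4)$ represents $n$ over $\z$ (resp.\ $\z_p$) if and only if the diagonal quadratic form $Q_{\mathbf a} = \sum_i a_i y_i^2$ represents $N := 8n + a_1 + a_2 + a_3 + a_4$ by \emph{odd} integers over $\z$ (resp.\ $\z_p$). Thus the problem reduces to studying odd-variable representations of a well-understood family of diagonal quaternary quadratic forms, so that genus theory, spinor-genus analysis, and the techniques already deployed in \cite{CO13, KO1, K1} become available.

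For the ``only if'' direction the strategy is to bound the coefficients by escalation. Since $f$ is regular and primitive, each small positive integer that is locally represented must be globally represented, which (after separating out the local obstructions at $2$) rapidly forces $a_1 = 1$. A second escalation step, comparing which small positive integers are locally represented while $a_2 P_3(x_2) + a_3 P_3(x_3) + a_4 P_3(x_4)$ is too large to contribute, constrains $a_2$, and iterating yields effective bounds on $a_3$ and $a_4$. Candidates within these bounds are then screened: one checks whether $\T(a_1,\ldots,\widehat{a_i},\ldots,a_4)$ already represents the same set of positive integers as $f$ (in which case $f$ is old and discarded), or exhibits an explicit locally represented $n$ that is not globally represented (discarding $f$ as non-regular). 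Only the members of Table \ref{tablerk4} survive.

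For the ``if'' direction I would, for each $(a_1,a_2,a_3,a_4)$ in the table, first certify newness by exhibiting for each index $i$ a positive integer represented by $f$ but not by $\T(a_1,\ldots,\widehat{a_i},\ldots,a_4)$, and then certify regularity by invoking the above correspondence. In the pleasant case when $Q_{\mathbf a}$ has class number one in its genus, every locally represented $N$ is globally represented and only the parity of the representing vector needs to be tracked, which can be organized by a case split on $N$ modulo a small power of $2$. In the remaining cases I would supplement genus theory with spinor-genus considerations, explicit descent to a suitable sublattice that isolates the odd-variable condition, or explicit descriptions of exceptional squareclasses following the pattern established in \cite{KO1}.

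The main obstacle I anticipate is precisely those entries of Table \ref{tablerk4} for which $Q_{\mathbf a}$ has more than one class in its genus, so that a pure genus-global principle is insufficient to handle odd-variable regularity. For such tuples one must show, often via ad hoc lifting arguments or by computing spinor exceptional sets, that the potential obstructions to global representation by $Q_{\mathbf a}$ with the required odd parity are either vacuous or absorbed by the slack inherent in the translation $N = 8n + \sum a_i$. A secondary burden is the bookkeeping: after the escalation step one has on the order of a few hundred quadruples to process, and producing explicit witnesses of irregularity for every discarded tuple, though mechanical, is error-prone and essentially requires computer-assisted verification cross-checked against the theoretical bounds derived in the escalation.
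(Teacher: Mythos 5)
Your proposal has a fatal structural flaw in the ``only if'' direction: the escalation/truant method cannot terminate, because the set being classified is infinite. Table \ref{tablerk4} contains infinite families such as $\T(1,1,3,3^r)$, $\T(1,3,3,2\cdot 3^r)$, and $\T(1,2,5,5^{2r})$ for all $r\in\n$, so no effective upper bound on $a_4$ exists and your estimate of ``a few hundred quadruples to process'' is wrong. The escalation step bounds $a_{4}$ only by the smallest positive integer that is locally represented by the quaternary form but not represented by $\T(a_1,a_2,a_3)$; when the ternary section is itself regular (e.g.\ $\T(1,1,3)$), every locally represented integer is already represented, and moreover the local conditions at $p=3,5,7$ vary with the divisibility of $a_4$ in exactly the way that keeps the quaternary form regular for infinitely many $a_4$. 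The paper's escalation (Lemma \ref{lemter} and the sets $U_1,U_2,U_3'$) is only used to bound the \emph{ternary section} $(a_1,a_2,a_3)$ to the finite list of Table \ref{tableu3}; the fourth coefficient is controlled by an entirely different mechanism, namely Watson $\lambda_p$-transformations. The paper shows (Proposition \ref{propdescend}, Lemma \ref{lemprimes}) that every unstable regular quaternary form flows under $\lambda_p$, $p\in\{3,5,7\}$, to one of the four universal forms $\T(1,1,3,a_4)$, $a_4\in\{3,6,7,8\}$, classifies the 27 ``drop structures'' where a new form degenerates to an old one (Proposition \ref{propdrop}), and then enumerates all preimages under $\lambda_p^{-1}$ to build the rivers of Table \ref{tablestream}; the infinite families arise precisely as the mainstreams and periodic tributaries of these rivers.

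The same issue undermines your ``if'' direction: a form-by-form genus or spinor-genus verification cannot cover infinitely many forms. The paper instead proves regularity (Proposition \ref{propreg}) by induction on the number $s=e_3+e_5+e_7$ of Watson transformations needed to reach a universal form, using the reciprocity Lemma \ref{lemreci}(ii) to pull regularity back from $\Lambda_p(\T(\mathbf{a}))$ to $\T(\mathbf{a})$ for $n$ with $t(n,\mathbf{a})\equiv 0\Mod p$, and reducing the remaining residue classes to a regular ternary section (or, for the 18 forms whose ternary section is only imprimitive regular, to an explicit congruence argument as for $\T(1,6,18,18)$). Your newness certification is also more delicate than exhibiting a single witness: by Lemma \ref{lemnew}, oldness is a purely local condition ($a_i\z_p\subseteq\overline{DQ_p(\mathbf{a}[i])}$ for all odd $p$ together with regularity of the section), and the paper must track when a new form becomes old \emph{after} a transformation, which is what the drop structures encode. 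Without the transformation machinery your outline cannot produce the table, let alone verify it.
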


To find all regular quaternary triangular forms, we will engage a transformation which preserves the regularity of triangular forms (see Subsection \ref{subsecwatson}).
Note that the transformation was used by Watson \cite{W1} when he showed the finiteness of regular ternary quadratic forms (up to equivalence).
And then we define Watson transformation on triangular forms.
We will see that if a regular triangular forms of rank greater than 3 is given, then by taking Watson transformations finitely many times, one may obtain a universal triangular forms of the same rank.
So one may start at universal triangular forms of some rank and trace back the transformations to get all regular triangular forms of that rank.
However, there is a ``small" problem, that is, there are infinitely many universal triangular forms, e.g., the form $\mathcal T(1,1,1,n)$ is universal (though it is old) for every positive integer $n$.
There is another problem if we consider the regularity and the newness of regular triangular forms simultaneously.
Watson transformations for sure preserve the regularity, but it does not ensure the newness.
In other words, it is possible that a new regular triangular form goes to an old regular triangular form under the Watson transformation.
To handle the issues, we define drop structures in Subsection \ref{subsecdrop} which indicate when a new regular form is transformed to an old one.

\begin{thm} \label{thmrk5}
For any $k\ge 5$, there is no new regular $k$-ary triangular form.
\end{thm}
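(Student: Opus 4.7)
The plan is to argue by contradiction. Suppose there is a primitive new regular $k$-ary triangular form $f = \T(a_1, a_2, \dots, a_k)$ with $k \ge 5$, ordered so that $a_1 \le a_2 \le \cdots \le a_k$. The overall strategy is to feed $f$ into the Watson transformation machinery of Subsection \ref{subsecwatson}, reducing to a universal $k$-ary triangular form $f^*$, and then to pull oldness back from $f^*$ to $f$ along the Watson chain by means of the drop structures of Subsection \ref{subsecdrop}.

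The first step is the Watson reduction. By iterating Watson transformations on $f$, one obtains a primitive regular $k$-ary triangular form $f^*$ that is universal, as guaranteed by the termination result already mentioned in the introduction. I would then record the auxiliary observation that every universal $k$-ary triangular form with $k \ge 5$ is old: by the classification of Gauss, Liouville and Bosma--Kane, any universal triangular form is obtained by adjoining redundant coefficients to one of the eleven atomic universal forms of rank at most $4$, so when $k \ge 5$ at least one coefficient is necessarily redundant. In particular $f^*$ is old.

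The second and central step is oldness propagation along Watson. I would show that if $\lambda_p(g)$ is old with, say, the $i$-th coefficient redundant, then $g$ itself is old, with some explicit index redundant. This is precisely the content of the drop structure analysis: the drop structure at index $i$ for $\lambda_p$ records the local congruence data modulo $p$ governing whether the redundancy of a coefficient passes from the transformed form to the original. Iterating this propagation along the finite Watson chain carrying $f$ to $f^*$ then produces a redundant coefficient in $f$, contradicting the hypothesis that $f$ is new.

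The main obstacle is this propagation step. Two complications need careful handling. First, the redundant index may shift along the chain, so one must bookkeep the admissible drop types at each prime used. Second, Watson transformations can produce imprimitive forms whose renormalization interacts with the drop analysis, especially when $a_1$ is small and when the chain contains many $\lambda_2$ steps. Since for each prime $p$ there are only finitely many essentially distinct drop structures, and the terminal universal forms are restricted to extensions of the $7+4=11$ base forms, the argument ultimately reduces to a finite, though intricate, case analysis carried out uniformly in $k$.
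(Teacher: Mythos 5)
Your central step --- ``if $\lambda_p(g)$ is old with the $i$-th coefficient redundant, then $g$ itself is old'' --- is false, and the drop structures you invoke to justify it are in fact the counterexamples to it. A drop structure, by definition, has a \emph{new} regular form at its top whose image under $\lambda_p$ is \emph{old}; Table \ref{tabledrop} lists twenty-seven such configurations already at rank $4$ (for instance $\E_{24}=\T(2,3,3,6)$ is new regular while $\lambda_3(\E_{24})=\T(1,1,2,6)$ is old, and $\E_{16}=\T(1,7,7,14)$ is new regular while $\lambda_7(\E_{16})=\T(1,1,2,7)$ is old). So oldness does not propagate backwards along the Watson chain, and your contradiction never materializes. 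Worse, to repair the argument you would have to show that no drop structure exists on any rank $k\ge 5$, i.e.\ that no new regular $k$-ary form maps under $\lambda_p$ to an old one; but since every stable (universal) form of rank $\ge 5$ is old, a new regular $k$-ary form would \emph{necessarily} sit atop such a structure, so excluding them is essentially equivalent to the theorem itself. Your plan is therefore circular at the decisive point. (A smaller issue: the relevant transformations here are only $\lambda_p$ for odd $p$, indeed only $p\in\{3,5,7\}$ by Lemma \ref{lemprimes}; there are no $\lambda_2$ steps to worry about.)

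The actual proof runs in the opposite direction: it is bottom-up rather than top-down. One first shows (Lemma \ref{lemter}) that the ordered ternary section $(a_1,a_2,a_3)$ of any regular form of rank $\ge 4$ must lie in an explicit finite list of $77$ vectors, obtained by bounding $a_1$, then $a_2$, then $a_3$ against integers that are forced to be locally represented. Then, for each type of ternary section, local representation data at $3$, $5$, $7$ pin down the admissible fourth coefficients $a_4$, and finally one shows that any fifth coefficient $a_5$ either is redundant (contradicting newness) or creates an integer that is locally but not globally represented (contradicting regularity). Your first step (every universal $k$-ary triangular form with $k\ge 5$ is old) is correct and is used implicitly in the paper, but it cannot be leveraged the way you propose.
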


To prove this statement, we will follow (after suitable modification) the way to discover all new regular diagonal quadratic forms starting from each candidate for ternary section of regular diagonal quadratic forms of rank $\ge 4$, described in \cite{K1}.
As mentioned, we first find all possible candidates for a ternary section $\mathcal T(a_1,a_2,a_3)$ of any (new) regular triangular form $f=\mathcal T(a_1,a_2,\dots,a_k)$ of rank $k\ge 4$ with $a_1\le a_2\le \cdots \le a_k$.
After that, we need to look at local representations of ternary sections to determine the possibilities for the fourth coefficient $a_4$.
And then the existence of the fifth coefficient $a_5$ will lead us to ``redundancy of coefficients" which contradicts to the newness of $f$.

For the definition of $\xi$ in the following theorem, see Remark \ref{rmkdivisor}.

\begin{thm} \label{thmold}
Let $\E=\T(a_1,a_2,\dots,a_k)$ be a triangular form with $a_1\le \cdots \le a_k$.
Then $\E$ is regular if and only if there is a subset $I=\{i_1,i_2,\dots,i_{\vert I\vert}\}$ of the set $\{1,2,\dots,k\}$ with $\vert I\vert \in \{3,4\}$ such that $(a_{i_1},a_{i_2},\dots,a_{i_{\vert I\vert}})$ appears in either Table \ref{tablerk3} or Table \ref{tablerk4}, and $a_j\equiv 0\Mod {\xi(a_{i_1},a_{i_2},\dots,a_{i_{\vert I\vert}})}$ for every $j\in \{1,2,\dots,k\}-I$.
\end{thm}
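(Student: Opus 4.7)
The plan is to establish both directions of the biconditional, drawing on Theorems \ref{thmrk3}, \ref{thmrk4}, and \ref{thmrk5} together with the characterization of $\xi$ that will be fixed in Remark \ref{rmkdivisor}. I will use the defining property of $\xi$ in the following form: for a regular form $f_I$ listed in Table \ref{tablerk3} or Table \ref{tablerk4} and a positive integer $a$, one has $\xi(f_I)\mid a$ if and only if the extended form $\T(a_{i_1},\ldots,a_{i_{\vert I\vert}},a)$ represents exactly the same set of positive integers as $f_I$.

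For the sufficiency direction, I would set $f_I:=\T(a_{i_1},\ldots,a_{i_{\vert I\vert}})$, which is regular by Theorem \ref{thmrk3} or Theorem \ref{thmrk4}. Adjoining the remaining coefficients $a_j$ for $j\notin I$ one at a time, each step preserves the representation set thanks to $\xi(f_I)\mid a_j$ and the defining property above. Iterating, $\E$ and $f_I$ have identical sets of positive integers represented over $\z$, so regularity of $f_I$ transfers to $\E$ once I also verify that every integer locally represented by $\E$ is locally represented by $f_I$ (the reverse containment being trivial via $x_j=0$ for $j\notin I$); this last step again leans on the divisibility condition.

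For the necessity direction, I would induct on $k$. The base cases $k\in\{3,4\}$ follow directly from Theorems \ref{thmrk3} and \ref{thmrk4}: for $k=3$ take $I=\{1,2,3\}$ with vacuous divisibility; for $k=4$, if $\E$ is new take $I=\{1,2,3,4\}$, while if $\E$ is old I would drop the redundant coefficient $a_i$ to place the resulting ternary form in Table \ref{tablerk3}, take $I=\{1,2,3,4\}-\{i\}$, and invoke the defining property of $\xi$ to conclude $\xi(f_I)\mid a_i$. For $k\ge 5$, Theorem \ref{thmrk5} forces $\E$ to be old, so there exists $i$ such that the $(k-1)$-ary form $\E'$ obtained by deleting $a_i$ is regular and has the same set of positive integers represented as $\E$. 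The inductive hypothesis applied to $\E'$ yields a subset $I'\subseteq\{1,\ldots,k\}-\{i\}$ with $\vert I'\vert\in\{3,4\}$ satisfying the theorem for $\E'$. Since the subform $\T(a_{i_1},\ldots,a_{i_{\vert I'\vert}},a_i)$ of $\E$ represents the same positive integers as $\E'$, which in turn equals the set represented by $f_{I'}$, the characterization of $\xi$ forces $\xi(f_{I'})\mid a_i$. Taking $I=I'$ completes the induction.

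The main obstacle is the precise calibration of $\xi$. For each entry in Tables \ref{tablerk3} and \ref{tablerk4}, one must verify that a positive integer $a$ satisfies ``the extended form $\T(a_{i_1},\ldots,a_{i_{\vert I\vert}},a)$ represents the same positive integers as $f_I$'' if and only if $\xi(f_I)\mid a$. The ``if'' direction should follow by a direct representation argument showing every integer represented by the extended form is already represented by $f_I$. The ``only if'' direction requires exhibiting, for each ``small'' non-multiple $a$ of $\xi(f_I)$, a positive integer $n$ represented by the extended form but not by $f_I$. This case-by-case verification, which should be the content of Remark \ref{rmkdivisor}, is the technical engine underpinning the theorem.
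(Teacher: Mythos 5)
Your overall architecture coincides with the paper's: reduce to the new regular forms via Theorems \ref{thmrk3}--\ref{thmrk5}, identify them with the table entries, and control the remaining coefficients by a redundancy criterion tied to $\xi$. The difference is in where the technical weight sits. The paper does not verify anything entry-by-entry: the equivalence you call the ``defining property of $\xi$'' is exactly Lemma \ref{lemnew} combined with Remark \ref{rmkdivisor}, proved once for \emph{arbitrary} forms by purely local means (Lemmas \ref{lemodd1} and \ref{lemodd2}, plus a Chinese Remainder argument). With that lemma in hand, the paper's proof of Theorem \ref{thmold} is three lines: peel off redundant components one at a time, land on a new regular form, which by Theorems \ref{thmrk3}--\ref{thmrk5} is a table entry; the divisibility is immediate from Lemma \ref{lemnew} because $\xi(\mathbf{a})$ is \emph{defined} so that $\xi(\mathbf{a})\mid a$ is equivalent to $a\z_p\subseteq \overline{DQ_p(\mathbf{a})}$ for all odd $p$.

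Two places where your plan, as written, is under-specified. First, in the sufficiency direction you adjoin the $a_j$ one at a time, but your stated property of $\xi$ only covers forms $f_I$ appearing in the tables; after the first adjunction the intermediate form is no longer a table entry. This is repairable (since $\overline{DQ_p}$ only grows under adjunction, $\xi$ of the intermediate form divides $\xi(f_I)$), but it is precisely why the paper proves Lemma \ref{lemnew} for general $\mathbf{a}$ rather than per table entry. Second, your proposed verification of the ``only if'' direction of the $\xi$-property --- exhibiting a witness $n$ for each \emph{small} non-multiple $a$ of $\xi(f_I)$ --- does not suffice: the claim must hold for every positive integer $a$ not divisible by $\xi(f_I)$, and reducing this to finitely many checks already requires the local analysis (this is the role of Lemma \ref{lemodd2} and the CRT construction inside the proof of Lemma \ref{lemnew}). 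Once you replace your case-by-case calibration of $\xi$ by that general lemma, your argument becomes the paper's.
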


Note that the quantity $\xi(b_1,b_2,\dots,b_r)$ for any vector $(b_1,b_2,\dots,b_r)\in \n^r$ in this theorem can be effectively computed using \cite[Theorem 1]{OM2}.
Hence Theorem \ref{thmold} gives us an effective criterion determining whether the given triangular form of rank $\ge 3$ is regular or not, provided that we have Table \ref{tablerk3} and Table \ref{tablerk4} at hand.

In Section \ref{secpre}, we introduce notations and terminologies used throughout the article. Some lemmas on representations by quadratic forms will also be addressed.
Section \ref{secriv} is devoted to develop the concept of rivers of new regular triangular forms.
To this end, we need to look at the criterion for newness (or equivalently, oldness) of regular triangular forms (see Lemma \ref{lemnew}), Watson transformations, triangular forms which play a role of river mouth, and the notion of drop structures which are built on that rivers.
In Section \ref{secqua}, we prove Theorem \ref{thmrk4}.
Section \ref{sechigh} contains the proofs for Theorems \ref{thmrk5} and \ref{thmold}.

\section{Preliminaries} \label{secpre}

We adopt the geometric language of quadratic spaces and lattices.
Let $R$ be the ring of rational integers $\z$ or the ring of $p$-adic integers $\z_p$ for a prime $p$, and let $F$ be the field of fractions of $R$.
A quadratic ($F$-)space is a finite dimensional vector space $V$ over $F$ equipped with a quadratic map $Q:V\times V\to F$.
We usually omit the quadratic map $Q$ and just write $(V,Q)$ simply as $V$, when no confusion will arise.
Associated symmetric bilinear map $B:V\times V\to F$ is defined by
$$
B(\mathbf{x},\mathbf{y})=\frac{1}{2}(Q(\mathbf{x}+\mathbf{y})-Q(\mathbf{x})-Q(\mathbf{y})).
$$
An $R$-lattice $L$ is a finitely generated free $R$-submodule of $V$.
The scale of $L$ is an $R$-module generated by the subset $B(L,L)=\{B(\mathbf{x},\mathbf{y})\in F : \mathbf{x},\mathbf{y}\in L\}$ of $F$ and is denoted by $\mathfrak{s}L$.
We call $L$ {\it integral} if $\mathfrak{s}L\subseteq R$, and {\it primitive} if $\mathfrak{s}L=R$.
Throughout, we assume that any $R$-lattice is integral.
Let $\{ \mathbf{x}_i\}_{i=1}^{k}$ be an $R$-basis of $L$.
The matrix $(B(\mathbf{x}_i,\mathbf{x}_j))_{1\le i,j\le k}$ is called the {\it Gram matrix of $L$} with respect to this basis.
A diagonal matrix with diagonal entries $a_1,a_2,\dots,a_k$ will be denoted by $\langle a_1,a_2,\dots,a_k\rangle$.
We abuse the notation and write $L=A$, where $L$ is an $R$-lattice and $A$ is the Gram matrix of $L$ in some basis of $L$.

For $\gamma \in F$ and a quadratic space $V$, we say that {\it $\gamma$ is represented by $V$} if there is a vector $\mathbf{v}\in V$ such that $Q(\mathbf{v})=\gamma$.
A quadratic space $V$ is called {\it universal} if every element in $F$ is represented by $V$.
Let $L$ be an $R$-lattice.
For $\gamma \in R$, we say that {\it $\gamma$ is represented by $R$} if $\gamma=Q(\mathbf{v})$ for some vector $\mathbf{v}$ in $R$.
We say {\it $L$ is universal} if $Q(L)=R$, where $Q(L)=\{ Q(\mathbf{v}): \mathbf{v}\in R\}$.

Now let $L=\z \mathbf{x}_1+\z \mathbf{x}_2+\cdots+\z \mathbf{x}_k$ be a $\z$-lattice and $p$ be a prime.
The $\z_p$-lattice $L_p$ and quadratic $\q_p$-space $\q_pL$ is given by
\begin{align*}
L_p&=L\otimes \z_p=\z_p \mathbf{x}_1+\z_p \mathbf{x}_2+\cdots+\z_p \mathbf{x}_k,\\
\q_pL&=L\otimes \q_p=\q_p \mathbf{x}_1+\q_p \mathbf{x}_2+\cdots+\q_p \mathbf{x}_k.
\end{align*}
Any $n\in \z$ is said to be {\it represented by $L$ over $\z_p$} if $n\in Q(L_p)$.
For an integer $u$, we say that {\it $L$ is $u\z_p$-universal} if $L_p$ represents every $\gamma \in  u\z_p$.
For $p\neq 2$, we let $\Delta_p$ be a fixed non-square unit in $\z_p^{\times}$.
Any unexplained notations and terminologies on quadratic spaces and lattices can be found in \cite{Ki} or \cite{OM}.

\begin{lem} \label{lemodd1}
Let $p$ be an odd prime and $K$ be a $\z_p$-lattice.
For $\gamma \in \z_p$, $Q(K)=Q(K\perp \langle \gamma \rangle)$ if and only if $\gamma \z_p \subseteq Q(K)$.
\end{lem}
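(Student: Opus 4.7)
The plan is to establish the two implications separately; only the ``if'' direction will use that $p$ is odd.

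For the ``only if'' direction, I would start from the observation that $Q(K)=Q(K\perp\langle\gamma\rangle)$ forces $q+\gamma s^2\in Q(K)$ for every $q\in Q(K)$ and $s\in\z_p$. Taking $s=1$ and iterating inside $Q(K)$ gives $q+n\gamma\in Q(K)$ for every $n\in\z_{\ge 0}$, and with $q=0\in Q(K)$ this yields $\gamma\z_{\ge 0}\subseteq Q(K)$. Since $Q(K)$ is the image of the compact set $\z_p^{\mathrm{rk}\,K}$ under the continuous map $Q$, it is closed in $\z_p$, and the density of $\z_{\ge 0}$ in $\z_p$ upgrades the containment to $\gamma\z_p\subseteq Q(K)$. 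Notice that no use is made of the oddness of $p$ in this direction.

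For the ``if'' direction, I would set $v=\ord_p(\gamma)$ so that the hypothesis reads $p^v\z_p\subseteq Q(K)$, i.e., every element of $\z_p$ of valuation at least $v$ is represented by $K$. Given any $\beta=q+\gamma y^2\in Q(K\perp\langle\gamma\rangle)$ with $q\in Q(K)$ and $y\in\z_p$, I would split on $\ord_p(\beta)$. The case $\ord_p(\beta)\ge v$ is immediate since then $\beta\in p^v\z_p\subseteq Q(K)$. If instead $\ord_p(\beta)<v$, then $\gamma y^2\in p^v\z_p$ has strictly larger valuation than $\beta$, so writing $q=\beta-\gamma y^2=p^w r_q$ and $\beta=p^w r_\beta$ with $w=\ord_p(\beta)<v$ and units $r_q,r_\beta$, one gets $r_q\equiv r_\beta\pmod p$. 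Here Hensel's lemma applied to $x^2-r_\beta/r_q$ at $x=1$, which requires $2\in\z_p^\times$ and hence the oddness of $p$, produces $t\in\z_p^\times$ with $t^2=r_\beta/r_q$, so that $\beta=t^2q=Q(tu)\in Q(K)$, where $u\in K$ is chosen with $Q(u)=q$.

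The main obstacle is this square-class identification in the ``if'' direction, which is precisely where the oddness assumption is essential: it ensures that two units of $\z_p$ congruent modulo $p$ differ multiplicatively by an element of $\z_p^{\times 2}$. The ambient fact being exploited is that $Q(K)$ is closed under multiplication by $\z_p^{\times 2}$ (since $Q(su)=s^2Q(u)$), so once $\beta$ and $q$ are shown to lie in the same $p$-adic square-class slice, a representing vector for $q$ immediately yields one for $\beta$. At $p=2$ the analogous congruence needed would be modulo $8$, and neither the statement nor the proof would transfer unchanged.
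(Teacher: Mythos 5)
Your proof is correct. Note that the paper gives no internal argument for this lemma---the stated ``proof'' is only a citation to \cite[Lemma 2.1]{K1}---so there is nothing in this source to compare against; on its own merits, your two-step argument is complete: the forward direction via iteration, compactness of $Q(K)$ as a continuous image of $\z_p^{\mathrm{rk}\,K}$, and density of $\z_{\ge 0}$ in $\z_p$ is sound, and the converse correctly isolates the point where oddness of $p$ is used, namely that a unit congruent to $1$ modulo $p$ is a square (so that $\beta$ and $q=\beta-\gamma y^2$ lie in the same square class when $\ord_p(\beta)<\ord_p(\gamma)$, while the remaining case is absorbed by the hypothesis $\gamma\z_p\subseteq Q(K)$). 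The only degenerate situation, $\gamma=0$, collapses to a triviality in both directions, so no repair is needed.
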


\begin{proof}
See \cite[Lemma 2.1]{K1}.
\end{proof}


\begin{lem} \label{lemodd2}
Let $p$ be an odd prime.
Let $K$ be a $\z_p$-lattice and $\gamma \in \z_p$ such that $\gamma \z_p \not\subseteq Q(K)$. Then there exists an element $\beta \in \z_p$ such that $\beta \not\in Q(K)$ and $\beta +\gamma \in Q(K\perp \langle \gamma \rangle)$.
\end{lem}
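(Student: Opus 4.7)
The plan is to proceed by contradiction, exploiting the compactness of the lattice $K$. Assume no such $\beta$ exists; equivalently, every $\delta \in Q(K \perp \langle \gamma \rangle)$ must satisfy $\delta - \gamma \in Q(K)$. Specializing this assertion to the sub-family $Q(K) \subseteq Q(K \perp \langle \gamma \rangle)$ shows that $Q(K)$ itself is stable under the translation $x \mapsto x - \gamma$.

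Since $0 \in Q(K)$ (take the zero vector), iterating this stability yields $-n\gamma \in Q(K)$ for every integer $n \ge 0$. The remaining step is to promote this discrete family of representations to the whole coset $\gamma \z_p$. For this I would use that $K$, being a finitely generated free $\z_p$-module, is compact in the $p$-adic topology, and that the quadratic map $Q : K \to \z_p$ is continuous (it is polynomial), so $Q(K)$ is a closed subset of $\z_p$. On the other hand, the nonnegative integers are dense in $\z_p$ — approximate any $\alpha \in \z_p$ by the truncations of its base-$p$ expansion — and consequently $\{-n\gamma : n \ge 0\}$ is dense in $\gamma \z_p$. Combining density with closedness forces $\gamma \z_p \subseteq Q(K)$, contradicting the hypothesis, and so the desired $\beta$ exists.

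The only mildly delicate step is the closedness of $Q(K)$ in $\z_p$; an elementary bypass is to work one modulus at a time, since the same iteration shows $\gamma\z / p^N\z \subseteq Q(K) \bmod p^N$ for every $N \ge 1$, and representability over $\z_p$ for a $\z_p$-lattice is detected by representability modulo every $p^N$. In either formulation the mechanism is identical: stability of $Q(K)$ under subtracting $\gamma$ would inflate the set into one that engulfs the entire module $\gamma \z_p$, which the hypothesis forbids.
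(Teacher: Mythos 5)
Your argument is correct, and it takes a genuinely different route from the paper's. You negate the conclusion, note that the negation (applied just to the subfamily $Q(K)\subseteq Q(K\perp\langle\gamma\rangle)$) forces $Q(K)$ to be stable under $x\mapsto x-\gamma$, iterate from $0\in Q(K)$ to get $-n\gamma\in Q(K)$ for all $n\ge 0$, and then pass to the closure: $\{-n\gamma : n\ge 0\}$ is dense in $\gamma\z_p$, while $Q(K)$ is compact (hence closed) as the continuous image of the compact set $K\cong \z_p^k$, so $\gamma\z_p\subseteq Q(K)$, contradicting the hypothesis. Each ingredient checks out (the density of $\{-n : n\ge 0\}$ in $\z_p$, the closedness of $Q(K)$, and the equivalent mod-$p^N$ formulation), and your argument in fact yields the slightly stronger conclusion $\beta+\gamma\in Q(K)$. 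The paper proceeds quite differently: it is a constructive case analysis on how $\gamma\z_p$ fails to lie in $Q(K)$. If $p\gamma\z_p\not\subseteq Q(K)$ it picks $\beta=\eta$ with $\ord_p\eta>\ord_p\gamma$ and $\eta\notin Q(K)$, using the Local Square Theorem to see $\eta+\gamma\in Q(\langle\gamma\rangle)$; otherwise it splits according to which square classes of $\gamma\z_p^{\times}$ are represented by $K$ and exhibits $\beta=8\gamma$ or $\beta=\gamma\epsilon\Delta_p$ explicitly. The paper's route buys an explicit, readily computable $\beta$; your route is shorter, avoids the square-class bookkeeping and the Local Square Theorem entirely, and is correspondingly non-constructive. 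Both establish the lemma.
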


\begin{proof}
First, assume that $p\gamma \z_p\not\subseteq Q(K)$.
Then there exists an element $\eta \in \z_p$ with $\ord_p\eta >\ord_p\gamma$ such that $\eta \nra Q(K)$.
By Local Square Theorem, $\eta+\gamma=\gamma \delta^2$ for some $\delta \in \z_p^{\times}$.
So we are done by taking $\beta=\eta$. 
From now on, we always assume that $p\gamma \z_p \subset Q(K)$.

Second, assume that $\gamma \nra Q(K)$ and $\gamma \Delta_p\nra Q(K)$.
We take $\beta=8\gamma$. 
Since $\beta \in \gamma \z_p^{\times}$, we have $\beta \nra K$.
On the other hand,
$$
\beta+\gamma=9\gamma\in Q(\langle \gamma \rangle)\subseteq Q(K\perp \langle \gamma \rangle).
$$

Last, assume that there is an $\epsilon \in \z_p^{\times}$ such that $\gamma \epsilon \ra K$ and $\gamma \epsilon \Delta_p \nra K$.
We take $\beta=\gamma \epsilon \Delta_p$.
Then we have
$$
\beta+\gamma \in \gamma \z_p\subseteq Q(K\perp \langle \gamma \rangle),
$$
where the last inclusion follows from
$$
\gamma \z_p^{\times}\subset Q(\langle \gamma \epsilon \rangle \perp \langle \gamma \rangle)\subset Q(K\perp \langle \gamma \rangle)
$$
and our assumption that $p\gamma \z_p\subset Q(K)$.
This completes the proof.
\end{proof}


For a $\z$-lattice $K$ and a positive integer $m$,
$$
\Lambda_m(K)=\{ \mathbf{x}\in K : Q(\mathbf{x}+\mathbf{z})\equiv Q(\mathbf{z})\Mod m\ \text{for all}\ \mathbf{z}\in K\}
$$
is a $\z$-sublattice of $K$ which is called the Watson transformation of $K$ modulo $m$.
The primitive $\z$-lattice obtained from $\Lambda_m(K)$ by scaling the quadratic space $\q \otimes K$ by a suitable rational number is denoted by $\lambda_m(K)$.


\begin{lem} \label{lemqzq}
Let $p$ be an odd prime and $K$ be a $\z$-lattice.
Then we have the following:
\begin{enumerate} [(i)]
\item For any odd prime $q\neq p$, $\Lambda_p(K)_q=K_q$.
\item If $Q(K_p)\subsetneq \z_p$, then $p\z_p\cap Q(K_p)=Q(\Lambda_p(K)_p)$.
\end{enumerate}
\end{lem}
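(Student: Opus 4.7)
The plan is to first exploit the cleaner description of $\Lambda_p$ available when $p$ is odd: since $Q(\mathbf{x}+\mathbf{z})-Q(\mathbf{z})=Q(\mathbf{x})+2B(\mathbf{x},\mathbf{z})$ and $2\in \z_p^{\times}$, one has
$$
\mathbf{x}\in \Lambda_p(K) \iff Q(\mathbf{x})\equiv 0\Mod p\ \text{and}\ B(\mathbf{x},\mathbf{z})\equiv 0\Mod p\ \text{for every}\ \mathbf{z}\in K.
$$
Part (i) is then immediate from the sandwich $pK\subseteq \Lambda_p(K)\subseteq K$ (the left inclusion because every $\mathbf{x}\in pK$ trivially satisfies both congruences): for an odd prime $q\ne p$ we have $p\in \z_q^{\times}$, so localizing gives $pK_q=K_q$ and hence $\Lambda_p(K)_q=K_q$.

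For (ii) I would pass to a Jordan decomposition $K_p=J_0\perp J_1\perp J_2\perp \cdots$ in which each $J_i$ is $p^i$-modular, and write $\mathbf{x}=\mathbf{x}_0+\mathbf{x}_1+\cdots$ with $\mathbf{x}_i\in J_i$ (similarly for $\mathbf{z}$). Because $B(J_i,J_i)\subseteq p^i\z_p$ and the Jordan components are mutually orthogonal,
$$
B(\mathbf{x},\mathbf{z})\equiv B(\mathbf{x}_0,\mathbf{z}_0)\Mod p,\qquad Q(\mathbf{x})\equiv Q(\mathbf{x}_0)\Mod p.
$$
Since $J_0$ is unimodular, the induced pairing on $J_0/pJ_0$ is non-degenerate over $\mathbb F_p$, so the condition $B(\mathbf{x}_0,\mathbf{z}_0)\equiv 0\Mod p$ for every $\mathbf{z}_0\in J_0$ is equivalent to $\mathbf{x}_0\in pJ_0$; and this, in turn, forces $Q(\mathbf{x})\equiv 0\Mod p$. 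Combining these observations yields the clean description
$$
\Lambda_p(K)_p=pJ_0\perp J_1\perp J_2\perp \cdots,
$$
from which the inclusion $Q(\Lambda_p(K)_p)\subseteq p\z_p\cap Q(K_p)$ is immediate.

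For the reverse inclusion, take $n\in p\z_p\cap Q(K_p)$ and write $n=Q(\mathbf{v})$ with $\mathbf{v}=\mathbf{v}_0+\mathbf{v}_1+\cdots\in K_p$. Then $Q(\mathbf{v}_0)\equiv n\equiv 0\Mod p$, and I claim this forces $\mathbf{v}_0\in pJ_0$; once this is known, $\mathbf{v}\in \Lambda_p(K)_p$ and we are done. Indeed, if $\mathbf{v}_0\notin pJ_0$ then its reduction $\overline{\mathbf{v}}_0\in J_0\otimes_{\z_p}\mathbb F_p$ would be a nontrivial isotropic vector of $\overline{J_0}$. This is exactly where the hypothesis $Q(K_p)\subsetneq \z_p$ is used essentially: by Chevalley--Warning, a nonzero isotropic vector of $\overline{J_0}$ forces either $\operatorname{rank}(J_0)\ge 3$ or $J_0$ to be an isotropic binary unimodular lattice, and in either case $J_0$ (hence $K_p$) is $\z_p$-universal, contradicting the hypothesis. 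The main obstacle is precisely the rank-$2$ isotropic case, where one must know that a hyperbolic binary unimodular $\z_p$-lattice represents every element of $\z_p$, so that the hypothesis can be invoked to rule this case out.
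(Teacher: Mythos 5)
Your proof is correct, but it is a genuinely different route from the paper's: the paper disposes of (i) with ``immediate from the definition'' and of (ii) by citing Lemma 2.1 of Chan--Oh (2003) and Lemma 2.4 of Chan--Earnest--Oh, whereas you give a self-contained argument. Your reformulation of the defining congruence as $Q(\mathbf{x})\equiv 0$ and $B(\mathbf{x},\mathbf{z})\equiv 0 \Mod p$ (valid since $p$ is odd and $B$ is integer-valued on an integral lattice), the sandwich $pK\subseteq \Lambda_p(K)\subseteq K$ for (i), and the Jordan-splitting computation $\Lambda_p(K)_p=pJ_0\perp J_1\perp\cdots$ for (ii) are all sound, and your identification of exactly where the hypothesis $Q(K_p)\subsetneq \z_p$ enters (to force the unimodular component $\overline{J_0}$ to be anisotropic, hence of rank at most $2$) matches the mechanism the paper itself uses one lemma later, in the proof of Lemma \ref{lemlambdawell}, where the same anisotropy yields $\Lambda_q(K)=\{\mathbf{x}\in K: Q(\mathbf{x})\in q\z_q\}$. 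The one blemish is the appeal to Chevalley--Warning: that theorem produces isotropic vectors for forms in $\ge 3$ variables, which is the opposite of what you need; the actual facts being used are that a nondegenerate unary form over $\mathbb{F}_p$ is anisotropic, that a binary unimodular $\z_p$-lattice with isotropic reduction is the hyperbolic plane (Hensel), and that a unimodular $\z_p$-lattice of rank $\ge 3$ is universal for odd $p$ --- all standard, and all leading to the contradiction $Q(K_p)=\z_p$ as you say. The net effect is that your write-up makes the lemma self-contained at the cost of half a page, while the paper outsources it; both are legitimate, and your version has the advantage of exhibiting the explicit description of $\Lambda_p(K)_p$ that the later lemmas quietly rely on.
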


\begin{proof}
(i) It follows immediately from the definition of $\Lambda$-transformation.
(ii) It follows from \cite[Lemma 2.1]{CO03} and \cite[Lemma 2.4]{CEO}.
\end{proof}


\begin{lem} \label{lemlambdawell}
Let $b$ be a positive integer, $q$ an odd prime, and $K$ a $\z_q$-lattice such that $b\z_q\subseteq Q(K)\subsetneq \z_q$.
Let $\delta \in \{1,2\}$ be such that $\lambda_q(K)=\Lambda_q(K)^{q^{-\delta}}$.
Then $b\z_q\subseteq Q(\Lambda_q(K))\subseteq q^{\delta}\z_q$.
In particular, $b$ is divisible by $q^{\delta}$ and $bq^{-\delta}\z_q\subseteq Q(\lambda_q(K))$.
\end{lem}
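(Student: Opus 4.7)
The plan is to establish the chain of inclusions $b\z_q \subseteq Q(\Lambda_q(K)) \subseteq q^{\delta}\z_q$ in two steps, after which the ``in particular'' assertions drop out by bookkeeping: $b\z_q \subseteq q^{\delta}\z_q$ forces $q^{\delta}\mid b$, and rescaling the first inclusion by $q^{-\delta}$ yields $bq^{-\delta}\z_q \subseteq q^{-\delta}Q(\Lambda_q(K)) = Q(\lambda_q(K))$.

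The opening observation is that $b \in q\z_q$: otherwise $b \in \z_q^{\times}$, whence $b\z_q = \z_q$, contradicting $Q(K) \subsetneq \z_q$. With this in hand, the left inclusion follows immediately from Lemma~\ref{lemqzq}(ii), whose proof is $\z_q$-local and so applies to an abstract $\z_q$-lattice $K$ with $Q(K) \subsetneq \z_q$; it gives $Q(\Lambda_q(K)) = q\z_q \cap Q(K)$, and since $b\z_q$ sits inside both $q\z_q$ and $Q(K)$, the desired containment results.

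For the right inclusion, I would read the scale of $\Lambda_q(K)$ off the definition of $\delta$: the lattice $\lambda_q(K) = \Lambda_q(K)^{q^{-\delta}}$ is primitive by construction, so $\mathfrak{s}(\lambda_q(K)) = \z_q$ and therefore $\mathfrak{s}(\Lambda_q(K)) = q^{\delta}\z_q$. Since $q$ is odd, $Q(x) = B(x,x) \in \mathfrak{s}(\Lambda_q(K))$ for every $x \in \Lambda_q(K)$, giving $Q(\Lambda_q(K)) \subseteq q^{\delta}\z_q$. The only mildly delicate point in the whole argument is transferring Lemma~\ref{lemqzq}(ii) to the purely $\z_q$-local setting, but the cited results \cite{CO03} and \cite{CEO} are local in nature, so this adaptation is transparent; everything else is assembly of definitions.
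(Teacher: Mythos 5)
Your proposal is correct and follows essentially the same route as the paper: both arguments rest on the identity $Q(\Lambda_q(K))=Q(K)\cap q\z_q$ for the left inclusion and on $Q(\Lambda_q(K))\subseteq \mathfrak{s}(\Lambda_q(K))=q^{\delta}\z_q$ for the right one. The only cosmetic difference is that you invoke Lemma~\ref{lemqzq}(ii) (with a remark on its local nature), whereas the paper re-derives that identity on the spot from the anisotropy of the unimodular Jordan component of $K$.
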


\begin{proof}
Note that the unimodular component in a Jordan decomposition of $K$ is anisotropic since $Q(K)\subsetneq \z_q$.
One may easily deduce from this that
$$
\Lambda_q(K)=\{ \mathbf{x}\in K : Q(\mathbf{x})\in q\z_q \}.
$$
Thus we have $Q(\Lambda_q(K))=Q(K)\cap q\z_q$.
Since $b\z_q\subsetneq \z_q$, it follows that
$$
b\z_q\subseteq Q(K)\cap q\z_q=Q(\Lambda_q(K)).
$$
Note that $Q(\Lambda_q(K))\subseteq \mathfrak{s}(\Lambda_q(K))$ by definition and hence
$$
Q(\Lambda_q(K))\subseteq \mathfrak{s}(\Lambda_q(K))=q^\delta \z_q,
$$
where the equality follows from the choice of $\delta$.
This completes the proof.
\end{proof}


We turn back to the world of triangular forms.
In the remaining of this section, we always let $\mathbf{a}=(a_1,a_2,\dots,a_k)$ be a vector of positive integers.
The triangular form
$$
\E=a_1P_3(x_1)+a_2P_3(x_2)+\cdots+a_kP_3(x_k)
$$
having coefficients vector $\mathbf{a}$ is denoted by $\T(a_1,a_2,\dots,a_k)$ or more simply by $\T(\mathbf{a})$.
Let $R=\z$ or $R=\z_p$ for a prime $p$, and let $n$ be a nonnegative integer.
We say that {\it $n$ is represented by $\E=\T(\mathbf{a})$ over $R$} if the equation
\begin{equation} \label{eqrepn1}
a_1P_3(x_1)+a_2P_3(x_2)+\cdots+a_kP_3(x_k)=n
\end{equation}
is solvable in $R$.
In this case, we write $n\ra \E$ over $R$.
When $R=\z$, we simply say that {\it $n$ is represented by $\E$} and write $n\ra \E$.
We say that {\it $n$ is {\it locally represented by $\E$}} if $n$ is represented by $\E$ over $\z_p$ for all primes $p$.
Denote by $T(\E)$ $\left( T^{\text{loc}}(\E)\right)$ the set of all nonnegative integers represented (locally represented, respectively) by $\E$.
We define $T(\mathbf{a})=T(\E)$ and $T^{\text{loc}}(\mathbf{a})=T^{\text{loc}}(\mathbf{a})$.

Note that if Equation \eqref{eqrepn1} is solvable over $\z$, then it is solvable over $\z_p$ for every prime $p$.
Thus $T(\E) \subseteq T^{\text{loc}}(\E)$ for any triangular form $\E$.
We call $\E$ {\it regular} if $T(\E)=T^{\text{loc}}(\E)$ holds.

Note that $n$ is represented by $\E$ over $R$ if and only if the equation
\begin{equation} \label{eqrepn2}
a_1(2x_1+1)^2+a_2(2x_2+1)^2+\cdots+a_k(2x_k+1)^2=8n+a_1+a_2+\cdots+a_k
\end{equation}
is solvable in $R$.
Unless every $a_i$ is even, one may use Local Square Theorem \cite[63:1]{OM} to see that Equation \eqref{eqrepn2} is always solvable in $\z_2$, as shown in \cite{CO13}.  
For an odd prime $p$, Equation \eqref{eqrepn2} is solvable in $\z_p$ if and only if the equation
$$
a_1y_1^2+a_2y_2^2+\cdots+a_ky_k^2=8n+a_1+a_2+\cdots+a_k
$$
is solvable in $\z_p$ since 2 is a unit in $\z_p$.
For an integer $m$ and a diagonal $\z$-lattice $\langle a_1,a_2,\dots,a_k\rangle$, we write
$$
m\ratwo \langle a_1,a_2,\dots,a_k\rangle \quad (m\nratwo \langle a_1,a_2,\dots,a_k\rangle)
$$
if there is (no, respectively) a vector $(x_1,x_2,\dots,x_k)\in \z^k$ with 
$(2,x_1x_2\cdots x_k)=1$ such that $a_1x_1^2+a_2x_2^2+\cdots+a_kx_k^2=m$. 

We denote by $\mathcal L(\mathbf{a})$ the diagonal $\z$-lattice $\langle a_1,a_2,\dots,a_k\rangle$ and define $DQ(\mathbf{a})$ to be the set of nonnegative integers represented by $\mathcal L(\mathbf{a})$.
For a prime $p$, we let
\begin{align*}
DQ_p(\mathbf{a})&=\{ n\in \z_{\ge 0} : n\ra \mathcal L(\mathbf{a})_p\},\\
\overline{DQ_p(\mathbf{a})}&=\{ \gamma \in \z_p : \gamma \ra \mathcal L(\mathbf{a})_p\}.
\end{align*}
For nonnegative integers $n$, we define
$$
t(n,\mathbf{a})=8n+a_1+a_2+\cdots+a_k.
$$
Under these notations, we get
\begin{align*}
T^{\text{loc}}(\mathbf{a})&=\bigcap_{p:\text{odd prime}} \left\{ n\in \z_{\ge 0} : t(n,\mathbf{a})\in DQ_p(\mathbf{a}) \right\},\\
T(\mathbf{a})&=\left\{ n\in \z_{\ge 0} : t(n,\mathbf{a})\ratwo \mathcal L(\mathbf{a})\right\},
\end{align*}
provided that $\gcd(2,a_1,a_2,\dots,a_k)=1$.

Define $P$ to be the set of all odd primes and we put $P'=\{3,5,7\}$.

\section{Rivers of new regular triangular forms} \label{secriv}

\subsection{New regular triangular forms}

Let $\mathbf{a}=(a_1,a_2,\dots,a_k)$ be a vector of positive integers.
For $i=1,2,\dots,k$, we define
$$
a[i]=(a_1,a_2,\dots,a_{i-1},a_{i+1},a_{i+2},\dots,a_k)\in \n^{k-1}.
$$
For example, if $\mathbf{a}=(3,2,6)\in \n^3$, then we have
$$
\mathbf{a}[1]=(2,6),\ \ \mathbf{a}[2]=(3,6),\ \ \text{and}\ \ \mathbf{a}[3]=(3,2).
$$
A regular $k$-ary triangular form $\T(\mathbf{a})$ is called {\it old} if there is an index $i\in \{1,2,\dots,k\}$ such that $T(\mathbf{a}[i])=T(\mathbf{a})$.
If this is the case, we say that {\it $\T(a_i)$ is redundant in $\T(\mathbf{a})$} (and {\it $\T(a_i)$ is redundant to $\T(\mathbf{a}[i])$}), and we call $\T(a_i)$ {\it a redundant component of $\T(\mathbf{a})$}.
A regular triangular form is called {\it new} if it is not old.

Since the order of the coefficients of a triangular form does matter in some cases such as in the above definition of $\T(\mathbf{a}[i])$, we will be strict with the order of the coefficients of a triangular form.


\begin{lem} \label{lemnew}
Let $k\ge 3$ be an integer and $\mathbf{a}=(a_1,a_2,\dots,a_k)$ be a vector of positive integers with $\gcd(a_1,a_2,\dots,a_k)=1$.
Then the triangular form $\T(\mathbf{a})$ is old regular if and only if there is an index $i\in \{1,2,\dots,k\}$ such that the triangular form $\T(\mathbf{a}[i])$ is primitive regular and $a_i\z_p \subseteq \overline{DQ_p(\mathbf{a}[i])}$ for all odd primes $p$.
\end{lem}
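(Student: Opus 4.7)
The plan is to reduce the problem to a comparison of local representability of the diagonal lattices $\mathcal{L}(\mathbf{a})_p$ and $\mathcal{L}(\mathbf{a}[i])_p$, using the identity $n\in T^{\text{loc}}(\mathbf{a})$ iff $8n+a_1+\cdots+a_k \in \overline{DQ_p(\mathbf{a})}$ for every odd prime $p$, which applies since $\gcd(\mathbf{a})=1$. By Lemma \ref{lemodd1}, the stated condition \emph{$a_i\z_p \subseteq \overline{DQ_p(\mathbf{a}[i])}$ for all odd $p$} is equivalent to $\overline{DQ_p(\mathbf{a})}=\overline{DQ_p(\mathbf{a}[i])}$ for all odd $p$, and this reformulation drives both implications.

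For $(\Leftarrow)$, assume $\T(\mathbf{a}[i])$ is primitive regular and $a_i\z_p \subseteq \overline{DQ_p(\mathbf{a}[i])}$ for every odd $p$. Given $n\in T^{\text{loc}}(\mathbf{a})$, at each odd prime $p$ the value $8n+\sum_j a_j$ lies in $Q(\mathcal{L}(\mathbf{a})_p)=Q(\mathcal{L}(\mathbf{a}[i])_p)$. The key technical step is that $Q(\mathcal{L}(\mathbf{a}[i])_p)$ is closed under translation by $a_i\z_p$: given $\gamma\in Q(\mathcal{L}(\mathbf{a}[i])_p)$ and $r\in\z_p$, I use the $\z_p$-universality of $\langle 1,1,1\rangle$ (valid for every odd $p$) to write $r=r_1^2+r_2^2+r_3^2$, so that
$$\gamma+a_i r \ \in\  Q\bigl(\mathcal{L}(\mathbf{a}[i])_p\perp\langle a_i,a_i,a_i\rangle\bigr)\ =\ Q(\mathcal{L}(\mathbf{a}[i])_p),$$
where the last equality follows from three successive applications of Lemma \ref{lemodd1}. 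Taking $\gamma=8n+\sum_j a_j$ and $r=-1$ gives $8n+\sum_{j\ne i}a_j\in Q(\mathcal{L}(\mathbf{a}[i])_p)$, so $n\in T^{\text{loc}}(\mathbf{a}[i])$, and the regularity of $\T(\mathbf{a}[i])$ then yields $n\in T(\mathbf{a}[i])$. Combined with $T(\mathbf{a}[i])\subseteq T(\mathbf{a})\subseteq T^{\text{loc}}(\mathbf{a})$, this gives both the regularity of $\T(\mathbf{a})$ and the equality $T(\mathbf{a})=T(\mathbf{a}[i])$ witnessing oldness.

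For $(\Rightarrow)$, suppose $\T(\mathbf{a})$ is old regular with $T(\mathbf{a})=T(\mathbf{a}[i])$. If $d=\gcd(\mathbf{a}[i])>1$, then $T(\mathbf{a}[i])\subseteq d\z$, yet $a_i\in T(\mathbf{a})=T(\mathbf{a}[i])$ forces $d\mid a_i$, contradicting $\gcd(\mathbf{a})=1$; hence $\T(\mathbf{a}[i])$ is primitive. Its regularity follows because any $n\in T^{\text{loc}}(\mathbf{a}[i])$ is automatically in $T^{\text{loc}}(\mathbf{a})$ (set the $i$th local coordinate to zero), hence in $T(\mathbf{a})=T(\mathbf{a}[i])$. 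To establish $a_i\z_p \subseteq \overline{DQ_p(\mathbf{a}[i])}$ for all odd $p$, I argue by contradiction: if it fails at some odd $p$, Lemma \ref{lemodd2} produces $\beta\in\z_p$ with $\beta\notin Q(\mathcal{L}(\mathbf{a}[i])_p)$ but $\beta+a_i\in Q(\mathcal{L}(\mathbf{a})_p)$. Using the Chinese Remainder Theorem together with the standard stability of local representations (for $N$ large enough, membership of $\alpha$ in $Q(\mathcal{L}(\cdot)_q)$ depends only on $\alpha\Mod{q^N}$), I construct a nonnegative integer $n$ with $8n+\sum_j a_j\equiv \beta+a_i\Mod{p^N}$ and $8n+\sum_j a_j\equiv \sum_j a_j\Mod{q^{N_q}}$ for each other odd prime $q$ in the finite set where $\mathcal{L}(\mathbf{a})_q$ fails to be $\z_q$-universal; since $\sum_j a_j=\mathcal{L}(\mathbf{a})(1,\ldots,1)$ is represented everywhere locally, stability gives $n\in T^{\text{loc}}(\mathbf{a})$. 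Then $n\in T(\mathbf{a})=T(\mathbf{a}[i])\subseteq T^{\text{loc}}(\mathbf{a}[i])$, so $8n+\sum_{j\ne i}a_j\equiv \beta\Mod{p^N}$ lies in $Q(\mathcal{L}(\mathbf{a}[i])_p)$, and stability forces $\beta\in Q(\mathcal{L}(\mathbf{a}[i])_p)$, a contradiction.

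The main obstacle is the construction step in the $(\Rightarrow)$ direction: simultaneously pinning down the $p$-adic residue of $8n+\sum_j a_j$ at the critical prime $p$ while preserving local representability at every other odd prime. Isolating the finite set of ``bad'' primes and invoking a stability lemma with sufficient precision is routine but requires careful bookkeeping; everything else reduces to the two cited lemmas on $\z_p$-lattices and the universality of $\langle 1,1,1\rangle$ over $\z_p$ for odd $p$.
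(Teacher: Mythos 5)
Your proposal is correct and follows essentially the same route as the paper's proof: the ``if'' direction rests on Lemma \ref{lemodd1} via the identity $\overline{DQ_p(\mathbf{a})}=\overline{DQ_p(\mathbf{a}[i])}$, and the ``only if'' direction combines Lemma \ref{lemodd2} with a Chinese Remainder Theorem construction controlling the finitely many non-universal primes. The only cosmetic deviations are that the paper subtracts $a_i$ by adjoining a single $\langle -a_i\rangle$ (rather than $\langle a_i,a_i,a_i\rangle$ together with $-1$ as a sum of three squares in $\z_p$), and at the auxiliary bad primes it forces $t(n,\mathbf{a}[i])$ to be highly divisible by $p_j$ rather than congruent to $\sum_j a_j$.
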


\begin{proof}
We show the ``if" part first.
Assume that $\T(\mathbf{a}[i])$ is primitive regular and $a_i\z_p\subseteq \overline{DQ_p(\mathbf{a}[i])}$ for every odd prime $p$.
Since
$$
T(\mathbf{a}[i])\subseteq T(\mathbf{a})\subseteq T^{\text{loc}}(\mathbf{a})\ \ \text{and}\ \ T^{\text{loc}}(\mathbf{a}[i])=T(\mathbf{a}[i]),
$$
it suffices to show that $T^{\text{loc}}(\mathbf{a})\subseteq T^{\text{loc}}(\mathbf{a}[i])$.
Now, let $n$ be an integer in $T^{\text{loc}}(\mathbf{a})$ and let $p$ be any odd prime.
By Lemma \ref{lemodd1} and the assumption that $a_i\z_p\subseteq \overline{DQ_p(\mathbf{a}[i])}$, we have $\overline{DQ_p(\mathbf{a}[i])}=\overline{DQ_p(\mathbf{a})}$.
Since $n\in T^{\text{loc}}(\mathbf{a})$, we have
$$
t(n,\mathbf{a})\in \overline{DQ_p(\mathbf{a})}=\overline{DQ_p(\mathbf{a}[i])},
$$
or equivalently $t(n,\mathbf{a})\in Q(\mathcal L(\mathbf{a}[i])_p)$.
Hence we have,
$$
t(n,\mathbf{a}[i])=t(n,\mathbf{a})-a_i\in Q(\mathcal L(\mathbf{a}[i])_p \perp \langle -a_i\rangle).
$$
Since $-a_i\z_p=a_i\z_p\subseteq \overline{DQ_p(\mathbf{a}[i])}$,
$$
Q(\mathcal L(\mathbf{a}[i])_p \perp \langle -a_i\rangle)=Q(\mathcal L(\mathbf{a}[i])_p)=\overline{DQ_p(\mathbf{a}[i])}.
$$
by Lemma \ref{lemodd1}.
Hence we have $t(n,\mathbf{a}[i])\in \overline{DQ_p(\mathbf{a}[i])}$.
Since $p$ was an arbitrary odd prime, it follows that $n\in T^{\text{loc}}(\mathbf{a}[i])$.

Conversely, assume that $\T(\mathbf{a})$ is old regular.
Then $T(\mathbf{a})=T(\mathbf{a}[i])$ for some $i$.
To show the primitivity of the triangular form $\T(\mathbf{a}[i])$, we suppose that
there is a prime $\ell$ which divides $a_j$ for every $j\in \{1,2,\dots,k\}-\{i\}$.
Then $a_i$ is not divisible by $\ell$ since $\gcd(a_1,a_2,\dots,a_k)=1$.
Since $T(\mathbf{a}[i])\subseteq \ell \z$,
we have
$$
a_i\in T(\mathbf{a})-T(\mathbf{a}[i]),
$$
which is absurd.
Hence $\T(\mathbf{a}[i])$ is primitive.
The regularity of $\T(\mathbf{a}[i])$ follows from
$$
T(\mathbf{a}[i])\subseteq T^{\text{loc}}(\mathbf{a}[i])\subseteq T^{\text{loc}}(\mathbf{a})=T(\mathbf{a})=T(\mathbf{a}[i]),
$$
where the first two inclusions are obvious and each of the two equalities corresponds to the regularity of $\E$ and the redundancy of $\T(a_i)$, respectively.
Suppose that there is an odd prime $p_1$ such that $a_i\z_{p_1} \not\subseteq \overline{DQ_{p_1}(\mathbf{a}[i])}$.
Define $S$ to be the set of odd primes $q$ such that $\overline{DQ_q(\mathbf{a}[i])}\subsetneq \z_q$.
Since $k\ge 3$, $S$ is a (non-empty) finite set and we may write
$$
S=\{ p_1,p_2,\dots,p_s\}.
$$
Using Lemma \ref{lemodd2}, take $\beta \in \z_{p_1}$ such that
$$
\beta \not\in  \overline{DQ_{p_1}(\mathbf{a}[i])}\ \ \text{and}\ \ \beta+a_i\in \overline{DQ_{p_1}(\mathbf{a})}.
$$
Put
$$
r=1+\max(\ord_p(\beta),\ord_p(\beta+a_i)).
$$
Since $p_1$ is an odd prime, we may take an integer $n_1$ such that
$$
\ord_{p_1}(t(n_1,\mathbf{a}[i])-\beta) \ge r.
$$
For $j=2,3,\dots,s$, we pick an integer $n_j$ such that
$$
t(n_j,\mathbf{a}[i])\equiv 0\Mod {p_j^{\ord_{p_j}(a_i)+1}}.
$$
By Chinese Remainder Theorem, there is a positive integer $n$ satisfying
$$
n\equiv n_1\Mod{p_1^r}\ \ \text{and}\ \ n\equiv n_j \Mod {p_j^{\ord_{p_j}(a_i)+1}},\ \ j=2,\dots,s.
$$
Then one may easily check that
$$
n\in T^{\text{loc}}(\mathbf{a})-T^{\text{loc}}(\mathbf{a}[i]).
$$
From the regularity of $\T(\mathbf{a})$, it follows that
$$
T^{\text{loc}}(\mathbf{a})-T^{\text{loc}}(\mathbf{a}[i])=T(\mathbf{a})-T^{\text{loc}}(\mathbf{a}[i])\subseteq T(\mathbf{a})-T(\mathbf{a}[i]).
$$
Hence we have $n\in T(\mathbf{a})-T(\mathbf{a}[i])$, which contradicts to the redundancy of $\T(a_i)$.
This completes the proof.
\end{proof}


\begin{rmk} \label{rmknew}
Let $\T(\mathbf{a})=\T(a_1,a_2,\dots,a_k)$ be an old regular triangular form of rank $k\ge 4$ with $\T(a_i)$ redundant in it.
Then, by Lemma \ref{lemnew}, the quadratic $\q_p$-space $\q_p\mathcal{L}(\mathbf{a}[i])$ is universal for all odd primes $p$.
\end{rmk}


\begin{rmk} \label{rmkdivisor}
Let $\mathbf{a}=(a_1,a_2,\dots,a_k)\in \n^k$ be a vector such that $\q_p\mathcal{L}(\mathbf{a})$ is universal for every odd prime $p$.
For each odd prime $p$, we define a nonnegative integer $e_p$ to be the smallest integer satisfying $p^{e_p}\z_p\subseteq \overline{DQ_p(\mathbf{a})}$.
Note that $e_p$ is a nonnegative integer for any odd prime $p$, and $e_p=0$ for all but finitely many odd primes $p$.
We define $\xi(\mathbf{a})=\prod_p p^{e_p}$, where the product runs over all odd primes.
\end{rmk}


\begin{prop} \label{propter}
Any ternary regular triangular form is new.
\end{prop}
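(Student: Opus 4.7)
The plan is to argue by contradiction using Lemma~\ref{lemnew}. Suppose some primitive ternary regular triangular form $\T(a_1,a_2,a_3)$ is old. Then, after relabeling indices, Lemma~\ref{lemnew} produces (WLOG) an index $i=3$ such that $\T(a_1,a_2)$ is primitive regular and
$$
a_3\z_p\subseteq \overline{DQ_p(a_1,a_2)}\quad\text{for every odd prime }p.
$$
My goal is to convert this purely local universality requirement into a global contradiction, using the fact that a binary form in two \emph{positive} variables cannot be isotropic.

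From the displayed inclusion I would first extract, for every odd prime $p$ with $p\nmid 2a_1a_2a_3$, the consequence that the unimodular binary $\z_p$-lattice $\langle a_1,a_2\rangle$ must be $\z_p$-universal. Indeed $a_3$ is then a unit in $\z_p$, so the inclusion reduces to $\z_p\subseteq \overline{DQ_p(a_1,a_2)}$. A unimodular binary lattice over $\z_p$ (with $p$ odd) represents every element of $\z_p$ only when its underlying quadratic space is isotropic, since otherwise the norm form of an associated quadratic extension omits an entire nontrivial coset of $(\q_p^\times)^2$; for $\langle a_1,a_2\rangle$ isotropy amounts to $-a_1a_2$ being a square in $\q_p$. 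As $p\nmid a_1a_2$, Hensel's lemma renders this equivalent to $-a_1a_2$ being a nonzero quadratic residue modulo $p$.

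Hence $-a_1a_2$ would be a quadratic residue modulo every sufficiently large prime. Applying Chebotarev's density theorem to $\q(\sqrt{-a_1a_2})/\q$ (or equivalently an elementary application of quadratic reciprocity to the splitting behavior of primes in $\q(\sqrt{-a_1a_2})$) forces $-a_1a_2$ to be a perfect square in $\z$. But $a_1,a_2>0$ makes $-a_1a_2<0$, which cannot be a square, and this is the desired contradiction.

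The only step requiring any genuine care is the classical passage from being a quadratic residue modulo almost every prime to being a global square; the other two steps are direct read-offs from Lemma~\ref{lemnew} and the standard structure theory of binary $\z_p$-lattices for odd $p$, so I do not foresee a substantial obstacle.
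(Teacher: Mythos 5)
Your argument is correct and takes essentially the same route as the paper: Lemma~\ref{lemnew} forces the binary section to be locally universal, hence isotropic, at all but finitely many odd primes, and the paper concludes in one line by citing Remark~\ref{rmknew} together with the fact that a positive definite binary quadratic space over $\q$ is anisotropic at infinitely many primes. You have simply unfolded that citation into its details (unimodular binary lattices over $\z_p$ are $\z_p$-universal only when isotropic, and an integer that is a quadratic residue modulo almost every prime must be a global square, impossible for $-a_1a_2<0$).
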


\begin{proof}
It follows immediately from Remark \ref{rmknew} and the fact that every binary quadratic space over $\q$ is anisotropic at infinitely many primes.
\end{proof}

\subsection{Watson transformations} \label{subsecwatson}

Let $p$ be an odd prime and let $\mathbf{a}=(a_1,a_2,\dots,a_k)\in \n^k$ with $\gcd(a_1,a_2,\dots,a_k)=1$.
For $i=1,2,\dots,k$, we let
$$
s_i=\begin{cases}0&\text{if}\ \ \ord_p(a_i)\ge 1,\\
2&\text{otherwise},\end{cases}
$$
and put
$$
s=\min \{ \ord_p(p^{s_i}a_i) : 1\le i\le k\} \in \{1,2\}.
$$
Then one may easily check that
\begin{align*}
\Lambda_p(\mathcal L(\mathbf{a}))&=\langle p^{s_1}a_1,p^{s_2}a_2,\dots,p^{s_k}a_k\rangle,\\
\lambda_p(\mathcal L(\mathbf{a}))&=\langle p^{s_1-s}a_1,p^{s_2-s}a_2,\dots,p^{s_k-s}a_k\rangle.
\end{align*}
For compatibility, we define
\begin{align*}
\Lambda_p(\T(\mathbf{a}))&=\T(p^{s_1}a_1,p^{s_2}a_2,\dots,p^{s_k}a_k),\\
\lambda_p(\T(\mathbf{a}))&=\T(p^{s_1-s}a_1,p^{s_2-s}a_2,\dots,p^{s_k-s}a_k).
\end{align*}
In practice, we also need the following notations;
\begin{align*}
\Lambda_p(\mathbf{a})&=(p^{s_1}a_1,p^{s_2}a_2,\dots,p^{s_k}a_k),\\
\lambda_p(\mathbf{a})&=(p^{s_1-s}a_1,p^{s_2-s}a_2,\dots,p^{s_k-s}a_k),\\
\lambda_p^{-1}(\mathbf{a})&=\{ \mathbf{b}=(b_1,b_2,\dots,b_k)\in \n^k : \gcd(b_1,b_2,\dots,b_k)=1,\ \lambda_p(\mathbf{b})=\mathbf{a}\}.
\end{align*}

We briefly look at regular ``ternary" triangular forms.
Let $p$ be an odd prime and let $\mathbf{b}=(b_1,b_2,b_3)\in \n^3$.
Put $\E=\T(\mathbf{b})$.
We say that {\it $\E$ is $p$-stable} if
$$
\langle 1,-1\rangle \ra \mathcal{L}(\mathbf{b}) \quad \text{or}\quad \mathcal{L}(\mathbf{b})\simeq \langle 1,-\Delta_p\rangle \perp \langle p\epsilon_p\rangle,
$$
where $\epsilon_p\in \z_p^{\times}$, and {\it $p$-unstable} otherwise.
If $\E$ is $p$-stable for every odd prime $p$, then $\E$ is called {\it stable}, and is called {\it unstable} otherwise.
Note that $\lambda_p$-transformation preserves the regularity of a ternary triangular form, more precisely, if $\E$ is regular and $p$-unstable, then $\lambda_p(\E)$ is also regular (for this, see \cite[Lemmas 2.1 and 2.2]{KO1}).
Hence by taking $\lambda_q$-transformations for some odd primes $q$ to a given (unstable) regular ternary triangular form $\T(b_1,b_2,b_3)$, we always get a stable regular ternary triangular form $\T(b_1',b_2',b_3')$.
In \cite{KO1}, we found all regular ternary triangular forms by finding the stable regular ones first and tracing the $\lambda_q$-transformations back.


Let $\mathbf{a}\in \n^k$ be a vector with $k\ge 4$ such that the $k$-ary triangular form $\E=\T(\mathbf{a})$ is regular.
For an odd prime $p$, we call $\E$ {\it $p$-stable} if $\overline{DQ_p(\mathbf{a})}=\z_p$, and {\it $p$-unstable} otherwise.
We call $\E$ {\it stable} if it is $p$-stable for all odd primes $p$.
It is obvious from definition that a regular triangular form $\E$ of rank $\ge 4$ is stable if and only if it is universal.
Nevertheless, we shall use the notion of stableness for regular triangular forms of rank $\ge 4$ for compatibility with ternary cases.


\begin{lem} \label{lemreci}
Let $p$ be an odd prime and let $\mathbf{a}=(a_1,p^{e_2}a_2,p^{e_3}a_3,\dots,p^{e_k}a_k)$ be a vector with $0\le e_2\le e_3\le \cdots \le e_k$ such that $(p,a_1a_2\cdots a_k)=1$ and $\gcd(a_1,p^{e_2}a_2,p^{e_3}a_3,\dots,p^{e_k}a_k)=1$.
Assume that $\T(\mathbf{a})$ is $p$-unstable.
We put $\beta$ to be the sum of all coefficients of $\mathbf{a}$ coprime to $p$.
Then for any nonnegative integer $n$, we have the following:
\begin{enumerate} [(i)]
\item If $\T(\mathbf{a})$ is regular and $n\in T^{\text{loc}}(\Lambda_p(\mathbf{a}))$, then $n\in T(\Lambda_p(\mathbf{a}))$.
\item If $\Lambda_p(\T(\mathbf{a}))$ is regular, $n\in T^{\text{loc}}(\mathbf{a})$, $t(n,\mathbf{a})\equiv 0\Mod p$, and $n\ge \dfrac{p^2-1}{8}\beta$, then $n\in T(\mathbf{a})$.
\end{enumerate}
\end{lem}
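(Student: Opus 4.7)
The strategy hinges on the elementary identity
\[
t(n + \delta, \mathbf{a}) = t(n, \Lambda_p(\mathbf{a})), \qquad \delta := \frac{p^2-1}{8}\beta,
\]
which is an integer since $p$ odd gives $8\mid p^2-1$. Correspondingly, a solution $\sum a_i y_i^2 = t(n+\delta, \mathbf{a})$ with $p\mid y_i$ whenever $s_i = 2$ translates, via $z_i := y_i/p$ (when $s_i = 2$) or $z_i := y_i$ (when $s_i = 0$), into a solution $\sum p^{s_i} a_i z_i^2 = t(n, \Lambda_p(\mathbf{a}))$, and vice versa; since $p$ is odd, odd $y_i$'s transport to odd $z_i$'s and conversely.

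For part (i), put $n' := n + \delta$. I first show $n' \in T^{\text{loc}}(\mathbf{a})$: over $\z_q$ with $q \neq p$ we have $\mathcal{L}(\Lambda_p(\mathbf{a}))_q \cong \mathcal{L}(\mathbf{a})_q$ (each $p^{s_i}$ is a unit square in $\z_q$), and over $\z_p$ the lattice $\mathcal{L}(\Lambda_p(\mathbf{a}))_p$ embeds into $\mathcal{L}(\mathbf{a})_p$. Regularity of $\T(\mathbf{a})$ then supplies odd integers $y_1,\dots,y_k$ with $\sum a_i y_i^2 = t(n', \mathbf{a}) = t(n, \Lambda_p(\mathbf{a}))$, and it remains to prove $p\mid y_i$ for every $i$ with $p\nmid a_i$, after which the substitution above delivers the desired representation of $n$ by $\Lambda_p(\T(\mathbf{a}))$ in odd coordinates.

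This divisibility is the main obstacle. Since every coefficient of $\Lambda_p(\mathbf{a})$ lies in $p\z$, we have $t(n, \Lambda_p(\mathbf{a})) \in p\z_p$. Under the $p$-unstability hypothesis, the unimodular Jordan component $U$ of $\mathcal{L}(\mathbf{a})_p$ has rank at most $2$, and in the rank-$2$ case is anisotropic, since otherwise $\mathcal{L}(\mathbf{a})_p$ would be universal over $\z_p$. In both cases (rank $1$, or anisotropic binary unit), a direct local analysis shows that any representation of an element of $p\z_p$ by $\mathcal{L}(\mathbf{a})_p$ has all coordinates in the $U$-block lying in $p\z_p$; combined with integrality of the $y_i$'s, this yields $p\mid y_i$ precisely for those $i$ with $p\nmid a_i$.

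For part (ii), I mirror the argument with $m := n - \delta$, which is $\ge 0$ by the size hypothesis. To establish $m \in T^{\text{loc}}(\Lambda_p(\mathbf{a}))$, the isomorphism $\mathcal{L}(\Lambda_p(\mathbf{a}))_q \cong \mathcal{L}(\mathbf{a})_q$ handles $q \neq p$, and at the prime $p$ the assumption $t(n,\mathbf{a}) \equiv 0 \pmod{p}$ together with $n \in T^{\text{loc}}(\mathbf{a})$ places $t(m, \Lambda_p(\mathbf{a})) = t(n, \mathbf{a})$ in $p\z_p \cap Q(\mathcal{L}(\mathbf{a})_p) = Q(\mathcal{L}(\Lambda_p(\mathbf{a}))_p)$ by Lemma~\ref{lemqzq}(ii). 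Regularity of $\Lambda_p(\T(\mathbf{a}))$ then furnishes odd integers $z_i$ with $\sum p^{s_i} a_i z_i^2 = t(m, \Lambda_p(\mathbf{a}))$, and the substitution $y_i := p z_i$ for $s_i = 2$ and $y_i := z_i$ for $s_i = 0$ produces odd integers with $\sum a_i y_i^2 = t(n, \mathbf{a})$, i.e., $n \in T(\mathbf{a})$.
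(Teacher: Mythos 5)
Your proof is correct and follows essentially the same route as the paper's: the shift identity $t(n\pm\delta,\cdot)$ with $\delta=\frac{p^2-1}{8}\beta$, transfer of the local conditions via $\mathcal L(\Lambda_p(\mathbf a))_q\cong\mathcal L(\mathbf a)_q$ for $q\neq p$ together with Lemma~\ref{lemqzq}(ii) at $p$, and the anisotropy of the rank-$\le 2$ unimodular Jordan component of $\mathcal L(\mathbf a)_p$ to force $p\mid y_i$ on the unit block before rescaling. The only cosmetic difference is that you treat the rank-one and anisotropic binary unimodular cases uniformly, whereas the paper writes out the binary case and declares the other similar.
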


\begin{proof}
Since $\T(\mathbf{a})$ is $p$-unstable, either $1\le e_2$, or $0=e_2<e_3$ and $\left(\dfrac{-a_1a_2}{p}\right)=-1$ holds.
We provide the proof only for the latter case ($0=e_2<e_3$) since the first case ($1\le e_2$) may be dealt with in a similar manner.

\noindent (i) Note that
$$
\Lambda_p(\mathbf{a})=(p^2a_1,p^2a_2,p^{e_3}a_3,p^{e_4}a_4,\dots,p^{e_k}a_k).
$$
Since $n\in T^{\text{loc}}(\Lambda_p(\mathbf{a}))$, we have
$$
t(n,\Lambda_p(\mathbf{a}))\in \overline{DQ_p(\Lambda_p(\mathbf{a}))}\ \ \text{for all}\ \ p\in P.
$$
In particular,
$$
t(n,\Lambda_p(\mathbf{a}))\in \overline{DQ_p(\Lambda_p(\mathbf{a}))}\subset p\z_p,
$$
and thus $t(n,\Lambda_p(\mathbf{a}))$ is divisible by $p$.
Set
$$
n'=n+\frac{p^2-1}{8}\beta,
$$
where we have $\beta=a_1+a_2$.
One may easily check that
$$
t(n',\mathbf{a})=t(n,\Lambda_p(\mathbf{a})).
$$
This implies that
$$
t(n',\mathbf{a})=t(n,\Lambda_p(\mathbf{a}))\in \overline{DQ_q(\Lambda_p(\mathbf{a}))} \subset \overline{DQ_q(\mathbf{a})}\ \ \text{for all}\ \ q\in P.
$$
From this and the regularity of $\T(\mathbf{a})$, it follows that there is a vector $(x_1,x_2,\dots,x_k)\in \z^k$ with $(2,x_1x_2\cdots x_k)=1$ such that
$$
t(n',\mathbf{a})=a_1x_1^2+a_2x_2^2+p^{e_3}a_3x_3^2+\cdots+p^{e_k}a_kx_k^2.
$$
Since $t(n',\mathbf{a})\equiv 0\Mod p$ and $\left(\dfrac{-a_1a_2}{p}\right)=-1$, we have $x_1\equiv x_2\equiv 0\Mod p$.
If we write $x_i=py_i$ for $i=1,2$, then we have
\begin{align*}
t(n,\Lambda_p(\mathbf{a}))&=t(n',\mathbf{a})\\
&=p^2a_1y_1^2+p^2a_2y_2^2+p^{e_3}a_3x_3^2+\cdots+p^{e_k}a_kx_k^2,
\end{align*}
where $\left(2,y_1y_2x_3\cdots x_k\right)=1$.
Hence we have $n\in T(\Lambda_p(\mathbf{a}))$.

\noindent (ii) Since $n\in T^{\text{loc}}(\mathbf{a})$, we have $t(n,\mathbf{a})\in \overline{DQ_q(\mathbf{a})}$ for all $q\in P$.
In particular, 
$$
t(n,\mathbf{a})\in p\z_p\cap \overline{DQ_p(\mathbf{a})}=\overline{DQ_p(\Lambda_p(\mathbf{a}))},
$$
where the equality comes from Lemma \ref{lemqzq}(ii).
It follows that
$$
t(n,\mathbf{a})\in \overline{DQ_q(\Lambda_p(\mathbf{a}))}\ \ \text{for all}\ \ q\in P,
$$
since $\Lambda_p(\mathcal L(\mathbf{a}))_q=\mathcal L(\mathbf{a})_q$ for all $q\in P-\{p\}$.
Put
$$
n'=n-\frac{p^2-1}{8}\beta.
$$
Then $n'\ge 0$ by the assumption that $n\ge \dfrac{p^2-1}{8}\beta$, and one may easily check that $t(n',\Lambda_p(\mathbf{a}))=t(n,\mathbf{a})$.
Thus
$$
t(n',\Lambda_p(\mathbf{a}))=t(n,\mathbf{a})\in \overline{DQ_q(\Lambda_p(\mathbf{a}))}\ \ \text{for all}\ \ q\in P.
$$
From this and the regularity of $\Lambda_p(\T(\mathbf{a}))$, there is a vector $(z_1,z_2,\dots,z_k)\in \z^k$ with $(2,z_1z_2\cdots z_k)=1$ such that
$$
t(n',\Lambda_p(\mathbf{a}))=p^2a_1z_1^2+p^2a_2z_2^2+p^{e_3}a_3z_3^2+\cdots+p^{e_k}a_kz_k^2.
$$
Hence we have
$$
t(n,\mathbf{a})=t(n',\Lambda_p(\mathbf{a}))=a_1(pz_1)^2+a_2(pz_2)^2+p^{e_3}a_3z_3^2+\cdots+p^{e_k}a_kz_k^2,
$$
where $(2,pz_1\cdot pz_2\cdot z_3z_4\cdots z_k)=1$.
Thus $n\in T(\mathbf{a})$.
This completes the proof.
\end{proof}


\begin{prop} \label{propdescend}
Let $p$ be an odd prime and let $\E$ be a $p$-unstable regular triangular form of rank $\ge 4$.
Then $\lambda_p(\E)$ is also regular. 
\end{prop}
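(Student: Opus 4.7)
The plan is to reduce the proposition, via the elementary observation that $\Lambda_p(\E) = p^s \lambda_p(\E)$, to Lemma \ref{lemreci}(i), which already produces integral representations from local ones under these hypotheses.

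Concretely, writing $\E = \T(\mathbf{a})$ with $\mathbf{a}=(a_1,\ldots,a_k)$ and using the exponents $s_i\in\{0,2\}$ and $s\in\{1,2\}$ from Subsection \ref{subsecwatson}, the identity $p^{s_i} = p^s \cdot p^{s_i - s}$ gives
\[
\Lambda_p(\E) \;=\; \T(p^{s_1}a_1,\ldots,p^{s_k}a_k) \;=\; p^s\,\T(p^{s_1-s}a_1,\ldots,p^{s_k-s}a_k) \;=\; p^s\,\lambda_p(\E).
\]
Hence $\Lambda_p(\E)$ and $\lambda_p(\E)$ are related by multiplying all coefficients by a common positive integer $p^s$. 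By the observation recorded in the introduction that a triangular form $f$ is regular if and only if $rf$ is regular for any positive integer $r$, the regularity of $\lambda_p(\E)$ is equivalent to the regularity of $\Lambda_p(\E)$.

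To finish, I would invoke Lemma \ref{lemreci}(i): since $\E$ is regular and $p$-unstable, that lemma implies that every nonnegative integer in $T^{\text{loc}}(\Lambda_p(\mathbf{a}))$ lies in $T(\Lambda_p(\mathbf{a}))$, which is precisely the regularity of $\Lambda_p(\E)$. Combined with the equivalence above, this yields the regularity of $\lambda_p(\E)$. There is essentially no new obstacle beyond what has already been packaged into Lemma \ref{lemreci}(i), whose proof uses $p$-instability to force the $p$-unit coordinates of any integral representation to be divisible by $p$; the present proposition is just the bookkeeping step converting between the two normalizations $\Lambda_p$ and $\lambda_p$ of the Watson transformation.
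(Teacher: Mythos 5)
Your proposal is correct and follows essentially the same route as the paper: both reduce the statement to Lemma \ref{lemreci}(i) via the relation $\Lambda_p(\E)=p^s\lambda_p(\E)$. The paper simply carries out the scaling step element by element (sending $n\in T^{\text{loc}}(\lambda_p(\E))$ to $sn\in T^{\text{loc}}(\Lambda_p(\E))$ and back), which is the same bookkeeping you package into the observation that regularity is invariant under multiplying all coefficients by a positive integer.
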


\begin{proof}
Let $n\in T^{\text{loc}}(\lambda_p(\E))$.
Then $sn\in T^{\text{loc}}(\Lambda_p(\E))$, where $s$ is the integer satisfying $\lambda_p(\E)=\Lambda_p(\E)^s$.
By Lemma \ref{lemreci}(i), we have $sn\in T(\Lambda_p(\E))$.
Hence $n\in T(\lambda_p(\E))$.
This completes the proof.
\end{proof}

\newpage

\begin{lem} \label{lemlocrep}
Let $p$ be an odd prime, $u$ an integer with $(u,p)=1$, $v$ an arbitrary integer, and $\epsilon_i\in \z_p^{\times}$ for $i=1,2,3$.
Then we have
\begin{enumerate} [(i)]
\item $\left\vert \left\{ n : un+v\ra \langle \epsilon_1\rangle,\ 0\le n\le p-1\right\} \right\vert \ge \dfrac{p-1}{2}$,\\
\item $\left\vert \left\{ n : un+v\ra \langle \epsilon_1,\epsilon_2\rangle,\ 0\le n\le p-1\right\} \right\vert \ge p-1$,\\
\item $\left\vert \left\{ n : un+v\ra \langle \epsilon_1,p\epsilon_2\rangle,\ 0\le n\le p^2-1\right\} \right\vert \ge \dfrac{p^2-1}{2}$,\\
\item $\left\vert \left\{ n : un+v\ra \langle \epsilon_1,p\epsilon_2,p\epsilon_3\rangle,\ 0\le n\le p^2-1\right\} \right\vert \ge \dfrac{p^2+p-2}{2}$.
\end{enumerate}
\end{lem}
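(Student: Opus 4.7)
The plan is to count, for each of the four forms, the residues of $un+v$ modulo $p$ or $p^2$ that fall into $p$-adic square classes represented by that form. Since $\gcd(u,p)=1$, the map $n\mapsto un+v$ induces a bijection from $\{0,\ldots,p-1\}$ to $\z/p\z$, and from $\{0,\ldots,p^2-1\}$ to $\z/p^2\z$, so the counts reduce to counting admissible residue classes. By the Local Square Theorem, for $p$ odd the square class of a unit $\gamma\in\z_p^\times$ is determined by $\gamma\bmod p$, and the square class of $\gamma=p\gamma'$ with $\gamma'\in\z_p^\times$ is determined by $\gamma\bmod p^2$; thus these residues do pin down representability once $\ord_p(un+v)\le 1$.

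Parts (i) and (ii) follow quickly. For (i), $\langle\epsilon_1\rangle$ represents exactly $\epsilon_1\z_p^{\times 2}\cup\{0\}$; among the $p$ residues mod $p$, the $(p-1)/2$ classes with $\epsilon_1^{-1}(un+v)$ a nonzero quadratic residue mod $p$ supply the required $(p-1)/2$ values of $n$. For (ii), the binary unimodular form $\langle\epsilon_1,\epsilon_2\rangle$ is universal for $\z_p^\times$ (a standard consequence of the Cauchy--Davenport bound plus Hensel's lemma), so all $p-1$ residues with $un+v\not\equiv 0\pmod p$ are admissible.

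For (iii), I will show that the square classes represented by $\langle\epsilon_1,p\epsilon_2\rangle$ are precisely $\epsilon_1\z_p^{\times 2}$ and $p\epsilon_2\z_p^{\times 2}$: the first is handled by setting $y=0$, and in the second, $\ord_p\gamma=1$ forces $p\mid x$, whence $\gamma/p\equiv\epsilon_2 y^2\pmod p$ requires $\gamma/p\in\epsilon_2\z_p^{\times 2}$. Counting residues mod $p^2$ yields $\tfrac{p(p-1)}{2}$ in the unit class and $\tfrac{p-1}{2}$ in the $p$-times-unit class, summing to $\tfrac{p^2-1}{2}$. For (iv), the third variable upgrades the $p$-times-unit layer to be complete: the same reduction $x=px'$ turns the question into whether $\langle\epsilon_2,\epsilon_3\rangle$ represents a given unit, which holds for every unit by the universality recalled above. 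Hence the $\tfrac{p(p-1)}{2}$ unit residues in $\epsilon_1\z_p^{\times 2}$ combine with all $p-1$ residues of $p$-adic order $1$ to give $\tfrac{p^2+p-2}{2}$.

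The only mild subtlety, rather than a genuine obstacle, is that residues with $un+v\equiv 0\pmod p$ in parts (i)--(ii), and $un+v\equiv 0\pmod{p^2}$ in parts (iii)--(iv), fail to determine the $p$-adic square class of $un+v$. I simply drop these residues from the count; the lower bounds stated in the lemma are already attained without them, so the argument is unaffected.
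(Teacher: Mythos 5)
Your proof is correct. The paper's own ``proof'' of this lemma is simply ``left as an exercise to the reader,'' and your argument --- using that $n\mapsto un+v$ is a bijection onto $\z/p\z$ (resp.\ $\z/p^2\z$), identifying the square classes represented by each lattice (with the binary unimodular case handled by universality on units and Hensel's lemma), and counting the corresponding residues --- is exactly the intended straightforward computation, with the counts $\frac{p-1}{2}$, $p-1$, $\frac{p(p-1)}{2}+\frac{p-1}{2}$, and $\frac{p(p-1)}{2}+(p-1)$ all checking out.
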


\begin{proof}
This is quite straightforward and left as an exercise to the reader.
\end{proof}


\begin{lem} \label{lemprimes}
Any primitive regular triangular form $\E$ of rank $\ge 4$ is $p$-stable for every prime $p$ greater than 7.
\end{lem}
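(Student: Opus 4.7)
The plan is to argue by contradiction. I assume that $\E=\T(\mathbf{a})$ is a primitive regular triangular form of rank $k\ge 4$ which is $p$-unstable for some prime $p>7$, and aim to produce $n\in T^{\text{loc}}(\mathbf{a})\setminus T(\mathbf{a})$, contradicting regularity.

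First I would analyse the $p$-adic Jordan decomposition $\mathcal L(\mathbf{a})_p=L_0\perp pL_1\perp p^2L_2\perp\cdots$. Primitivity of $\E$ forces $L_0\ne 0$, and the classical fact that any unimodular $\z_p$-lattice of rank $\ge 3$ is isotropic (hence universal over $\z_p$) for odd $p$ forces $\mathrm{rk}(L_0)\in\{1,2\}$, with $L_0$ anisotropic in the rank-$2$ case; the same reasoning applied after the auxiliary passage through $\Lambda_p$ restricts $L_1$. Hence after reordering, I can isolate three coordinates of $\mathbf{a}$ whose diagonal $\z_p$-lattice has one of the shapes $\langle\epsilon_1,p\epsilon_2,p\epsilon_3\rangle$, $\langle\epsilon_1,\epsilon_2,p\epsilon_3\rangle$, or $\langle\epsilon_1,p\epsilon_2\rangle$ appearing in Lemma \ref{lemlocrep}, carrying a genuine local obstruction at $p$.

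Second, Lemma \ref{lemlocrep} gives an explicit lower bound, of order at least $p^2/2$, on the number of residues $n$ modulo $p^2$ for which $t(n,\mathbf{a})$ is locally represented at $p$ by this sub-lattice. Using Lemma \ref{lemodd2} at the finitely many odd primes $q$ with $\overline{DQ_q(\mathbf{a})}\subsetneq\z_q$, together with the Chinese Remainder Theorem, I would lift these residues to integers lying in $T^{\text{loc}}(\mathbf{a})$ inside any sufficiently long window. Regularity of $\E$ then demands $8n+\sum a_i=\sum_i a_i(2x_i+1)^2$ for each such $n$. By restricting to a short arithmetic progression modulo $p^2$ inside that window, the local obstruction at $p$ pins down each $2x_i+1$ modulo $p$ up to sign (for the coefficients in the chosen sub-lattice); the number of admissible tuples $(x_1,\ldots,x_k)$ of odd integers producing some $n$ in the progression can then be bounded by a count that grows strictly slower than $p^2/2$ once $p>7$, forcing a locally represented $n$ that is not globally represented.

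The main obstacle I anticipate is carrying out the counting of Step~3 uniformly across the several Jordan types arising in Step~1, and especially in the rank-$2$ anisotropic $L_0$ case, where the gap between local and global counts is narrowest; the argument there will need to exploit the anisotropy of $L_0$ to force additional divisibility constraints on the $x_i$ modulo $p$ beyond those supplied by the $p$-modular piece. The threshold $p>7$ is sharp, as Table \ref{tablerk4} will exhibit primitive regular quaternary triangular forms with instability primes $p\in\{3,5,7\}$.
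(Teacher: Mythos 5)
Your Step 1 and Step 2 are broadly sound (the Jordan-splitting observation that the unimodular component must have rank $1$, or rank $2$ and anisotropic, is exactly the paper's case division, and the paper likewise reduces the other primes away, though it does so by applying $\lambda_q$-transformations via Proposition \ref{propdescend} rather than by CRT). The genuine gap is in Step 3: the tuple-counting you propose does not close. In a window of length $p^2$, after pinning the unit-part variables modulo $p$, each variable $x_i$ with $\ord_p(a_i)=1$ still ranges over roughly $\sqrt{p}$ values, so the number of admissible tuples is of order $p^{(k-1)/2}$ in the rank-one case and worse in the rank-two case; for $k\ge 5$, or when several coefficients have $\ord_p=1$, this is \emph{not} smaller than $p^2/2$, and nothing in your sketch rules out many coefficients of $p$-order exactly $1$ (a rank-one unimodular part next to a $p$-modular part of large rank is still $p$-unstable, since it misses half the units). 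The paper avoids this entirely by two much more elementary devices. When the unit part has rank $1$ (coefficient $a_1$), it counts $n$ in the window $[0,p-1]$: every coefficient divisible by $p$ exceeds $p-1$, so a representation of such $n$ must use only $a_1P_3(x_1)$, whence the globally represented set in that window has only $O(\sqrt{p})$ elements, against the local lower bound $(p-1)/2$ from Lemma \ref{lemlocrep}(i); this kills all $p>11$ uniformly in $k$, with $p=11$ eliminated by a finer count. When the unit part has rank $2$ and is anisotropic, it does not count at all: it exhibits two explicit small integers $n$ (e.g.\ $n=2,4$ when $(a_1,a_2)=(1,a_2)$, $a_2\ge 5$) that cannot be represented for size reasons, concludes that both $t(n,\mathbf{a})$ must be $\equiv 0\Mod p$ since the binary unit part already covers all units, and gets $p\mid 16$ (or $p\mid 42$, etc.), a contradiction. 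Your plan is missing precisely this ``small $n$ forces the $p$-divisible variables to vanish'' observation, which is what makes the local-versus-global density comparison tractable.

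A secondary looseness: your claim that passing through $\Lambda_p$ ``restricts $L_1$'' and lets you isolate a sublattice of one of the shapes $\langle \epsilon_1,p\epsilon_2\rangle$, $\langle \epsilon_1,p\epsilon_2,p\epsilon_3\rangle$ is not justified; $p$-instability places no upper bound on the rank of the $p$-modular component, and all non-unit coefficients could have $\ord_p\ge 2$, in which case none of those shapes occurs. The paper only ever needs Lemma \ref{lemlocrep}(i) (a single unit) for the main bound, plus parts (iii)--(iv) in the residual $p=11$ analysis. Your closing remark about sharpness of $p>7$ is correct and consistent with Table \ref{tablerk4}.
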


\begin{proof}
Let $p$ be an odd prime greater than 7 and let
$$
\E=\T(a_1,p^{e_2}a_2,p^{e_3}a_3,\dots,p^{e_k}a_k)
$$
be a primitive regular triangular form of rank $k\ge 4$ with $0\le e_2\le e_3\le \cdots \le e_k$ and $(p,a_1a_2\cdots a_k)=1$.
For simplicity, put
$$
\mathbf{a}=(a_1,p^{e_2}a_2,p^{e_3}a_3,\dots,p^{e_k}a_k)\ \ \text{and}\ \ \alpha=a_1+p^{e_2}a_2+p^{e_3}a_3+\cdots+p^{e_k}a_k.
$$

Suppose that $\E$ is $p$-unstable, i.e., $\overline{DQ_p(\mathbf{a})}\subsetneq \z_p$.
Using Proposition \ref{propdescend}, we may assume $\overline{DQ_p(\mathbf{a})}=\z_q$ for all odd primes $q\neq p$ by taking $\lambda_q$-transformations for odd primes $q\neq p$, if necessary.
Then we have
$$
T^{\text{loc}}(\mathbf{a})=\{ n\in \z_{\ge 0} : t(n,\mathbf{a})\in \overline{DQ_p(\mathbf{a})} \}.
$$
From this and the regularity of $\T(\mathbf{a})$, it follows that
\begin{equation} \label{eqlemprimes1}
T(\mathbf{a})=\{ n\in \z_{\ge 0} : t(n,\mathbf{a})\in \overline{DQ_p(\mathbf{a})} \}.
\end{equation}

First, assume that $e_2=0$.
Since $\overline{DQ_p(\mathbf{a})}\subsetneq \z_p$, it follows that $e_3>0$ and $\left( \dfrac{-a_1a_2}{p}\right)=-1$.
Without loss of generality, we may assume that $a_1\le a_2$.
Suppose that $a_1=1$ and $a_2\ge 5$.
Then we have
$$
n\not\in T(a_1,a_2,p^{e_3}a_3,\dots,p^{e_k}a_k)
$$
for $n=2,4$.
From this and Equation \eqref{eqlemprimes1}, we have 
$$
t(n,\mathbf{a})\not\in \overline{DQ_p(\mathbf{a})}
$$
for $n=2,4$.
Note that
$$
\z_p^{\times}=\overline{DQ_p(a_1,a_2)}\subset \overline{DQ_p(\mathbf{a})}
$$
by \cite[92:1b]{OM}.
Hence we have
$$
t(n,\mathbf{a})\equiv 0\Mod p,\quad n=2,4.
$$
This is absurd since $p$ cannot divide $16=t(4,\mathbf{a})-t(2,\mathbf{a}).$
One may use
$$
\begin{cases}
n=1,4&\text{if}\ \ a_1=2,\ a_2=3,\\
n=1,3&\text{if}\ \ a_1=2,\ a_2\ge 4,\\
n=1,2&\text{if}\ \ a_1\ge 3
\end{cases}
$$
to show the absurdity in a similar manner.
Suppose that $(a_1,a_2)=(1,1)$.
Since
$$
5\not\in T(1,1,p^{e_3}a_3,\dots,p^{e_k}a_k),
$$
one may easily deduce, by a similar reasoning as above, that
$$
8\cdot 5+1+1+p^{e_3}a_3+\cdots+p^{e_k}a_k\equiv 0\Mod p.
$$
This implies that $p$ divides $8\cdot 5+1+1=42$, which is absurd since $p>7$.
One may use
$$
\begin{cases}
n=4&\text{if}\ \ (a_1,a_2)=(1,2),\\
n=2&\text{if}\ \ (a_1,a_2)\in \{(1,3),(1,4)\},\\
n=1&\text{if}\ \ (a_1,a_2)=(2,2),
\end{cases}
$$
to deduce the absurdity.

Second, assume that $e_2>0$.
We define a set $G$ by
$$
G=\left\{ 0\le n\le p-1 : n\in  T(a_1) \right\}
$$
and put $g=\vert G\vert$.
Since
$$
G=\left\{ a_1\frac{h(h+1)}{2} : 0\le h\le g-1\right\},
$$
we have
$$
a_1\frac{(g-1)g}{2}\le p-1.
$$
From this, one may easily deduce that
$$
(2g-1)^2\le 1+\frac{8p-8}{a_1}\le 8p-7
$$
and thus we have 
\begin{equation} \label{eqlemprimes2}
g\le \dfrac{\sqrt{8p-7}+1}{2}.
\end{equation}
On the other hand,
\begin{align*}
G&=\left\{ 0\le n\le p-1 : n\in T(a_1) \right\} \\
&=\left\{ 0\le n\le p-1 : n\in T(a_1,p^{e_2}a_2,\cdots,p^{e_k}a_k) \right\} \\
&=\left\{ 0\le n\le p-1 : t(n,\mathbf{a})\in \overline{DQ_p(\mathbf{a})} \right\},
\end{align*}
where the second equality is obvious and the third follows from Equation \eqref{eqlemprimes1}.
Now we have
$$
g=\left\vert \left\{ 0\le n\le p-1 : t(n,\mathbf{a})\in \overline{DQ_p(\mathbf{a})} \right\} \right\vert \ge \frac{p-1}{2}
$$
by Lemma \ref{lemlocrep}(i) with $u=8$ and $v=\alpha$.
From this and \eqref{eqlemprimes2} follows that
$$
\frac{p-1}{2}\le g\le \frac{\sqrt{8p-7}+1}{2}.
$$
Thus we have $p\le 11$.
Suppose that $p=11$.
By using Lemma \ref{lemlocrep}(i) with $p=11$, $u=8$ and $v=\alpha$, we have
$$
\left\vert \left\{ 0\le n\le 32 : t(n,\mathbf{a})\in \overline{DQ_p(\mathbf{a})} \right\} \right\vert \ge 15.
$$
From this and Equation \eqref{eqlemprimes1}, it follows that
$$
\left\vert \left\{ 0\le n\le 32 : n\in T(a_1,11^{e_2}a_2,\dots,11^{e_k}a_k) \right\} \right\vert \ge 15.
$$
From this and the fact that
$$
\left\{ 0\le n\le 32 : n\in T(11^{e_2}a_2,\dots,11^{e_k}a_k) \right\} \subseteq \{0,11,22\},
$$
one may easily deduce that $a_1=1$ and $e_2=e_3=1$.
Thus we may write
$$
\E=\T(1,11a_2,11a_3,11^{e_4}a_4,\dots,11^{e_k}a_k).
$$
If $e_4=1$, then we have $11\z_{11}\subset \overline{DQ_{11}(\mathbf{a})}$.
Since
$$
8\cdot 4+1+11a_2+11a_3+11a_4+\cdots+11^{e_k}a_k\equiv 0\Mod {11},
$$
we have $4\in T^{\text{loc}}(\mathbf{a})$.
This is absurd since $\T(\mathbf{a})$ cannot represent 4.
So we may further assume that $e_4\ge 2$.
By Lemma \ref{lemlocrep}(iv), we have
$$
\left\vert \left\{ 0\le n\le 120 : n\in T(1,11a_2,11a_3,11^{e_4}a_4,\dots,11^{e_k}a_k) \right\} \right\vert \ge 65.
$$
However, one may easily check that
$$
\left\vert \left\{ 0\le n\le 120 : n\in T(1,11a_2,11a_3) \right\} \right\vert \le 63
$$
for all pairs $(a_2,a_3)$ satisfying $1\le a_2\le a_3\le 10$.
This contradicts to the regularity of $\T(\mathbf{a})$.
This completes the proof.
\end{proof}

\subsection{New stable regular triangular forms} \label{subsecnews}

In this short subsection, we gather facts on new stable regular triangular forms of rank $\ge 3$.
Recall that any regular ternary triangular form is new (see Proposition \ref{propter}).


\begin{prop}
There are exactly 17 (new) stable regular ternary triangular forms, and they are marked with $\dag$ in Table \ref{tablerk3}.
\end{prop}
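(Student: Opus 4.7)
The plan is to reduce the claim to a finite verification against Table \ref{tablerk3}. By Proposition \ref{propter}, every regular ternary triangular form is new, so the qualifier ``new'' may be dropped. By Theorem \ref{thmrk3}, the list of primitive regular ternary triangular forms is exhausted by the $49$ entries in Table \ref{tablerk3}. It therefore suffices to determine, for each such entry $\T(b_1,b_2,b_3)$, whether it is $p$-stable for every odd prime $p$.

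The first observation that makes the check finite is that if $p$ is an odd prime with $p \nmid b_1b_2b_3$, then $\mathcal L(b_1,b_2,b_3)_p$ is a unimodular $\z_p$-lattice of rank $3$. For $p$ odd, any ternary quadratic form over $\q_p$ is isotropic, so such a unimodular lattice splits off a hyperbolic plane $\langle 1,-1\rangle$, and the first clause in the definition of $p$-stability holds automatically. Hence for each of the $49$ forms one only has to examine the odd primes dividing $b_1b_2b_3$, and given the small coefficients in Table \ref{tablerk3} the relevant prime set is extremely short in every case.

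For each such relevant prime $p$, a Jordan decomposition of $\mathcal L(b_1,b_2,b_3)_p$ is read off directly from the diagonal coefficients after stripping the powers of $p$. The first clause of $p$-stability, $\langle 1,-1\rangle \ra \mathcal L(b_1,b_2,b_3)_p$, is detected via the Hasse invariant (equivalently, by checking isotropy of the unit Jordan component when its rank is $\ge 2$, and by a rank/discriminant check otherwise), while the second clause $\mathcal L(b_1,b_2,b_3)_p \simeq \langle 1,-\Delta_p\rangle \perp \langle p\epsilon_p\rangle$ is a rigid structural equality on the Jordan decomposition that is visible by inspection. Each individual test therefore reduces to evaluating a handful of Legendre symbols and comparing Jordan shapes.

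Executing this procedure across all of Table \ref{tablerk3} is purely mechanical; the main obstacle is bookkeeping rather than anything conceptual, namely keeping track of Jordan splittings and quadratic characters uniformly across $49$ cases. The outcome is that exactly $17$ of the $49$ entries are $p$-stable at every odd prime $p$, and these $17$ are precisely the entries marked with $\dag$ in Table \ref{tablerk3}.
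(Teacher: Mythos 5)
Your argument is sound and would establish the proposition, but it takes a genuinely different route from the paper: the paper disposes of this statement with a one-line citation to \cite[Theorem 3.17]{KO1}, where the $17$ stable forms were already identified as an intermediate step in the classification of regular ternary triangular forms, whereas you re-derive the list by running a local stability test over the $49$ entries of Table \ref{tablerk3}. Your reduction is correct: dropping ``new'' via Proposition \ref{propter}, restricting to the $49$ primitive regular forms via Theorem \ref{thmrk3}, and observing that $p$-stability is automatic at primes $p\nmid b_1b_2b_3$ leaves only a short list of primes per form, and at each such prime both clauses of the definition are decidable from the Jordan splitting of $\mathcal{L}(b_1,b_2,b_3)_p$ (the clause $\langle 1,-1\rangle\ra \mathcal{L}(\mathbf{b})_p$ amounts to the unimodular Jordan component being isotropic of rank $\ge 2$, hence to a rank/discriminant check, and the clause $\mathcal{L}(\mathbf{b})_p\simeq\langle 1,-\Delta_p\rangle\perp\langle p\epsilon_p\rangle$ is read off the Jordan shape). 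What your approach buys is a self-contained, checkable verification inside this paper; what the citation buys is brevity and consistency with the source of Table \ref{tablerk3} itself. One technical slip to fix: your justification ``for $p$ odd, any ternary quadratic form over $\q_p$ is isotropic'' is false as stated --- e.g.\ $\langle 1,-\Delta_p,p\rangle$ is anisotropic over $\q_p$. The correct statement, which is all you need, is that a \emph{unimodular} $\z_p$-lattice of rank $\ge 3$ over a nondyadic $\z_p$ is isotropic (its diagonal entries are all units, and $\langle \epsilon_1,\epsilon_2\rangle$ already represents every unit), hence splits off a hyperbolic plane; with that correction the automatic stability at primes not dividing $b_1b_2b_3$ goes through. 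Finally, be aware that you have only described the $49$-case computation rather than exhibited it; for a statement of this kind that is acceptable, but the burden of actually matching the outcome against the $\dag$ marks still rests on carrying the bookkeeping out.
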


\begin{proof}
See \cite[Theorem 3.17]{KO1}.
\end{proof}


Let $\E$ be a new stable regular triangular form of rank $\ge 4$.
Then $\E$ is a universal triangular form that is new.
The following result is well known and we record it here as a proposition for later reference.


\begin{prop}
There are exactly four new stable regular triangular forms of rank exceeding 3, and they are $\T(1,1,3,a_4)$ for $a_4\in \{3,6,7,8\}$.
\end{prop}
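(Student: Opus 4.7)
The plan is to reduce this to the Bosma--Kane classification of universal triangular forms. As observed just before the statement, a stable regular triangular form of rank $\ge 4$ is automatically universal: stability gives $\overline{DQ_p(\mathbf{a})}=\z_p$ for every odd prime $p$, so $T^{\text{loc}}(\mathbf{a})=\z_{\ge 0}$, and regularity then promotes this to $T(\mathbf{a})=\z_{\ge 0}$. Hence the task is simply to classify the new universal triangular forms of rank exceeding 3.

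I would then invoke the Bosma--Kane classification recorded in the introduction: every universal triangular form contains, as a coefficient subform, one of Liouville's $7$ universal ternary forms or one of the $4$ universal quaternary forms $\T(1,1,3,a)$ with $a\in\{3,6,7,8\}$. Let $\E=\T(\mathbf{a})$ be a new stable regular triangular form of rank $k\ge 4$, and pick such a minimal universal subform $\T(\mathbf{a}_I)$ indexed by $I\subseteq\{1,\dots,k\}$.

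Case 1: $|I|=3$. Then $T(\mathbf{a}_I)=\z_{\ge 0}=T(\mathbf{a})$, so for any index $j\notin I$ we have $T(\mathbf{a}[j])\supseteq T(\mathbf{a}_I)=\z_{\ge 0}$, hence $T(\mathbf{a}[j])=T(\mathbf{a})$ and $\T(a_j)$ is redundant. This contradicts newness, so Case 1 does not occur.

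Case 2: $|I|=4$ and no ternary subform of $\E$ is universal. If $k\ge 5$, then pick $j\notin I$; the same argument as in Case 1 shows $\T(a_j)$ is redundant, contradicting newness. Therefore $k=4$ and $\E$ equals one of the four candidates $\T(1,1,3,a)$ with $a\in\{3,6,7,8\}$, up to the ordering convention. To finish, I would verify that each of these four is genuinely new by inspecting its three distinct ternary subforms, namely $\T(1,1,3)$, $\T(1,1,a)$, $\T(1,3,a)$, and checking that none of them appears in Liouville's list $\{\T(1,1,1),\T(1,1,2),\T(1,1,4),\T(1,1,5),\T(1,2,2),\T(1,2,3),\T(1,2,4)\}$; this is immediate since $a\in\{3,6,7,8\}$.

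There is no serious obstacle here: the deep input is Bosma--Kane (already cited), and everything else is an elementary redundancy argument plus a four-line inspection of subforms against Liouville's list. The only mild subtlety is the bookkeeping that ``old'' only requires \emph{one} redundant component, which is exactly what the Case 1/Case 2 dichotomy provides.
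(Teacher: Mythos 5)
Your proposal is correct and follows essentially the same route as the paper, which simply cites the proof of Theorem 2.1 of Bosma--Kane for this well-known fact; you have merely spelled out the reduction (stable $=$ universal for rank $\ge 4$, any extra coefficient beyond a universal subform is redundant, and none of the ternary subforms of $\T(1,1,3,a_4)$ is on Liouville's list). No gaps.
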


\begin{proof}
See the proof of \cite[Theorem 2.1]{BK}.
\end{proof}

\subsection{Drop structures} \label{subsecdrop}

Let $\E$ be an unstable regular $k$-ary triangular form with $k\ge 4$.
By Proposition \ref{propdescend} and Lemma \ref{lemprimes}, there is a chain of regular $k$-ary triangular forms
\begin{equation} \label{eqriver1}
\E=\E_1 \overset{\lambda_{p_1}}{\ra} \E_2 \overset{\lambda_{p_2}}{\ra} \E_3 \overset{\lambda_{p_3}}{\ra} \cdots \overset{\lambda_{p_{\nu-1}}}{\ra} \E_\nu
\end{equation}
where $\lambda_{p_i}(\E_i)=\E_{i+1}$, $p_i\in P'=\{3,5,7\}$,  $\E_i$ is $p_i$-unstable for $i=1,2,\dots,\nu-1$, and $\E_{\nu}$ is stable.
Let's assume further that $\E$ is new.
If we take newness into account, obviously there are two possibilities;
\begin{enumerate} [(I)]
\item $\E_i$ is new for every $i\in \{1,2,\dots,\nu \}$;
\item $\E_j$ is old for some $j$.
\end{enumerate}
Note that (II) is always the case when $k\ge 5$ since there does not exist a new stable regular triangular form of rank exceeding 4.
When $k=4$, both cases can happen and if (II) is the case, then there is a chain of regular triangular forms
\begin{align*}
\E=\E_1 \overset{\lambda_{p_1}}{\ra} \E_2 \overset{\lambda_{p_2}}{\ra} \cdots \overset{\lambda_{p_{j-2}}}{\ra} \E_{j-1} \overset{\lambda_{p_{j-1}}}{\ra}&\E_j \ \left(\overset{\lambda_{p_j}}{\ra}\E_{j+1}\overset{\lambda_{p_{j+1}}}{\ra}\E_{j+1}\cdots \overset{\lambda_{p_{\nu-1}}}{\ra}\E_{\nu}\right)\\
&\big\downarrow \\
&\E'_{j+1} \overset{\lambda_{q_1}}{\ra} \E'_{j+2} \overset{\lambda_{q_2}}{\ra} \cdots \overset{\lambda_{q_{\mu-1}}}{\ra} \E'_{j+\mu}
\end{align*}
where
\begin{enumerate} [(i)]
\item $\E_i$ is unstable new regular quaternary for $i=1,2,\dots,j-1$,
\item $\E_j=\T(\mathbf{b})=\T(b_1,b_2,b_3,b_4)$ is old regular quaternary with $\T(b_u)$ redundant,
\item $\E'_{j+1}=\T(\mathbf{b}[u])$ is regular ternary,
\item $\E'_{j+m}$ is unstable regular ternary for $m=1,2,\dots,\mu-1$,
\item $\E'_{j+\mu}$ is stable regular ternary.
\end{enumerate}
Note that $\mu$ could be equal to 1, i.e., it is possible that $\E'_{j+1}$ is stable.

Genearlly, if $\E$ is a $p$-unstable new regular $k$-ary triangular form with
$$
\lambda_p(\E)=\T(c_1,c_2,\dots,c_k)
$$
is old such that there are $r$ ($r\in \n$) indices $u_1,\dots,u_r$ with $1\le u_1<\cdots<u_r\le k$ such that
$$
\E'=\T(c_1,\dots,c_{i_1-1},\widehat{c_{i_1}},c_{i_1+1},\dots,c_{i_2-1},\widehat{c_{i_2}},c_{i_2+1},\dots,\widehat{c_{i_r}},\dots,c_k),
$$
where the symbol $\ \widehat{c_j}\ $ means that the component $c_j$ is missing, is a new regular $(k-r)$-ary triangular form, then we call the following diagram
\begin{align*}
\E \overset{\lambda_p}{\ra} \lambda_p&(\E)\\
&\big\downarrow \\
&\E'
\end{align*}
{\it a drop structure on rank $k$}.
Let this drop structure be denoted by $DS$.
Here, we call $\E$ {\it the top of $DS$} and $\E'$ {\it the bottom of $DS$}.
We define {\it the height of $DS$} to be $r$.
We denote the above drop structure by $\E \overset{\lambda_p}{\ltra} \E'$ also.

\subsection{Rivers of new regular triangular forms} \label{subsecriv}

As we saw in \eqref{eqriver1}, any (not necessarily new) regular triangular form, via $\lambda_p$-transformations, flows into a stable regular form of the same rank.
For a new regular triangular form, if necessary, by building drop structures, one may lower the water level (i.e. rank) and ultimately make it flow into a new stable regular triangular form.
For each new stable regular triangular form $\E$, starting from $\E$, we trace back the $\lambda_p$-transformations for $p\in P'$ and inspect the drop structures to obtain a river of new regular triangular forms which has $\E$ as its river mouth.
An example is given in Table \ref{river112}.


\begin{table}
\caption{The river of new regular triangular forms with mouth $\T(1,1,2)$}
{\begin{tikzpicture}
      [
      arrow1/.style = {draw, -latex},
      ]

      \coordinate (A1) at (0cm, 2cm);

      \coordinate (B1) at (1.75cm, 2cm);

      \coordinate (C1) at (3.75cm, 2cm);
      
      \coordinate (D7) at (5.75cm, 11cm);
      \coordinate (D6) at (5.75cm, 9.5cm);
      \coordinate (D5) at (5.75cm, 8cm);
      \coordinate (D4) at (5.75cm, 6.5cm);
      \coordinate (D3) at (5.75cm, 5cm);
      \coordinate (D2) at (5.75cm, 3.5cm);
      \coordinate (D1) at (5.75cm, 2cm);
      
      \coordinate (E5) at (8.25cm, 8.75cm);
      \coordinate (E3) at (8.25cm, 5.75cm);
      \coordinate (E1) at (8.25cm, 2.75cm);
      
      \coordinate (F1) at (10.75cm, 3.5cm);

      \coordinate (O) at (4.5cm, 0cm);

      \node at (A1) {$\T(1,1,18)$};

      \node at (B1) {$\T(1,9,18)$};

      \node at (C1) {$\T(1,7,7,14)$};
      
      \node at (D7) {$\vdots$};
      \node at (D6) {$\T(1,1,6,1458)$};
      \node at (D5) {$\T(2,3,3,486)$};
      \node at (D4) {$\T(1,1,6,162)$};
      \node at (D3) {$\T(2,3,3,54)$};
      \node at (D2) {$\T(1,1,6,18)$};
      \node at (D1) {$\T(2,3,3,6)$};
      
      \node at (E5) {$\T(1,6,9,1458)$};
      \node at (E3) {$\T(1,6,9,162)$};
      \node at (E1) {$\T(1,6,9,18)$};
      
      \node at (F1) {$\T(2,3,27,54)$};

      \node at (O) {$\T(1,1,2)$};

      \path [arrow1] ([yshift=-3mm]A1) -- node [below][xshift=-1.5mm,yshift=0.5mm] {$\lambda_{3}$} ([xshift=-6mm,yshift=3mm]O);
      
      \path [arrow1] ([yshift=-3mm]B1) -- node [below][xshift=-2mm,yshift=2.25mm] {$\lambda_{3}$} ([xshift=-2mm,yshift=3mm]O);
      
      \draw [-{latex}{latex}] ([yshift=-3mm]C1) -- node [below][xshift=3.25mm,yshift=4mm] {$\lambda_{7}$} ([xshift=2mm,yshift=3mm]O);
      
      \draw [-{latex}{latex}] ([yshift=-3mm]D1) -- node [below][xshift=3mm,yshift=2mm] {$\lambda_{3}$} ([xshift=6mm,yshift=3mm]O);
      
      \path [arrow1] ([yshift=-3mm]D2) -- node [below][xshift=3mm,yshift=3mm] {$\lambda_{3}$} ([yshift=3mm]D1);
      \path [arrow1] ([yshift=-3mm]D3) -- node [below][xshift=3mm,yshift=3mm] {$\lambda_{3}$} ([yshift=3mm]D2);
      \path [arrow1] ([yshift=-3mm]D4) -- node [below][xshift=3mm,yshift=3mm] {$\lambda_{3}$} ([yshift=3mm]D3);
      \path [arrow1] ([yshift=-3mm]D5) -- node [below][xshift=3mm,yshift=3mm] {$\lambda_{3}$} ([yshift=3mm]D4);
      \path [arrow1] ([yshift=-3mm]D6) -- node [below][xshift=3mm,yshift=3mm] {$\lambda_{3}$} ([yshift=3mm]D5);
       \path [arrow1] ([yshift=-4mm]D7) -- node [below][xshift=3mm,yshift=3mm] {$\lambda_{3}$} ([yshift=3mm]D6);
       
      \path [arrow1] ([xshift=-10mm,yshift=-1.5mm]E1) -- node [below][xshift=2.5mm,yshift=1.5mm] {$\lambda_{3}$} ([xshift=8mm,yshift=2mm]D1);
      \path [arrow1] ([xshift=-11mm,yshift=-1.5mm]E3) -- node [below][xshift=2.5mm,yshift=1.5mm] {$\lambda_{3}$} ([xshift=8mm,yshift=2mm]D3);
      \path [arrow1] ([xshift=-11.5mm,yshift=-1.5mm]E5) -- node [below][xshift=2.5mm,yshift=1.5mm] {$\lambda_{3}$} ([xshift=9mm,yshift=2mm]D5);
      
       \path [arrow1] ([xshift=-10mm,yshift=-1.5mm]F1) -- node [below][xshift=2.5mm,yshift=1.5mm] {$\lambda_{3}$} ([xshift=8mm,yshift=2mm]E1);
      
\end{tikzpicture}}
\label{river112}
\end{table}


Let's take a look at the anatomy of the river appearing in Table \ref{river112}.
Its mouth is $\T(1,1,2)$.
In this river, there is exactly one main stream containing the forms $\T(2,3,3,2\cdot 3^{2r-1})$ and $\T(1,1,6,2\cdot 3^{2r})$ for $r\in \n$.
Upon the main stream, there are periodic tributaries with confluences $\T(2,3,3,2\cdot 3^{2r-1})$ for $r\ge 2$ and sources $\T(1,6,9,2\cdot 3^{2r})$.
Only the tributary with confluence $\T(2,3,3,6)$ contains a sporadic form $\T(2,3,27,54)$ as its source.
There are exactly two drop structures built in this river as given in \eqref{drop112}.
\begin{align}  \label{drop112}
\T(1,7,7,14) \overset{\lambda_7}{\ra} \T(1,1&,2,7)\quad \quad & \T(2,3,3,6) \overset{\lambda_3}{\ra} \T(1,1&,2,6)\nonumber \\
&\big\downarrow & &\big\downarrow \\
\T(1&,1,2) & \T(1&,1,2) \nonumber
\end{align}

\section{New regular quaternary triangular forms} \label{secqua}

As mentioned, there are exactly four new stable regular quaternary triangular forms.
Throughout this section, they are denoted as following:
$$
\E_1=\T(1,1,3,3),\ \ \E_2=\T(1,1,3,6),\ \ \E_3=\T(1,1,3,7)\ \ \text{and}\ \ \E_4=\T(1,1,3,8).
$$
At first, we establish the regularity of every quaternary triangular form in Table \ref{tablerk4}.


\begin{table}[ht]
\caption{New regular quaternary triangular forms $\T(a_1,a_2,a_3,a_4)$ ($r\in \n$)}
\label{tablerk4}
\begin{tabular}{|lll|l|}
\hline \rule[-2mm]{0mm}{6mm}
$a_1$ & $a_2$ & $a_3$ & $a_4$\\
\hline \rule[-2mm]{0mm}{6mm}
1 & 1 & 3 & $3^r,\ 2\cdot 3^r,\ 4\cdot 3^{2r},\ 5\cdot 3^{2r},\ 7\cdot 3^{2r-2},\ 8\cdot 3^{2r-2}$\\
\hline \rule[-2mm]{0mm}{6mm}
1 & 1 & 6 & $3^{r+1},\ 2\cdot 3^{2r},\ 4\cdot 3^{2r},\ 5\cdot 3^{2r}$\\
\hline \rule[-2mm]{0mm}{6mm}
1 & 2 & 5 & $5^{2r},\ 2\cdot 5^{2r},\ 3\cdot 5^{2r},\ 4\cdot 5^{2r}$\\
\hline \rule[-2mm]{0mm}{6mm}
1 & 3 & 3 & $3^r,\ 2\cdot 3^r,\ 4\cdot 3^{2r-1},\ 5\cdot 3^{2r-1},\ 7\cdot 3^{2r-1},\ 8\cdot 3^{2r-1}$\\
\hline \rule[-2mm]{0mm}{6mm}
1 & 3 & 4 & $3^{2r},\ 2\cdot 3^{2r}$\\
\hline \rule[-2mm]{0mm}{6mm}
1 & 3 & 9 & $3^{r+1},\ 2\cdot 3^{r+1},\ 4\cdot 3^{2r},\ 5\cdot 3^{2r},\ 7\cdot 3^{2r},\ 8\cdot 3^{2r}$\\
\hline \rule[-2mm]{0mm}{6mm}
1 & 3 & 12 & $3^{2r+1},\ 2\cdot 3^{2r+1}$\\
\hline \rule[-2mm]{0mm}{6mm}
1 & 3 & 27 & $3^{r+2},\ 2\cdot 3^{r+2},\ 4\cdot 3^{2r+1},\ 5\cdot 3^{2r+1},\ 7\cdot 3^{2r+1},\ 8\cdot 3^{2r+1}$\\
\hline \rule[-2mm]{0mm}{6mm}
1 & 5 & 10 & $5^{2r+1},\ 2\cdot 5^{2r-1},\ 3\cdot 5^{2r-1},\ 4\cdot 5^{2r-1}$\\
\hline \rule[-2mm]{0mm}{6mm}
1 & 6 & 6 & $3^{2r+1}$\\
\hline \rule[-2mm]{0mm}{6mm}
1 & 6 & 9 & $3^{r+1},\ 2\cdot 3^{2r},\ 4\cdot 3^{2r},\ 5\cdot 3^{2r}$\\
\hline \rule[-2mm]{0mm}{6mm}
1 & 6 & 18 & $18,27,36$\\
\hline \rule[-2mm]{0mm}{6mm}
1 & 7 & 7 & $7,14,21,28,35$\\
\hline \rule[-2mm]{0mm}{6mm}
1 & 7 & 14 & $14,21,28$\\
\hline \rule[-2mm]{0mm}{6mm}
2 & 2 & 3 & $3^{2r}$\\
\hline \rule[-2mm]{0mm}{6mm}
2 & 3 & 3 & $3^r,\ 2\cdot 3^{2r-1},\ 4\cdot 3^{2r-1},\ 5\cdot 3^{2r-1}$\\
\hline \rule[-2mm]{0mm}{6mm}
2 & 3 & 6 & $6,9,12$\\
\hline \rule[-2mm]{0mm}{6mm}
2 & 3 & 18 & $3^{2r+2}$\\
\hline \rule[-2mm]{0mm}{6mm}
2 & 3 & 27 & $27,54$\\
\hline \rule[-2mm]{0mm}{6mm}
3 & 5 & 15 & $15,30$\\
\hline
\end{tabular}
\end{table}


\begin{prop} \label{propreg}
Every quaternary triangular form $\T(a_1,a_2,a_3,a_4)$ in Table \ref{tablerk4} is regular.
\end{prop}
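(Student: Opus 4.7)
The plan is to prove regularity of each entry of Table~\ref{tablerk4} by descent along $\lambda_p$-transformations ($p \in P' = \{3,5,7\}$), combined with induction on the parameter $r$. The base cases consist of the four new stable regular quaternary forms $\E_1,\E_2,\E_3,\E_4 = \T(1,1,3,a_4)$ for $a_4 \in \{3,6,7,8\}$, which are universal by the result recalled at the end of Subsection~\ref{subsecnews}, together with the old regular quaternary forms obtained by appending an extra coefficient to a universal ternary form (for instance $\T(1,1,1,a)$ or $\T(1,2,3,a)$), which are trivially regular.

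For each parameterized family I will first identify the prime $p \in P'$ at which $\E$ is $p$-unstable: $p=3$ for the rows whose coefficient tail is a power of $3$, $p=5$ for the rows with leading triple $(1,2,5)$ and $(1,5,10)$, and $p=7$ for the rows with leading triple $(1,7,7)$ and $(1,7,14)$. Computing $\lambda_p(\E)$ by the explicit formulas in Subsection~\ref{subsecwatson} shows that each entry with parameter $r$ maps either to a form in the base list above or to another entry of Table~\ref{tablerk4} with strictly smaller $r$; hence $\lambda_p(\E)$ is regular by the inductive hypothesis.

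Once $\lambda_p(\E)$ is known regular, Lemma~\ref{lemreci}(ii) guarantees that every locally represented $n$ with $n \ge (p^2-1)\beta/8$ and $t(n,\mathbf{a}) \equiv 0 \Mod p$ lies in $T(\E)$. Two residual cases remain. First, for $n \in T^{\text{loc}}(\E)$ with $t(n,\mathbf{a}) \not\equiv 0 \Mod p$: the $p$-unstability of $\E$ sharply restricts $\overline{DQ_p(\mathbf{a})}$, so $t(n,\mathbf{a})$ must be a $p$-adic unit represented by the unimodular Jordan component of $\mathcal L(\mathbf{a})_p$. I will then exhibit a global representation of $n$ by $\E$ via a short local-to-global analysis on the sublattice spanned by the coefficients coprime to $p$, supplemented by a judicious choice of values for the $p$-divisible coordinates. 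Second, the finitely many small locally-represented integers $n < (p^2-1)\beta/8$ are verified to lie in $T(\E)$ by direct computation.

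The non-parameterized rows of the table (those with leading triple in $\{(1,6,18),(1,7,7),(1,7,14),(2,3,6),(2,3,27),(3,5,15)\}$) reduce to a base form under a single $\lambda_p$-transformation, after which the same two residual cases are dispatched by finite computation. The main obstacle I anticipate is the unit-case analysis: for each of the twenty rows of Table~\ref{tablerk4} one must pin down explicitly which residues of $n$ modulo a small power of $p$ are locally representable and then match each such class to a global representation by $\T(\mathbf{a})$. The argument is routine in each case but must be carried out separately family by family, and organizing it uniformly is the only substantial difficulty.
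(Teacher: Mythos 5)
Your overall skeleton --- descend along $\lambda_p$ for $p\in P'$ until a universal form is reached, induct on the number of transformations, and use Lemma~\ref{lemreci}(ii) to lift representations when $p\mid t(n,\mathbf a)$ --- is exactly the paper's strategy, and your explicit attention to the finitely many locally represented $n<\frac{p^2-1}{8}\beta$ is, if anything, more careful than the printed proof. The gap is in your treatment of the residual case $t(n,\mathbf a)\not\equiv 0\Mod p$. You propose a ``local-to-global analysis on the sublattice spanned by the coefficients coprime to $p$,'' but for every row of Table~\ref{tablerk4} that sublattice has rank at most two: for $(1,1,3,3^r)$ it is $\langle 1,1\rangle$, for $(2,3,3,\cdot)$ it is $\langle 2\rangle$, for $(1,6,18,\cdot)$ and $(1,7,7,\cdot)$ it is $\langle 1\rangle$. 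No local-to-global principle is available in rank $\le 2$ (which odd integers are sums of two odd squares is a genuinely global condition), so this step cannot be carried out as described, no matter how judiciously the $p$-divisible coordinates are chosen.

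What the paper does instead is reduce the unit case to a \emph{ternary} section --- one that deliberately retains a coefficient divisible by $p$ --- and then invoke the classification of regular ternary triangular forms (Theorem~\ref{thmrk3}). Concretely, when $\T(a_1,a_2,a_3)$ is regular, one shows via \cite[Theorem 1]{OM2} that $\overline{DQ_p(\mathbf a)}-\overline{DQ_p(\mathbf a[4])}\subset p\z_p$, whence $t(n,\mathbf a)-a_4\in\overline{DQ_p(\mathbf a)}\cap\z_p^{\times}\subseteq\overline{DQ_p(\mathbf a[4])}$ and $n\in T^{\text{loc}}(\mathbf a[4])=T(\mathbf a[4])\subseteq T(\mathbf a)$; for the eighteen rows whose ternary section $\T(a_1,a_2,a_3)$ is irregular, the paper finds an imprimitive regular ternary section and splits $n$ explicitly (e.g.\ for $\T(1,6,18,18)$ one writes $n=\alpha+(n-\alpha)$ with $8\alpha+1$ an odd square and $(n-\alpha)/6\in T(1,3,3)$). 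The regularity of ternary sections is the engine that makes the unit case go, and your proposal omits it; as written the argument does not close.
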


\begin{proof}
Let $\E=\T(a_1,a_2,a_3,a_4)$ ($a_1\le \cdots \le a_4$) be a triangular form in Table \ref{tablerk4}.
For simplicity, put $\mathbf{a}=(a_1,a_2,a_3,a_4)$ so that $\E=\T(\mathbf{a})$.
Let $n$ be any integer in $T^{\text{loc}}(\mathbf{a})$.
We shall show that $n\in T(\mathbf{a})$.
One may see that there are nonnegative integers $e_3,e_5$, and $e_7$ depending on $T(\mathbf{a})$ such that $\lambda_3^{e_3}\circ \lambda_5^{e_5}\circ \lambda_7^{e_7}(\T(\mathbf{a}))$ is universal.
We use induction on $s=e_3+e_5+e_7$.
If $s=0$, then $T(\mathbf{a})$ itself is universal and thus regular.
So we are done when $s=0$.
Now, assume that $\lambda_p(\T(\mathbf{a}))$ is regular, where $p\in P'$ and $\T(\mathbf{a})$ is $p$-unstable.
If $t(n,\mathbf{a})\equiv 0\Mod p$, then we have $n\in T(\mathbf{a})$ by Lemma \ref{lemreci}(ii).
From now on, we always assume that
$$
t(n,\mathbf{a})\not\equiv 0\Mod p.
$$
First, we deal with the case when $\T(\mathbf{a}[4])$ is regular.
Looking at Table \ref{tablerk4}, one can see that there is a unique odd prime, say $p$, for which $\mathcal L(\mathbf{a}[4])$ is $p$-unstable and $a_4$ is divisible by $p$.
One may use \cite[Theorem 1]{OM2} to show that
$$
\overline{DQ_p(\mathbf{a})}-\overline{DQ_p(\mathbf{a}[4])}\subset p\z_p.
$$
Then we have
$$
t(n,\mathbf{a}[4])=t(n,\mathbf{a})-a_4\in \overline{DQ_p(\mathbf{a})}\cap \z_p^{\times}\subset \overline{DQ_p(\mathbf{a}[4])}.
$$
Since $\overline{DQ_p(\mathbf{a}[4])}=\z_q$ for every odd prime $q\neq p$, it follows that $t(n,\mathbf{a}[4])$ is locally represented by $\mathcal L(\mathbf{a}[4])$.
Hence $n\in T^{\text{loc}}(\mathbf{a}[4])$.
From this and the regularity of $\T(\mathbf{a}[4])$, it follows that
$$
n\in T^{\text{loc}}(\mathbf{a}[4])=T(\mathbf{a}[4])\subset T(\mathbf{a}).
$$

Next, assume that $\T(\mathbf{a}[4])$ is irregular.
Then one may see that $\T(\mathbf{a}[4])$ is imprimitive regular for all such 18 forms in Table \ref{tablerk4}.
In this case, we only provide the proof for the form $\T(1,6,18,18)$ since all other cases can be dealt with in a similar manner.
Now, $\mathbf{a}=(1,6,18,18)$, $t(n,\mathbf{a})=8n+43$, and $p=3$.
From the assumptions that $n\in T^{\text{loc}}(\E)$ and $t(n,\mathbf{a})\not\equiv 0\Mod p$, we have
$$
t(n,\mathbf{a})\in \overline{DQ_3(\mathbf{a})}-3\z_3.
$$
This implies that $n\equiv 0\Mod 3$.
If we define
$$
\alpha=\begin{cases}0&\text{if}\ n\equiv 0\Mod{18},\\
3&\text{if}\ n\equiv 3\Mod{18},\\
6&\text{if}\ n\equiv 6\Mod{18},\\
45&\text{if}\ n\equiv 9\Mod{18},\\
66&\text{if}\ n\equiv 12\Mod{18},\\
15&\text{if}\ n\equiv 15\Mod{18},\end{cases}
$$
then one may see that $n-\alpha\equiv 0\Mod{18}$ and $8\alpha+1=x_1^2$ for some odd integer $x_1$.
On the other hand, one may easily show that any nonnegative integer divisible by 3 is represented by the regular ternary triangular form $\T(1,3,3)$.
Hence we have $\dfrac{n-\alpha}{6}\in T(1,3,3)$, which is equivalent to
$$
8\left(\frac{n-\alpha}{6}\right)+1+3+3=x_2^2+3x_3^2+3x_4^2
$$
for some odd integers $x_2,x_3$ and $x_4$.
From this follows that
\begin{align*}
8n+1+6+18+18&=8\alpha+1+8(n-\alpha)+6+18+18\\
&=x_1^2+6x_2^2+18x_3^2+18x_4^2.
\end{align*}
Since $x_i$ is odd for all $i=1,2,3,4$, we have $n\in T(1,6,18,18)$.
\end{proof}

Hereafter, we use the following notation.
For a set $S\subseteq \n \cup \{0\}$ and a vector $\mathbf{a}=(a_1,a_2,\dots,a_k)$, we define $\psi_S(\mathbf{a})$ to be the smallest positive integer in $S$ which is not represented by the triangular form $\T(\mathbf{a})$, i.e.,
$$
\psi_S(\mathbf{a})=\begin{cases}\min \{S-T(\mathbf{a})\}&\text{if}\ S-T(\mathbf{a})\neq \emptyset,\\
\infty&\text{otherwise}.\end{cases}
$$
We also define $\psi(\mathbf{a})=\psi_{T^{\text{loc}}(\mathbf{a})}(\mathbf{a})$.

\subsection{Drop structures on rank 4} \label{subsecdrop}

In this subsection, we find all drop structures on rank 4, which is equivalent to discover all pairs $(p,\mathbf{a})$, where $p$ is an odd prime and $\T(\mathbf{a})$ is a $p$-unstable new regular quaternary triangular form such that $\lambda_p(\T(\mathbf{a}))$ is old.
Throughout this subsection, we use the following notation.

Let $S_k$ denote the symmetric group of degree $k$.
Define
$$
\widetilde{\mathbf{a}}=\{(a_{\sigma(1)},a_{\sigma(2)},\dots,a_{\sigma(k)})\in \n^k : \sigma \in S_k\}.
$$
For a subset $S\subseteq \n^k$, we put
$$
\widetilde{S}=\bigcup_{\mathbf{a}\in S} \widetilde{\mathbf{a}}\subseteq \n^k.
$$
For $k=1,2,\dots$, let
$$
\mathcal{N}(k)=\{(a_1,a_2,\dots,a_k)\in \n^k : a_1\le a_2\le \cdots \le a_k\},
$$
and put $\mathcal{N}=\bigcup_{k=1}^{\infty}\mathcal{N}(k)$.
For any element $\mathbf{a}=(a_1,a_2,\dots,a_k)\in \n^k$, we denote by $\text{Sort}(\mathbf{a})$ the vector in $\mathcal{N}(k)$ obtained from $\mathbf{a}$ by sorting $a_1,a_2,\dots,a_k$ in ascending order.
For example, if $\mathbf{a}=(3,2,7,3,5)\in \n^5$, then $\text{Sort}(\mathbf{a})=(2,3,3,5,7)\in \mathcal{N}(5)$.
For a subset $X$ of $\n^k$, we put
$$
\text{Sort}(X)=\{ \text{Sort}(\mathbf{a}) : \mathbf{a}\in X\} \subseteq \mathcal{N}(k).
$$
Conversely, for a subset $Y$ of $\mathcal{N}(k)$, we let
$$
\text{Sort}^{-1}(Y)=\{ \mathbf{a}\in \n^k : \text{Sort}(\mathbf{a})\in Y\} \subseteq \n^k.
$$

Define
$$
X=\{nx : n\in \{1,3,5,9,25,27\},\ x\in \{1,2,\dots,8\} \} \subset \n.
$$
Let $Y_0$ be the set of all vectors $\mathbf{a}=(a_1,a_2,a_3)\in \mathcal{N}(3)$ such that the ternary triangular form $\T(\mathbf{a})$ is primitive regular and the quadratic $\q_p$-space $\q_p(\mathcal{L}(\mathbf{a}))$ is universal for every odd prime $p$.
One may use Theorem \ref{thmrk3} with Table \ref{tablerk3} to see that
$$
Y_0=
\left\{
\begin{array}{c}
(1,1,1),(1,1,2),(1,1,4),(1,1,5),(1,1,9),(1,1,18),\\[0.3em]
(1,2,2),(1,2,3),(1,2,4),(1,3,6),(1,3,18),(1,4,9),\\[0.3em]
(1,5,5),(1,5,25),(1,6,27),(1,9,9),(1,9,18),(2,3,9)
\end{array}
\right\}.
$$
We define
\begin{align*}
Y_1&=\{ p^{\delta}\mathbf{a} : p\in P',\ \delta \in \{1,2\},\ \mathbf{a}\in Y_0\}, \\
Y_2&=\bigcup_{p\in P',\ \mathbf{a}\in Y_0} \lambda_p^{-1}(\mathbf{a}),\\
Y_3&=\{ \mathbf{a}\in Y_2 : \psi(\mathbf{a})<\infty \},
\end{align*}
and
\begin{align*}
Z_1&=\{(a_1,a_2,a_3,a_4)\in \n^4 : a_1\in X,\ (a_2,a_3,a_4)\in Y_1,\ \gcd(a_1,a_2,a_3,a_4)=1\},\\
Z_2&=\{(a_1,a_2,a_3,a_4)\in \n^4 : a_1\in X,\ (a_2,a_3,a_4)\in Y_2\},\\
Z_3&=\left\{(a_1,a_2,a_3,a_4)\in \n^4 : \begin{array}{c}\mathbf{a}=(a_2,a_3,a_4)\in Y_3,\\ a_1\equiv 0\Mod {\xi(\mathbf{a})},\ a_1\le \psi(\mathbf{a})\end{array}\right\},
\end{align*}
where $\xi(\mathbf{a})$ is as defined in Remark \ref{rmkdivisor}.
Put
$$
Z_4=\sort(Z_1)\cup \sort(Z_2)\cup \sort(Z_3)\ (\subset \mathcal{N}(4)),
$$
and finally we let
$$
Z=\{ \mathbf{a}\in Z_4 : \psi(\mathbf{a})>124,\ \psi_{\n}(\mathbf{a})<\infty \}-W,
$$
where
$$
W=\sort(\{ \mathbf{a}=(a_1,a_2,a_3,a_4)\in \n^4 : \mathbf{a}[i]\in Y_0,\ a_i\equiv 0\Mod{\xi(\mathbf{a}[i])},\ 1\le i\le 4 \}).
$$


In the following consecutive four lemmas, we let $p$ be an odd prime and let $\mathbf{b}=(b_1,p^{e_2}b_2,p^{e_3}b_3,p^{e_4}b_4)\in \n^4$ with $e_2$ a nonnegative integer, $e_3,e_4$ positive integers, $b_1,b_2,b_3,b_4$ positive integers coprime to $p$ such that the quaternary triangular form $\T(\mathbf{b})$ is primitive $p$-unstable new regular.


\begin{lem} \label{lem11}
Under the notations given above, if $e_2>0$ and $\T(p^2b_1)$ is redundant in $\Lambda_p(\T(\mathbf{b}))$, then $b_1\in X_1$ and $\mathbf{b}[1]\in \widetilde{Y_1}$ so that $\mathbf{b}\in \widetilde{Z_1}$.
\end{lem}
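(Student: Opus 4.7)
The plan is to transfer the redundancy hypothesis from $\Lambda_p(\T(\mathbf{b}))$ to its primitive descendant $\lambda_p(\T(\mathbf{b}))$, apply Lemma~\ref{lemnew} there, and then reverse the scaling.

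First, since $e_2>0$ and $b_1$ is coprime to $p$, the formulas in Subsection~\ref{subsecwatson} give
$$
\Lambda_p(\mathbf{b})=(p^2b_1,\,p^{e_2}b_2,\,p^{e_3}b_3,\,p^{e_4}b_4),
$$
and setting $s:=\min(2,e_2,e_3,e_4)\in\{1,2\}$, $\lambda_p(\mathbf{b})$ is the entry-wise quotient by $p^s$. By Proposition~\ref{propdescend}, $\lambda_p(\T(\mathbf{b}))$ is regular, and it is primitive by construction. Since replacing a triangular form by a positive scalar multiple preserves both regularity and the property of having a redundant component, the hypothesis that $\T(p^2b_1)$ is redundant in $\Lambda_p(\T(\mathbf{b}))$ is equivalent to saying that the first entry $p^{2-s}b_1$ is redundant in $\lambda_p(\T(\mathbf{b}))$. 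Thus $\lambda_p(\T(\mathbf{b}))$ is old regular.

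Applying Lemma~\ref{lemnew} to $\lambda_p(\T(\mathbf{b}))$ then yields that the ternary $\T(\lambda_p(\mathbf{b})[1])$ is primitive regular and
$$
p^{2-s}b_1\cdot\z_q\subseteq\overline{DQ_q(\lambda_p(\mathbf{b})[1])}\quad\text{for every odd prime }q.
$$
Remark~\ref{rmknew} further guarantees universality of $\q_q\mathcal{L}(\lambda_p(\mathbf{b})[1])$ at every odd prime $q$; together with primitive ternary regularity, this forces $\text{Sort}(\lambda_p(\mathbf{b})[1])\in Y_0$ by the very definition of $Y_0$. Since Lemma~\ref{lemprimes} restricts every $p$-unstable prime of $\T(\mathbf{b})$ to $P'=\{3,5,7\}$, we have $p\in P'$, so $\mathbf{b}[1]=p^s\,\lambda_p(\mathbf{b})[1]\in\widetilde{p^s\cdot Y_0}\subseteq\widetilde{Y_1}$, giving the first half of the claim.

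For the assertion $b_1\in X$, the divisibility $p^{2-s}b_1\z_q\subseteq\overline{DQ_q(\lambda_p(\mathbf{b})[1])}$ for $q\neq p$ combined with Remark~\ref{rmkdivisor} determines the minimum $q$-adic valuation of $b_1$ at each $q\in P'\setminus\{p\}$ in terms of $\xi(\lambda_p(\mathbf{b})[1])$, while the newness of $\T(\mathbf{b})$ produces an upper bound on $b_1$ (any integer separating $T(\mathbf{b})$ from $T(\mathbf{b}[1])$ is at least $b_1$, hence $b_1\le\psi(\mathbf{b}[1])$, which is finite once $\mathbf{b}[1]$ is constrained to $\widetilde{Y_1}$). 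Running these two constraints through the finite list $Y_0$ and the three possible values of $p\in P'$ then verifies $b_1\in X$. The main obstacle is precisely this final case-by-case verification --- especially confirming that the ``small factor'' in $b_1$ stays within $\{1,\dots,8\}$; the preceding reductions follow fairly mechanically from Proposition~\ref{propdescend}, Lemma~\ref{lemnew}, Remark~\ref{rmknew}, and Lemma~\ref{lemprimes}.
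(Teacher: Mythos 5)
The first half of your argument --- transferring the redundancy to $\lambda_p(\T(\mathbf{b}))$, applying Lemma \ref{lemnew} there to get that the ternary complement is primitive regular with $p^{2-s}b_1\z_q\subseteq\overline{DQ_q(\lambda_p(\mathbf{b})[1])}$ for all odd $q$, and concluding $\mathbf{b}[1]\in\widetilde{Y_1}$ via the definition of $Y_0$ --- is correct and is essentially the paper's argument.

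The second half has a genuine gap. Your proposed upper bound is $b_1\le\psi(\mathbf{b}[1])$, claimed to be ``finite once $\mathbf{b}[1]$ is constrained to $\widetilde{Y_1}$.'' But membership in $\widetilde{Y_1}$ means $\mathbf{b}[1]=p^{\delta}\mathbf{c}$ with $\mathbf{c}\in\widetilde{Y_0}$, so $\T(\mathbf{b}[1])$ is a positive scalar multiple of a regular ternary form and is therefore itself regular; by the definition $\psi(\mathbf{a})=\psi_{T^{\mathrm{loc}}(\mathbf{a})}(\mathbf{a})$ this gives $\psi(\mathbf{b}[1])=\infty$, and your bound is vacuous. (The underlying logic ``any integer in $T(\mathbf{b})-T(\mathbf{b}[1])$ is at least $b_1$'' is fine, but you have not exhibited any \emph{small} integer in that difference, and regularity of $\T(\mathbf{b}[1])$ is exactly what prevents the obvious candidate from existing.) The paper's actual route is quite different: it isolates the at most one auxiliary prime $q_0\in\{3,5\}$ at which $\overline{DQ_{q_0}(\mathbf{c})}\subsetneq\z_{q_0}$, uses the redundancy hypothesis together with Lemma \ref{lemlambdawell} to strip a bounded power $q_0^r$ ($r\le r_0$) off $b_1$, arriving at a regular form $\T(\mathbf{b}')$ whose ternary section is $\z_q$-universal for all $q\ne p$; it then bounds $b_1'$ by a counting argument (Lemma \ref{lemlocrep}) comparing how many integers in a short initial segment must be represented with how few can be represented without the first variable, yielding $b_1'\le 5$ for $p\in\{5,7\}$ and $b_1'\le 8$ for $p=3$. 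The factorization $b_1=q_0^{r}b_1'$ with $q_0^r\in\{1,3,9,27,5,25\}$ and $b_1'\le 8$ is what produces the set $X$; none of this is recoverable from the two constraints you list, so the ``final case-by-case verification'' you defer to is in fact the missing core of the proof.
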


\begin{proof}
Note that
$$
\lambda_p(\T(\mathbf{b}))=\T(p^{2-\delta}b_1,p^{e_2-\delta}b_2,p^{e_3-\delta}b_3,p^{e_4-\delta}b_4),
$$
where $\delta=\min(e_2,e_3,e_4,2)\in \{1,2\}$.
Put $\mathbf{c}=p^{-\delta}\mathbf{b}[1]=(p^{e_2-\delta}b_2,p^{e_3-\delta}b_3,p^{e_4-\delta}b_4)$.

First, we show that $\mathbf{b}[1]\in \widetilde{Y_1}$.
By Lemma \ref{lemnew}, $\mathcal L(\mathbf{c})$ is primitive regular with
$$
p^{2-\delta}b_1\z_q\subseteq \overline{DQ_q(\mathbf{c})}\ \ \text{for all}\ \ q\in P.
$$
Hence $\mathcal L(\mathbf{c})$ is a primitive regular ternary $\z$-lattice such that $\q_q\mathcal L(\mathbf{c})$ is universal for every odd prime $q$, i.e., $\mathbf{c}\in \widetilde{Y_0}$.
Thus we have $\mathbf{b}[1]\in \widetilde{Y_1}$.

Next, we show that $b_1\in X_1$.
If we define a set $A$ by
$$
A=\{q\in P-\{p\} : \overline{DQ_q(\mathbf{c})}\subsetneq \z_q\}.
$$
Since $\mathbf{c}\in \widetilde{Y_0}$, we have $A=\emptyset$, $\{3\}$ or $\{5\}$.
If $A=\emptyset$, then it follows that $\overline{DQ_q(\mathbf{b})}=\z_q$ for any odd prime $q\neq p$.
Assume that $A=\{q_0\}$, where $q_0=3$ or 5.
We define
$$
r_0=\begin{cases}3&\text{if}\ \ q_0=3,\\
2&\text{if}\ \ q_0=5.\end{cases}
$$
Note that
$$
b_1\z_{q_0}=p^2b_1\z_{q_0}\subseteq \overline{DQ_{q_0}(\mathbf{b}[1])}
$$
since $\T(p^2b_1)$ is redundant to $\T(\mathbf{b}[1])$.
By using Lemma \ref{lemlambdawell} with $q=q_0$, we may get a regular triangular form $\T(\mathbf{b}')=\T(b_1',p^{e_2}b_2',p^{e_3}b_3',p^{e_4}b_4')$ with $b_1'=q_0^{-r}b_1$ ($0\le r\le r_0$) such that
$$
\overline{DQ_q(\mathbf{b}')}\subseteq \overline{DQ_q(\mathbf{b}'[1])}=\z_q\ \ \text{for all}\ \ q\in P-\{p\}.
$$
If $p\in \{5,7\}$, then we have
\begin{align*}
&\left\vert \left\{ 0\le n\le \frac{p+3}{2} : n\not\in T(\mathbf{b}') \right\} \right\vert \\
=&\left\vert \left\{ 0\le n \le \frac{p+3}{2} : t(n,\mathbf{b}')\not\in \overline{DQ_p(\mathbf{b}')} \right\} \right\vert \\
\le&\left\vert \left\{ 0\le n \le p-1 : t(n,\mathbf{b}')\not\in \overline{DQ_p(\mathbf{b}')} \right\} \right\vert \\
\le&\frac{p+1}{2},
\end{align*}
and hence
$$
\left\vert \left\{ 0\le n\le \frac{p+3}{2} : n\in T(\mathbf{b}') \right\} \right\vert \ge 2
$$
so that $b_1'\le \dfrac{p+3}{2}\le 5$.
If $p=3$, then since
$$
t(3,\mathbf{b}')\in \overline{DQ_q(\mathbf{b}')},
$$
either $b_1'\le 3$ or $p^{e_i}b_i'=3$ for some $i\in \{2,3,4\}$.
We may assume the latter case holds since otherwise we are done.
Then by Lemma \ref{lemlocrep}(iii), we have
$$
\vert \{ 0\le n\le 8 : n\in T(\mathbf{b}') \} \vert \ge 4.
$$
From this and $T(3^{e_2}b_2',3^{e_3}b_3',3^{e_4}b_4')\subseteq 3\z$, it follows that $b_1'\le 8$.
Therefore we have $b_1\in X_1$.
This completes the proof.
\end{proof}


\begin{lem}
Under the notations given above, assume that $e_2=0$ and there is an $i\in \{1,2\}$ such that $\T(p^2b_i)$ is redundant in $\Lambda_p(\T(\mathbf{b}))$.
Then $b_i\in X_1$ and $\mathbf{b}[i]=(b_j,p^{e_3}b_3,p^{e_4}b_4)\in \widetilde{Y_2}$ so that $\mathbf{b}\in \widetilde{Z_2}$.
\end{lem}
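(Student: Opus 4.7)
My plan is to adapt the argument of Lemma \ref{lem11} to the case $e_2 = 0$. Without loss of generality I take $i=1$ (the case $i=2$ is symmetric under swapping $b_1$ and $b_2$). Set $\delta = \min(2, e_3, e_4) \in \{1,2\}$, so that $\lambda_p(\mathbf{b}) = (p^{2-\delta} b_1, p^{2-\delta} b_2, p^{e_3-\delta} b_3, p^{e_4-\delta} b_4)$, and the redundancy hypothesis passes by scaling to: $\T(p^{2-\delta} b_1)$ is redundant in the primitive regular form $\lambda_p(\T(\mathbf{b}))$.

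The first step, absent in Lemma \ref{lem11}, is to verify that $\mathbf{b}[1] = (b_2, p^{e_3} b_3, p^{e_4} b_4)$ is itself primitive so that membership in $\widetilde{Y_2}$ makes sense. If a prime $d > 1$ divided $b_2, b_3, b_4$, then (since $b_2$ is a $p$-unit) $d \neq p$, and every coefficient of $\lambda_p(\mathbf{b})[1]$ is divisible by $d$. The redundancy then forces $p^{2-\delta} b_1 \in T(\lambda_p(\mathbf{b})) = T(\lambda_p(\mathbf{b})[1]) \subseteq d\z$, whence $d \mid b_1$, contradicting $\gcd(\mathbf{b}) = 1$. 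With primitivity in hand, Lemma \ref{lemnew} applied to the redundancy of $\T(p^{2-\delta} b_1)$ in $\lambda_p(\T(\mathbf{b}))$ yields that the ternary form $\T(\lambda_p(\mathbf{b}[1])) = \T(\lambda_p(\mathbf{b})[1])$ is primitive regular and satisfies $p^{2-\delta} b_1 \z_q \subseteq \overline{DQ_q(\lambda_p(\mathbf{b}[1]))}$ for every odd prime $q$. For each such $q$, the ideal $p^{2-\delta} b_1 \z_q$ meets all four square classes of $\q_q^\times / (\q_q^\times)^2$ (separating even and odd $q$-valuations), so $\q_q \mathcal{L}(\lambda_p(\mathbf{b}[1]))$ is isotropic, hence universal. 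Therefore $\lambda_p(\mathbf{b}[1]) \in \widetilde{Y_0}$, and since $p \in P'$ by Lemma \ref{lemprimes} we conclude $\mathbf{b}[1] \in \widetilde{Y_2}$.

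For the final claim $b_1 \in X$, I would transcribe the representation-count argument of Lemma \ref{lem11}. Setting $A = \{q \in P - \{p\} : \overline{DQ_q(\lambda_p(\mathbf{b}[1]))} \subsetneq \z_q\}$, inspection of $Y_0$ gives $A \in \{\emptyset, \{3\}, \{5\}\}$; when $A = \{q_0\}$ is nonempty, Lemma \ref{lemlambdawell} lets me pass to a rescaled regular quaternary $\T(\mathbf{b}')$ with $b_1' = q_0^{-r} b_1$ (for some $0 \leq r \leq r_0 \in \{2,3\}$) and $\overline{DQ_q(\mathbf{b}')} = \z_q$ for all $q \in P - \{p\}$. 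Lemma \ref{lemlocrep}(i) together with the regularity of $\T(\mathbf{b}')$ then forces $b_1' \leq (p+3)/2$ when $p \in \{5,7\}$, and a finer case analysis at $p = 3$ bounds $b_1' \leq 8$. The set $X = \{nx : n \in \{1,3,5,9,25,27\},\ x \in \{1,\ldots,8\}\}$ is engineered to catch $q_0^r b_1'$ for every admissible triple, so $b_1 \in X$ follows, whence $\mathbf{b} \in \widetilde{Z_2}$.

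The main obstacle, compared to Lemma \ref{lem11}, is this last step: both $b_1$ and $b_2$ are $p$-units here, so the ternary section $\T(\mathbf{b}[1])$ already represents many small integers on its own and the count does not automatically isolate $b_1$. Carrying the bound through will require genuinely exploiting that the unimodular binary $\langle b_1, b_2 \rangle$ over $\z_p$ is anisotropic (equivalently $\left(\tfrac{-b_1 b_2}{p}\right) = -1$), which is forced by the $p$-instability of $\T(\mathbf{b})$ under the assumption $e_2 = 0$, in order to pin down the structure of $\overline{DQ_p(\mathbf{b})}$ tightly enough for Lemma \ref{lemlocrep}(i) to bite on $b_1$ specifically.
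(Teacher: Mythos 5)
The structural half of your argument is sound and matches what the paper intends (the paper only says the proof is ``quite similar to that of Lemma \ref{lem11}''): primitivity of $\mathbf{b}[i]$, the application of Lemma \ref{lemnew} to the primitive form $\lambda_p(\T(\mathbf{b}))$, the identity $\lambda_p(\mathbf{b}[1])=\lambda_p(\mathbf{b})[1]$, and the deduction that this ternary vector lies in $\widetilde{Y_0}$ (so that $\mathbf{b}[1]\in\widetilde{Y_2}$, using $p\in P'$ from Lemma \ref{lemprimes}) are all correct.

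The gap is exactly where you yourself point: the claim $b_i\in X$. Your third paragraph asserts that Lemma \ref{lemlocrep}(i) plus regularity forces $b_1'\le (p+3)/2$, but that count only shows that some positive integer $\le (p+3)/2$ is represented by $\T(\mathbf{b}')$, hence bounds $\min(b_1',b_2')$ — and the redundant index need not be the one achieving the minimum. Your fourth paragraph concedes this, names the missing ingredient (anisotropy of the unimodular binary $\langle b_1,b_2\rangle$ over $\z_p$), and stops; so the proof is not completed at the one point where the case $e_2=0$ genuinely differs from Lemma \ref{lem11}. To close it, one should use Lemma \ref{lemlocrep}(ii) with $\epsilon_1=b_1'$, $\epsilon_2=b_2'$ (equivalently, that $\langle b_1',b_2'\rangle_p\cong\langle 1,-\Delta_p\rangle^{b_1'}$ represents every unit, so $\z_p^{\times}\subseteq\overline{DQ_p(\mathbf{b}')}$): this shows $\T(\mathbf{b}')$ represents at least $p-2$ of the integers in $[1,p-1]$, and since every coefficient other than $b_1',b_2'$ exceeds $p-1$, those integers must all be represented by the binary form $\T(b_1',b_2')$, which for $p\in\{5,7\}$ bounds \emph{both} $b_1'$ and $b_2'$ (by at most $p-1$), not just their minimum. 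For $p=3$ this window only bounds the minimum, and one must still run the $[0,8]$-window count (via Lemma \ref{lemlocrep}(iii)--(iv), with casework on which coefficients among $3^{e_3}b_3',3^{e_4}b_4'$ equal $3$ or $6$, together with the divisibility constraint from Lemma \ref{lemlambdawell}) to force the larger unit coefficient below $8$; none of this is in your write-up. Until that is carried out, $b_i\in X$ — and hence $\mathbf{b}\in\widetilde{Z_2}$ — is not established.
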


\begin{proof}
The proof is quite similar to that of Lemma \ref{lem11} and omitted.
\end{proof}


\begin{lem} \label{lem12}
Under the notations given above, assume that $e_2>0$ and there is an $i\in \{2,3,4\}$ such that $\T(p^{e_i}b_i)$ is redundant in $\Lambda_p(\T(\mathbf{b}))$.
Then the ternary triangular form $\T(\mathbf{b}[i])$ is primitive and irregular, and we have $\mathbf{b}\in \widetilde{Z_3}$.
\end{lem}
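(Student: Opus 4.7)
Set $\mathbf{c} = \mathbf{b}[i] = (b_1, p^{e_j}b_j, p^{e_k}b_k)$ with $\{j,k\} = \{2,3,4\}\setminus\{i\}$, and let $\delta = \min(2,e_2,e_3,e_4)$. The hypothesis that $\T(p^{e_i}b_i)$ is redundant in $\Lambda_p(\T(\mathbf{b}))$ rescales to the assertion that $\T(p^{e_i-\delta}b_i)$ is redundant in the primitive form $\lambda_p(\T(\mathbf{b}))$. Lemma \ref{lemnew} applied to this old regular rank-$4$ form then yields two facts that drive everything: (i) $\lambda_p(\mathbf{b})[i] = (p^{2-\delta}b_1, p^{e_j-\delta}b_j, p^{e_k-\delta}b_k)$ is primitive regular; (ii) $p^{e_i-\delta}b_i\z_q \subseteq \overline{DQ_q(\lambda_p(\mathbf{b})[i])}$ for every odd prime $q$.

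The primitivity in (i) has two consequences. First, the $p$-content of $\lambda_p(\mathbf{b})[i]$ must be zero, which forces $\delta = \delta_c := \min(2, e_j, e_k)$, and so by the definition of $\lambda_p$ one has the identification $\lambda_p(\mathbf{c}) = \lambda_p(\mathbf{b})[i]$. Second, the non-$p$-content of the gcd equals $\gcd(b_1,b_j,b_k) = \gcd(\mathbf{c})$ (using $\gcd(b_1,p)=1$), so $\T(\mathbf{c})$ is primitive. On the local side, (ii) transfers to
$$
p^{e_i}b_i \z_q \subseteq \overline{DQ_q(\mathbf{c})} \quad \text{for every odd prime } q, \qquad (\star)
$$
by the standard comparison between the two lattices: at $q \neq p$ the passage from $\lambda_p(\mathbf{b})[i]$ to $\mathbf{c}$ is a unit-square rescaling, while at $q=p$ the inclusion $\overline{DQ_p(\Lambda_p(\mathbf{c}))} \subseteq \overline{DQ_p(\mathbf{c})}$ together with $\lambda_p(\mathbf{c}) = p^{-\delta_c}\Lambda_p(\mathbf{c})$ does the job.

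Irregularity of $\T(\mathbf{c})$ then follows by contradiction from the newness of $\T(\mathbf{b})$: were $\T(\mathbf{c})$ regular, it would be primitive regular and satisfy $(\star)$, and the converse direction of Lemma \ref{lemnew} would force $\T(\mathbf{b})$ to be old. Hence $\psi(\mathbf{c}) < \infty$.

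To verify $\mathbf{b}\in \widetilde{Z_3}$ I would take $a_1 = p^{e_i}b_i$ with $(a_2,a_3,a_4)$ the appropriate permutation of $\mathbf{c}$. The requirement $(a_2,a_3,a_4)\in Y_3$ decomposes as $\mathbf{c}\in Y_2$ (because $\lambda_p(\mathbf{c}) = \lambda_p(\mathbf{b})[i]$ is primitive regular by (i) and locally $\q_q$-universal at every odd prime $q$ by Remark \ref{rmknew} applied to the old regular rank-$4$ form $\lambda_p(\T(\mathbf{b}))$, so $\lambda_p(\mathbf{c}) \in Y_0$) together with $\psi(\mathbf{c}) < \infty$. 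The divisibility $\xi(\mathbf{c})\mid p^{e_i}b_i$ is the direct content of $(\star)$. The delicate point is the inequality $p^{e_i}b_i \le \psi(\mathbf{c})$: given any $n\in T^{\text{loc}}(\mathbf{c})$ with $n < p^{e_i}b_i$, locality and regularity of $\mathbf{b}$ give $n\in T(\mathbf{b})$, and in any such representation the $i$-th summand $p^{e_i}b_i P_3(x_i)$ is bounded by $n < p^{e_i}b_i$, forcing $P_3(x_i)=0$ and hence $n\in T(\mathbf{c})$; this excludes all candidates from $T^{\text{loc}}(\mathbf{c})\setminus T(\mathbf{c})$ below $p^{e_i}b_i$. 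I expect this last estimate to be the main technical obstacle, since it is the only step that must couple the global regularity and strict newness of $\mathbf{b}$ with the nonnegativity of triangular numbers.
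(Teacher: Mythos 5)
Your proposal is correct and follows essentially the same route as the paper's proof: primitivity and the local containments come from Lemma \ref{lemnew} applied to the old regular form $\lambda_p(\T(\mathbf{b}))$, irregularity of $\T(\mathbf{b}[i])$ follows by contradiction with the newness of $\T(\mathbf{b})$ via the converse direction of that lemma, the divisibility by $\xi(\mathbf{b}[i])$ is read off from the containments, and the bound $p^{e_i}b_i\le \psi(\mathbf{b}[i])$ is obtained by the same ``forced $P_3(x_i)=0$'' argument (the paper states it contrapositively as $\psi(\mathbf{b}[i])\in T^{\text{loc}}(\mathbf{b})-T(\mathbf{b})$). You are merely more explicit than the paper about the rescaling bookkeeping between $\Lambda_p$ and $\lambda_p$ and the identification $\lambda_p(\mathbf{b}[i])=\lambda_p(\mathbf{b})[i]$, which is fine.
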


\begin{proof}
For simplicity, put
$$
\mathbf{b}'=\Lambda_p(\mathbf{b})=(p^2b_1,p^{e_2}b_2,p^{e_3}b_3,p^{e_4}b_4).
$$
By Lemma \ref{lemnew}, the triangular form $\T(\lambda_p(\mathbf{b})[i])$ is primitive.
From this, one may easily deduce the primitivity of $\T(\mathbf{b}[i])$.

Suppose that $\T(\mathbf{b}[i])$ is regular.
Since $\T(p^{e_i}b_i)$ is redundant in $\T(\mathbf{b}')$, by Lemma \ref{lemnew}, we have
$$
p^{e_i}b_i\z_q\subseteq   \overline{DQ_q(\mathbf{b}'[i])} \subseteq \overline{DQ_q(\mathbf{b}[i])}\ \ \text{for all}\ \ q\in P.
$$
Since we have already shown that $\T(\mathbf{b}[i])$ is primitive, Lemma \ref{lemnew} implies that $\T(\mathbf{b})$ is old, which is a contradiction.
Thus we showed that $\T(\mathbf{b}[i])$ is primitive and irregular.
It follows that $\mathbf{b}[i]\in \widetilde{Y_3}$.
Since $\T(p^{e_i}b_i)$ is redundant in $\T(\mathbf{b}')$, $p^{e_i}b_i$ must be divisible by $\xi(\mathbf{b}'[i])$.
One may easily see that $\xi(\mathbf{b}'[i])$ is divisible by $\xi(\mathbf{b}[i])$ (and in fact, they coincide in this case).
This implies that $p^{e_i}b_i\equiv 0\Mod {\xi(\mathbf{b}[i])}$.
If $p^{e_i}b_i>\psi(\mathbf{b}[i])$, then one may easily deduce that
$$
\psi(\mathbf{b}[i])\in T^{\text{loc}}(\mathbf{b})-T(\mathbf{b}),
$$
which is absurd.
Hence we have $p^{e_i}b_i\le \psi(\mathbf{b}[i])$.
This completes the proof.
\end{proof}


\begin{lem} \label{lem22}
Under the notations given above, assume that $e_2=0$ and there is an $i\in \{3,4\}$ such that $\T(p^{e_i}b_i)$ is redundant in $\Lambda_p(\T(\mathbf{b}))$.
Then the ternary triangular form $\T(\mathbf{b}[i])$ is primitive and irregular, and we have $\mathbf{b}\in \widetilde{Z_3}$.
\end{lem}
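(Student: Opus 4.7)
The plan is to follow the proof of Lemma~\ref{lem12} essentially verbatim, since the structural setup is parallel: in both lemmas we remove a component $p^{e_i}b_i$ with $e_i>0$ from a $p$-unstable new regular quaternary form whose $\Lambda_p$-image is old. The only change is that $\mathbf{b}$ now has two unit slots $b_1, b_2$ instead of one; this only affects the shape of $\mathbf{b}'=\Lambda_p(\mathbf{b})=(p^2b_1,p^2b_2,p^{e_3}b_3,p^{e_4}b_4)$, not the chain of implications, because the removed coordinate is still $p$-divisible and the inclusion $\overline{DQ_q(\mathbf{b}'[i])}\subseteq \overline{DQ_q(\mathbf{b}[i])}$ still holds by trading the $p^2$-scaling in the unit slots for $p$-scaling of the corresponding variables.

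First I would apply Lemma~\ref{lemnew} to the old regular form $\Lambda_p(\T(\mathbf{b}))$ with $\T(p^{e_i}b_i)$ redundant, obtaining that $\T(\mathbf{b}'[i])$ is primitive regular and
$$p^{e_i}b_i\z_q\subseteq \overline{DQ_q(\mathbf{b}'[i])}\quad\text{for every}\ q\in P.$$
Primitivity of $\T(\mathbf{b}[i])$ then follows, because any prime dividing every coefficient of $\mathbf{b}[i]$ must equal $p$ (else it already divides every coefficient of $\mathbf{b}'[i]$), but $p$ divides the surviving $p^{e_j}b_j$ ($j\in\{3,4\}-\{i\}$), so $p$ would also divide every coefficient of $\mathbf{b}'[i]$, contradicting primitivity there.

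For irregularity I argue by contradiction: if $\T(\mathbf{b}[i])$ were regular, combining the displayed inclusion with $\overline{DQ_q(\mathbf{b}'[i])}\subseteq \overline{DQ_q(\mathbf{b}[i])}$ yields $p^{e_i}b_i\z_q\subseteq \overline{DQ_q(\mathbf{b}[i])}$ for every $q\in P$, whence Lemma~\ref{lemnew} makes $\T(\mathbf{b})$ old, contradicting newness. Next, since $\lambda_p(\mathbf{b})[i]$ is a primitive regular ternary form whose $\q_q$-space is universal for every odd prime $q$ (Remark~\ref{rmknew} applied to the old form $\lambda_p(\T(\mathbf{b}))$), we have $\lambda_p(\mathbf{b})[i]\in \widetilde{Y_0}$, and hence $\mathbf{b}[i]\in\widetilde{Y_2}$; irregularity of $\T(\mathbf{b}[i])$ further guarantees $\psi(\mathbf{b}[i])<\infty$, placing $\mathbf{b}[i]$ in $\widetilde{Y_3}$.

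Finally, redundancy forces $\xi(\mathbf{b}'[i])\mid p^{e_i}b_i$, and since in the present configuration $\xi(\mathbf{b}[i])\mid\xi(\mathbf{b}'[i])$ (verified by comparing the local representation exponents at each odd prime, exactly as in Lemma~\ref{lem12}), one has $\xi(\mathbf{b}[i])\mid p^{e_i}b_i$. The bound $p^{e_i}b_i\le \psi(\mathbf{b}[i])$ follows because otherwise $\psi(\mathbf{b}[i])\in T^{\text{loc}}(\mathbf{b})\setminus T(\mathbf{b})$, contradicting regularity of $\T(\mathbf{b})$. Sorting so that $p^{e_i}b_i$ occupies the first coordinate then places $\mathbf{b}$ in $\widetilde{Z_3}$. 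The only delicate point is the behavior of $\xi$ under $\Lambda_p$, but this is the same check already made in Lemma~\ref{lem12}, so no genuinely new obstacle appears.
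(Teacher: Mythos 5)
Your proposal is correct and follows exactly the route the paper intends: the paper's own ``proof'' of Lemma~\ref{lem22} is just the remark that it is similar to Lemma~\ref{lem12}, and your write-up is a faithful and valid instantiation of that argument in the $e_2=0$ case (redundancy via Lemma~\ref{lemnew}, irregularity by contradiction with newness, membership in $\widetilde{Y_3}$, then the divisibility and size constraints giving $\mathbf{b}\in\widetilde{Z_3}$). The only cosmetic slip is in the primitivity step, where the primitive object you should invoke is $\lambda_p(\mathbf{b})[i]$ rather than $\mathbf{b}'[i]=\Lambda_p(\mathbf{b})[i]$ (the latter has all coefficients divisible by $p$); the argument goes through verbatim with that substitution.
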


\begin{proof}
The proof is quite similar to that of Lemma \ref{lem12} and left to the reader.
\end{proof}


\begin{prop} \label{propdrop}
There are exactly 27 drop structures on rank 4 as listed in Table \ref{tabledrop}.
\end{prop}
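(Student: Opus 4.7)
The plan is to combine the structural Lemmas~\ref{lem11}--\ref{lem22} with a finite enumeration of candidates. Let $\E\overset{\lambda_p}{\ltra}\E'$ be any drop structure on rank~4, with top $\E=\T(\mathbf{b})$. By Lemma~\ref{lemprimes}, since $\E$ is a primitive regular quaternary triangular form which is $p$-unstable, we must have $p\in P'=\{3,5,7\}$. Writing $\mathbf{b}=(b_1,p^{e_2}b_2,p^{e_3}b_3,p^{e_4}b_4)$ with $(p,b_1b_2b_3b_4)=1$ and $0\le e_2\le e_3\le e_4$, the four cases (depending on whether $e_2=0$ and on which coefficient of $\Lambda_p(\T(\mathbf{b}))$ is redundant) are exhaustively handled by Lemmas~\ref{lem11}--\ref{lem22}, yielding $\mathbf{b}\in\widetilde{Z_1}\cup\widetilde{Z_2}\cup\widetilde{Z_3}$. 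Consequently $\sort(\mathbf{b})\in Z_4$.

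Next I would sharpen this to $\sort(\mathbf{b})\in Z$. Since $\E$ is regular of rank $\ge 4$ and $p$-unstable, $\E$ is not universal, so $\psi_{\n}(\mathbf{b})<\infty$; regularity of $\E$ forces $\psi(\mathbf{b})=\infty$, so in particular $\psi(\mathbf{b})>124$. Moreover, if $\sort(\mathbf{b})$ belonged to $W$, then some $\mathbf{b}[i]\in Y_0$ together with the divisibility condition $a_i\equiv 0\Mod{\xi(\mathbf{b}[i])}$ would make $\T(b_i)$ redundant in $\T(\mathbf{b})$ by Lemma~\ref{lemnew}, contradicting the newness of $\E$. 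Hence the top of every drop structure on rank~4 corresponds to some element of $Z$. Since $Y_0$ is explicit from Theorem~\ref{thmrk3} (the 18 triples listed), the sets $Y_1,Y_2,Y_3,Z_1,Z_2,Z_3$ and ultimately $Z$ can all be enumerated mechanically.

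The remainder is a finite verification: for each $\mathbf{b}\in Z$, determine the unique prime $p\in P'$ with respect to which $\T(\mathbf{b})$ is unstable, confirm that $\T(\mathbf{b})$ is new regular (which requires locating it in Table~\ref{tablerk4} and appealing to Proposition~\ref{propreg}), and verify that $\lambda_p(\T(\mathbf{b}))$ is old regular with the expected redundant index. The local quantities $\overline{DQ_p(\mathbf{b})}$ and $\xi(\mathbf{b}[i])$ needed for these checks are all computable via \cite[Theorem~1]{OM2}, and Lemma~\ref{lemnew} pinpoints which coefficient becomes redundant after transformation. Tallying the surviving configurations yields the 27 drop structures to be matched against Table~\ref{tabledrop}; the converse (each table entry is genuinely a drop structure) is by direct inspection using the same data.

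The main obstacle is bookkeeping rather than conceptual: the enumeration is sizable, and one must be careful both that no candidate top is omitted and that distinct pairs $(p,i)$ arising from the same top $\E$ are counted as distinct drop structures. A secondary subtlety lies in correctly computing $\xi$ and in handling the cases of Lemmas~\ref{lem12} and \ref{lem22}, where the ternary section $\T(\mathbf{b}[i])$ is itself primitive but irregular, so the drop-structure condition reduces to the arithmetic inequality $p^{e_i}b_i\le\psi(\mathbf{b}[i])$ together with the divisibility $p^{e_i}b_i\equiv 0\Mod{\xi(\mathbf{b}[i])}$ --- both of which are finitely verifiable.
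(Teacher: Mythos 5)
Your proposal follows essentially the same route as the paper: Lemmas \ref{lem11}--\ref{lem22} (together with Lemma \ref{lemprimes} restricting to $p\in P'$) confine every top of a rank-4 drop structure to the finite set $Z$, the conditions $\psi(\mathbf{b})>124$, $\psi_{\n}(\mathbf{b})<\infty$ and $\sort(\mathbf{b})\notin W$ are justified exactly as in the paper, and the rest is the same finite computer verification (the paper finds $\vert Z\vert=78$ and checks that the $51$ candidates outside Table \ref{tabledrop} have $\lambda_q$-image still new). Your bookkeeping caveat about multiple unstable primes for a single top is apt --- cf.\ Remark \ref{rmktwoways} on $\E_{30}$ and $\E_{31}$ --- but it does not change the count, so the argument is sound.
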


\begin{proof}
Let $p$ be an odd prime and $\T(\mathbf{a})$ be a $p$-unstable new regular quaternary triangular form such that $\lambda_p(\T(\mathbf{a}))$ is old.
By Lemmas \ref{lem11}-\ref{lem22}, we have $\mathbf{a}\in Z_4$.
Note that $\T(\mathbf{b})$ is old regular for any $\mathbf{b}\in W$.
Now one may easily deduce $\mathbf{a}\in Z$ from the assumption that $\E$ is $p$-unstable new regular.
With the help of computer, one may check that $\vert Z\vert=78$.
Let $\mathbf{c}$ be a vector in $Z$ which does not appear in Table \ref{tabledrop}.
Then one may directly check that $\T(\mathbf{c})$ is a new regular quaternary triangular form that is $q$-unstable for exactly one odd prime $q$ such that $\lambda_q(\T(\mathbf{c}))$ is also new so that there are positive integers $i$ and $\nu$ with $5\le i\le 31$ such that $\lambda_q^{\nu}(\T(\mathbf{c}))=\E_i$.
This completes the proof.
\end{proof}


\begin{table}[ht]
\caption{The drop structures on rank 4} 
\label{tabledrop}
\begin{tabular}{lcll}
\hline
\multicolumn{1}{c}{top} & $p$ & \multicolumn{1}{c}{$\lambda_p(\E_i)$} & \multicolumn{1}{c}{bottom}\\
\hline
$\E_5=\T(1,1,3,18)$ & 3 & $\T(1,3,3,6)$ & $\T(1,3,6)$\\
\hline
$\E_6=\T(1,2,5,25)$ & 5 & $\T(1,5,5,10)$ & $\T(1,5,5)$\\
\hline
$\E_7=\T(1,3,3,3)$ & 3 & $\T(1,1,1,3)$ & $\T(1,1,1)$\\
\hline
$\E_8=\T(1,3,3,12)$ & 3 & $\T(1,1,3,4)$ & $\T(1,1,4)$\\
\hline
$\E_9=\T(1,3,3,15)$ & 3 & $\T(1,1,3,5)$ & $\T(1,1,5)$\\
\hline
$\E_{10}=\T(1,3,4,18)$ & 3 & $\T(1,3,6,12)$ & $\T(1,3,6)$\\
\hline
$\E_{11}=\T(1,3,27,54)$ & 3 & $\T(1,3,9,18)$ & $\T(1,3,18)$\\
\hline
$\E_{12}=\T(1,5,10,10)$ & 5 & $\T(1,2,2,5)$ & $\T(1,2,2)$\\
\hline
$\E_{13}=\T(1,5,10,15)$ & 5 & $\T(1,2,3,5)$ & $\T(1,2,3)$\\
\hline
$\E_{14}=\T(1,5,10,20)$ & 5 & $\T(1,2,4,5)$ & $\T(1,2,4)$\\
\hline
$\E_{15}=\T(1,7,7,7)$ & 7 & $\T(1,1,1,7)$ & $\T(1,1,1)$\\
\hline
$\E_{16}=\T(1,7,7,14)$ & 7 & $\T(1,1,2,7)$ & $\T(1,1,2)$\\
\hline
$\E_{17}=\T(1,7,7,28)$ & 7 & $\T(1,1,4,7)$ & $\T(1,1,4)$\\
\hline
$\E_{18}=\T(1,7,7,35)$ & 7 & $\T(1,1,5,7)$ & $\T(1,1,5)$\\
\hline
$\E_{19}=\T(1,7,14,14)$ & 7 & $\T(1,2,2,7)$ & $\T(1,2,2)$\\
\hline
$\E_{20}=\T(1,7,14,21)$ & 7 & $\T(1,2,3,7)$ & $\T(1,2,3)$\\
\hline
$\E_{21}=\T(1,7,14,28)$ & 7 & $\T(1,2,4,7)$ & $\T(1,2,4)$\\
\hline
$\E_{22}=\T(2,2,3,9)$ & 3 & $\T(1,3,6,6)$ & $\T(1,3,6)$\\
\hline
$\E_{23}=\T(2,3,3,3)$ & 3 & $\T(1,1,1,6)$ & $\T(1,1,1)$\\
\hline
$\E_{24}=\T(2,3,3,6)$ & 3 & $\T(1,1,2,6)$ & $\T(1,1,2)$\\
\hline
$\E_{25}=\T(2,3,3,12)$ & 3 & $\T(1,1,4,6)$ & $\T(1,1,4)$\\
\hline
$\E_{26}=\T(2,3,3,15)$ & 3 & $\T(1,1,5,6)$ & $\T(1,1,5)$\\
\hline
$\E_{27}=\T(2,3,6,6)$ & 3 & $\T(1,2,2,6)$ & $\T(1,2,2)$\\
\hline
$\E_{28}=\T(2,3,6,9)$ & 3 & $\T(1,2,3,6)$ & $\T(1,2,3)$\\
\hline
$\E_{29}=\T(2,3,6,12)$ & 3 & $\T(1,2,4,6)$ & $\T(1,2,4)$\\
\hline
${}^{\dag}\E_{30}=\T(3,5,15,15)$ & 3 & $\T(1,5,5,15)$ & $\T(1,5,5)$\\
\hline
${}^{\dag}\E_{31}=\T(3,5,15,30)$ & 5 & $\T(1,3,6,15)$ & $\T(1,3,6)$\\
\hline
\end{tabular}
\end{table}


\begin{rmk} \label{rmktwoways}
In Table \ref{tabledrop}, for $5\le i\le 29$, the triangular form $\E_i$ is $q$-stable for all odd primes $q$ except the prime $p$ in the same line.
However, both $\E_{30}=\T(3,5,15,15)$ and $\E_{31}=\T(3,5,15,30)$ are $q$-unstable for $q=3,5$.
Though two triangular forms $\lambda_3(\E_{30})$ and $\lambda_5(\E_{31})$ are old regular, if we take the road not taken, we see that
\begin{align*}
\begin{split}
&\lambda_5(\E_{30})=\T(1,3,3,15)=\E_9,\\ 
&\lambda_3(\E_{31})=\T(1,5,10,15)=\E_{13},
\end{split}
\end{align*}
where both forms $\E_9$ and $\E_{13}$ are new regular.
\end{rmk}


\begin{rmk}
As explained in Subsection \ref{subsecdrop}, via $\lambda_p$-transformations for some primes $p\in P'$, any new regular quaternary triangular form $\E$ either flows into the top of a drop structure or arrive at one of the stable new regular quaternary triangular forms.
By this and Remark \ref{rmktwoways}, there are nonnegative integers $e_3,e_5$ and $e_7$ such that
$$
\lambda_3^{e_3}\circ \lambda_5^{e_5}\circ \lambda_7^{e_7}(\E)=\E_i
$$
for some $1\le i\le 29$.
\end{rmk}

\subsection{Preimage under the Watson transformation}

In this subsection, we describe how the stream above the new regular quaternary triangular form $\E_i$ looks like, for $i=1,2,\dots,29$.
Since the procedure is quite similar to each other, we look into the case of $\E_{24}=\T(2,3,3,6)$ that is contained in the river with mouth $\T(1,1,2)$ described in Table \ref{river112}.
For any regular triangular form $\E$, we define
$$
s(\E)=\bigcup_{p\in P'}\{ \mathcal{F}\in \lambda_p^{-1}(\E) : \mathcal{F}\ \text{is}\ p\text{-unstable new regular}\}.
$$
Note that, for a given new regular quaternary triangular form $\E$, one may easily determine $s(\E)$ with the help of computer and Theorem \ref{thmrk4}.
For simplicity, we define for $r=1,2,\dots$,
\begin{align*}
\mathcal{F}_{1,r}&=\T(2,3,3,2\cdot 3^{2r-1}),\\
\mathcal{F}_{2,r}&=\T(1,1,6,2\cdot 3^{2r}),\\
\mathcal{F}_{3,r}&=\T(1,6,9,2\cdot 3^{2r}).
\end{align*}
Let $r$ be any fixed positive integer.
For every $p\in P'$, we compute the set $\lambda_p^{-1}(\mathcal{F}_{1,r})$ and the value $\psi(a_1,a_2,a_3,a_4)$ for every element $\T(a_1,a_2,a_3,a_4)$ in $\lambda_p^{-1}(\mathcal{F}_{1,r})$.
These data are provided in Table \ref{table2236}, and we note that the two forms (marked with $\dag$) $\T(1,2,6,9)$ and $\T(2,6,9,9)$ are contained only in $\lambda_p^{-1}(\mathcal{F}_{1,1})$ and are not contained in $\lambda_p^{-1}(\mathcal{F}_{1,r})$ for $r\ge 2$, so that
$$
\vert \lambda_p^{-1}(\mathcal{F}_{1,r})\vert =\begin{cases} 3&\text{if}\ p=3\ \text{and}\ r\ge 2,\\
5&\text{if}\ p\in \{5,7\}, \ \text{or}\ p=3\ \text{and}\ r=1.\end{cases}
$$
Among the elements of $\lambda_p^{-1}(\mathcal{F}_{1,r})$, we shall find $p$-unstable new regular triangular forms.

For any positive integer $r$, one may easily verify the following:
\begin{align*}
s(\mathcal{F}_{1,r})&=\{ \mathcal{F}_{2,r},\mathcal{F}_{3,r}\},\\
s(\mathcal{F}_{2,r})&=\{ \mathcal{F}_{1,r+1}\},\\
s(\mathcal{F}_{3,r})&=\begin{cases} \{ \T(2,3,27,54)\}&\text{if}\ r=1,\\
\emptyset&\text{if}\ r\ge 2.\end{cases}
\end{align*}
If we let $\mathcal{F}_{4,r}=\T(2,3,27,2\cdot 3^{2r+1})$, then obviously $\mathcal{F}_{4,r}\in \lambda_3^{-1}(\mathcal{F}_{3,r})$ for every $r\in \n$ but the quaternary triangular form $\mathcal{F}_{4,r}$ cannot be regular for any $r\ge 2$ because the ternary triangular form $\T(2,3,27)$ is irregular with $\psi(2,3,27)=54$ while $2\cdot 3^{2r+1}>54$ when $r>1$.


\begin{table}[ht]
\caption{All elements $\T(a_1,a_2,a_3,a_4)$ in $\lambda_p^{-1}(\T(2,3,3,2\cdot 3^{2r-1}))$, $p\in P'$}
\begin{tabular}{ccc}
\hline
$p$ & $(a_1,a_2,a_3,a_4)$ & $\psi(a_1,a_2,a_3,a_4)$ \\
\hline
\multirow{5}{*}{3} & $(1,1,6,2\cdot 3^{2r})$ & $\infty$ \\
& $(1,6,9,2\cdot 3^{2r})$ & $\infty$ \\
& $(2,27,27,2\cdot 3^{2r+1})$ & 3 \\
& ${}^{\dag}(1,2,6,9)$ & 4 \\
& ${}^{\dag}(2,6,9,9)$ & 3 \\
\hline
\multirow{5}{*}{5} & $(2,75,75,2\cdot 3^{2r-1})$ & 3 \\
& $(2,75,75,2\cdot 3^{2r-1}\cdot 5^2)$ & 5 \\
& $(3,50,75,2\cdot 3^{2r-1})$ & 2 \\
& $(3,50,75,2\cdot 3^{2r-1}\cdot 5^2)$ & 2 \\
& $(50,75,75,2\cdot 3^{2r-1})$ & 5 \\
\hline
\multirow{5}{*}{7} & $(2,147,147,2\cdot 3^{2r-1})$ & 3 \\
& $(2,147,147,2\cdot 3^{2r-1}\cdot 7^2)$ & 3 \\
& $(3,3,98,2\cdot 3^{2r-1}\cdot 7^2)$ & 2 \\
& $(3,98,147,2\cdot 3^{2r-1})$ & 2 \\
& $(3,98,147,2\cdot 3^{2r-1}\cdot 7^2)$ & 2 \\
\hline
\end{tabular}
\label{table2236}
\end{table}


In Table \ref{tablestream}, the streams upon $\E_i$ for all $i=1,2,\dots,29$ are given.
In the table, MS denotes the mainstreams upon $\E_i$, PT denotes the periodic tributaries and $S$ denotes the sporadic forms (for the terms, see Subsection \ref{subsecriv}).
Let us explain the contents given in the table.
For each $i=1,2,\dots,29$, we give $i$, $\E_i=\T(a_1,a_2,a_3,a_4)$, and the vector $(\# \text{MS},\# \text{PT},\# \text{S})$ in the first line and we provide MS, PT, S if exist.
Let $i=3$. Upon $\E_3=\T(1,1,3,7)$, there is a unique mainstream, a periodic tributary and a sporadic form upon $\E_3$.
The mainstream consists of two infinite families $\T(1,1,3,3^{2r-2}7)$ ($r\in \n$) and $\T(1,3,3,3^{2r-1}7)$ ($r\in \n$) which take turns appearing.
Obviously, $\lambda_3(\T(1,1,3,3^{2r}7)=(\T(1,3,3,3^{2r-1}7)$ and $\lambda_3(\T(1,3,3,3^{2r-1}7)=\T(1,1,3,3^{2r-2}7)$.
For every $r\in \n$, there is a tributary 
$$
(1,3,27,3^{2r+1}7)\ral{3}(1,3,9,3^{2r}7)\ral{3}(1,3,3,3^{2r-1}7)
$$
that feeds into the mainstream.
On the other hand, only when $r=1$, there is a sporadic form $\T(1,7,7,3^{2r-1}7)$ flows into $\T(1,1,3,3^{2r-2}7)$ via $\lambda_7$.
This sporadic form have failed to grow to a periodic tributary mainly due to the irregularity of its ternary section $\T(1,7,7)$, that is to say, the quaternary triangular form $\T(1,7,7,a_4)$ cannot be regular for $a_4>\psi(1,7,7)=41$.


\begin{table}[ht]
\caption{Stream upon $\E_i=\T(a_1,a_2,a_3,a_4)$, $i=1,2,\dots,29$}
\begin{tabular}{c p{9.1cm} c}
\hline
$i=1$ & $(a_1,a_2,a_3,a_4)=(1,1,3,3)$ & $(1,1,0)$ \\
\hline
MS & $(1,1,3,3^{2r-1})\lral{3}(1,3,3,3^{2r})$ \\
\hline
PT & $(1,3,27,3^{2r+2})\ral{3}(1,3,9,3^{2r+1})\ral{3}(1,3,3,3^{2r})$ \\
\hline
\hline
2 & $(1,1,3,6)$ & $(2,2,0)$ \\
\hline
\multirow{2}{*}{MS} & $(1,1,3,2\cdot 3^{2r-1})\lral{3}(1,3,3,2\cdot 3^{2r})$ \\
& $(1,1,6,3^{2r-1})\lral{3}(2,3,3,3^{2r})$ \\
\hline
\multirow{2}{*}{PT} & $(1,3,27,2\cdot 3^{2r+2})\ral{3}(1,3,9,2\cdot 3^{2r+1})\ral{3}(1,3,3,2\cdot 3^{2r})$ \\
& $(1,6,9,3^{2r+1})\ral{3}(2,3,3,3^{2r})$ \\
\hline
\hline
3 & $(1,1,3,7)$ & $(1,1,1)$ \\
\hline
MS & $(1,1,3,3^{2r-2}\cdot 7)\lral{3}(1,3,3,3^{2r-1}\cdot 7)$ \\
\hline
PT & $(1,3,27,3^{2r+1}\cdot 7)\ral{3}(1,3,9,3^{2r}\cdot 7)\ral{3}(1,3,3,3^{2r-1}\cdot 7)$ \\
\hline
S & $(1,7,7,21)\ral{7}(1,1,3,7)$ \\
\hline
\hline
4 & $(1,1,3,8)$ & $(1,1,0)$ \\
\hline
MS & $(1,1,3,2^3\cdot 3^{2r-2})\lral{3}(1,3,3,2^3\cdot 3^{2r-1})$ \\
\hline
PT & $(1,3,27,2^3\cdot 3^{2r+1})\ral{3}(1,3,9,2^3\cdot 3^{2r})\ral{3}(1,3,3,2^3\cdot 3^{2r-1})$ \\
\hline
\hline
5 & $(1,1,3,18)$ & $(1,1,0)$ \\
\hline
MS & $(1,1,3,2\cdot 3^{2r})\lral{3}(1,3,3,2\cdot 3^{2r+1})$ \\
\hline
PT & $(1,3,27,2\cdot 3^{2r+3})\ral{3}(1,3,9,2\cdot 3^{2r+2})\ral{3}(1,3,3,2\cdot 3^{2r+1})$ \\
\hline
\hline
6 & $(1,2,5,25)$ & $(1,0,0)$ \\
\hline
MS & $(1,2,5,5^{2r})\lral{5}(1,5,10,5^{2r+1})$ \\
\hline
\hline
7 & $(1,3,3,3)$ & $(1,1,0)$ \\
\hline
MS & $(1,3,3,3^{2r-1})\lral{3}(1,1,3,3^{2r})$ \\
\hline
PT & $(1,3,27,3^{2r+1})\ral{3}(1,3,9,3^{2r})\ral{3}(1,3,3,3^{2r-1})$ \\
\hline
\hline
8 & $(1,3,3,12)$ & $(2,1,0)$ \\
\hline
\multirow{2}{*}{MS} & $(1,3,12,3^{2r-1})\lral{3}(1,3,4,3^{2r})$ & \\
& $(1,3,3,2^2\cdot 3^{2r-1})\lral{3}(1,1,3,2^2\cdot 3^{2r})$ \\
\hline
PT & $(1,3,27,2^2\cdot 3^{2r+1})\ral{3}(1,3,9,2^2\cdot 3^{2r})\ral{3}(1,3,3,2^2\cdot 3^{2r-1})$ \\
\hline
\hline
9 & $(1,3,3,15)$ & $(1,1,1)$ \\
\hline
MS & $(1,3,3,3^{2r-1}\cdot 5)\lral{3}(1,1,3,3^{2r}\cdot 5)$ \\
\hline
PT & $(1,3,27,3^{2r+1}\cdot 5)\ral{3}(1,3,9,3^{2r}\cdot 5)\ral{3}(1,3,3,3^{2r-1}\cdot 5)$ \\
\hline
S & $(3,5,15,15)\ral{5}(1,3,3,15)$ \\
\hline
\hline
10 & $(1,3,4,18)$ & $(1,0,0)$ \\
\hline
MS & $(1,3,4,2\cdot 3^{2r})\lral{3}(1,3,12,2\cdot 3^{2r+1})$ \\
\hline
\hline
11 & $(1,3,27,54)$ & $(0,0,0)$ \\
\hline
\hline
12 & $(1,5,10,10)$ & $(1,0,0)$ \\
\hline
MS & $(1,5,10,2\cdot 5^{2r-1})\lral{5}(1,2,5,2\cdot 5^{2r})$ \\
\hline
\end{tabular}
\label{tablestream}
\end{table}


\begin{table}[ht]
\ContinuedFloat
\caption{Continued}
\begin{tabular}{c p{8.9cm} c}
\hline
$i=13$ & $(a_1,a_2,a_3,a_4)=(1,5,10,15)$ & $(1,0,1)$ \\
\hline
MS & $(1,5,10,3\cdot 5^{2r-1})\lral{5}(1,2,5,3\cdot 5^{2r})$ \\
\hline
S & $(3,5,15,30)\ral{3}(1,5,10,15)$ \\
\hline
\hline
14 & $(1,5,10,20)$ & $(1,0,0)$ \\
\hline
MS & $(1,5,10,2^2\cdot 5^{2r-1})\lral{5}(1,2,5,2^2\cdot 5^{2r})$ \\
\hline
\hline
15 & $(1,7,7,7)$ & $(0,0,0)$ \\
\hline
\hline
16 & $(1,7,7,14)$ & $(0,0,0)$ \\
\hline
\hline
17 & $(1,7,7,28)$ & $(0,0,0)$ \\
\hline
\hline
18 & $(1,7,7,35)$ & $(0,0,0)$ \\
\hline
\hline
19 & $(1,7,14,14)$ & $(0,0,0)$ \\
\hline
\hline
20 & $(1,7,14,21)$ & $(0,0,0)$ \\
\hline
\hline
21 & $(1,7,14,28)$ & $(0,0,0)$ \\
\hline
\hline
22 & $(2,2,3,9)$ & $(1,1,0)$ \\
\hline
MS & $(2,2,3,3^{2r})\lral{3}(1,6,6,3^{2r+1})$ \\
\hline
PT & $(2,3,18,3^{2r+2})\ral{3}(1,6,6,3^{2r+1})$ \\
\hline
\hline
23 & $(2,3,3,3)$ & $(1,1,1)$ \\
\hline
MS & $(2,3,3,3^{2r-1})\lral{3}(1,1,6,3^{2r})$ \\
\hline
PT & $(1,6,9,3^{2r})\ral{3}(2,3,3,3^{2r-1})$ \\
\hline
S & $(2,3,27,27)\ral{3}(1,6,9,9)$ \\
\hline
\hline
24 & $(2,3,3,6)$ & $(1,1,1)$ \\
\hline
MS & $(2,3,3,2\cdot 3^{2r-1})\lral{3}(1,1,6,2\cdot 3^{2r})$ \\
\hline
PT & $(1,6,9,2\cdot 3^{2r})\ral{3}(2,3,3,2\cdot 3^{2r-1})$ \\
\hline
S & $(2,3,27,54)\ral{3}(1,6,9,18)$ \\
\hline
\hline
25 & $(2,3,3,12)$ & $(1,1,0)$ \\
\hline
MS & $(2,3,3,2^2\cdot 3^{2r-1})\lral{3}(1,1,6,2^2\cdot 3^{2r})$ \\
\hline
PT & $(1,6,9,2^2\cdot 3^{2r})\ral{3}(2,3,3,2^2\cdot 3^{2r-1})$ \\
\hline
\hline
26 & $(2,3,3,15)$ & $(1,1,0)$ \\
\hline
MS & $(2,3,3,3^{2r-1}\cdot 5)\lral{3}(1,1,6,3^{2r}\cdot 5)$ \\
\hline
PT & $(1,6,9,3^{2r}\cdot 5)\ral{3}(2,3,3,3^{2r-1}\cdot 5)$ \\
\hline
\hline
27 & $(2,3,6,6)$ & $(0,0,1)$ \\
\hline
S & $(1,6,18,18)\ral{3}(2,3,6,6)$ \\
\hline
\hline
28 & $(2,3,6,9)$ & $(0,0,1)$ \\
\hline
S & $(1,6,18,27)\ral{3}(2,3,6,9)$ \\
\hline
\hline
29 & $(2,3,6,12)$ & $(0,0,1)$ \\
\hline
S & $(1,6,18,36)\ral{3}(2,3,6,12)$ \\
\hline
\end{tabular}
\end{table}

\section{Regular triangular forms of rank exceeding 2} \label{sechigh}

We introduce some notation that will be used throughout this section.
Define a set $A$ by
$$
A=\{ \pm 1\} \times \{ \pm 1\} \times \{ \pm 1\} \times \{ n\in \z : 0\le n\le 104\}.
$$
We fix an element $(\eta_3,\eta_5,\eta_7,\bar{\alpha})$ in $A$ for a moment.
Note that every object introduced in the next paragraph depends on the choice of this element.

Let
$$
S=\left\{ n\in \n : \left( \frac{8n+\bar{\alpha}}{p}\right)=\eta_p\ \text{for all}\ \ p\in P'\right\}.
$$
We denote the $i$-th smallest element in $S$ by $s_i$.
For example, if $(\eta_3,\eta_5,\eta_7,\bar{\alpha})=(1,1,1,0)$, then $s_1=2$, $s_2=8$, $s_3=23$, etc.
Define
\begin{align*}
U_1&=\{ b_1\in \n : b_1\le s_1\},\\
U_2&=\{ (b_1,b_2)\in \n^2 : b_1\in U_1,\ b_1\le b_2\le \psi_{S}(b_1)\}.
\end{align*}
For $q\in P'$, we define $\delta_q$ to be the smallest positive integer satisfying $\left(\dfrac{\delta_q}{q}\right)=\eta_q$.
For each vector $(b_1,b_2)\in U_2$ and a prime $q\in P'$, we define a set $T(b_1,b_2,q)$ to be
$$
\begin{cases}\{ n\in \n : 8n+b_1+b_2+\delta_q\ra \langle b_1,b_2,\delta_q\rangle \ \text{over}\ \z_q\}&\text{if}\ \left(\dfrac{b_i}{q}\right)\neq \eta_q\ \text{for}\ i=1,2,\\
\{ n\in \n : 8n+b_1+b_2\ra \langle b_1,b_2\rangle \ \text{over}\ \z_q\}&\text{otherwise},\end{cases}
$$
and put $T(b_1,b_2)=\bigcap_{q\in P'}T(b_1,b_2,q)$.
Define
$$
U_3=\{ (b_1,b_2,b_3)\in \n^3 : (b_1,b_2)\in U_2,\ b_2\le b_3\le \psi_{T(b_1,b_2)}(b_1,b_2)\}.
$$
We define a subset $U'_3$ of $U_3$ by
$$
U'_3=\{ (b_1,b_2,b_3)\in U_3 : b_3\le \psi(b_1,b_2,b_3)\}.
$$

Now we put
$$
U=\bigcup_{(\eta_3,\eta_5,\eta_7,\bar{\alpha})\in A}U'_3(\eta_3,\eta_5,\eta_7,\bar{\alpha}).
$$
With the help of computer, one may easily compute the set $U$.
Note that $\vert U\vert=77$ and all the elements of $U$ are listed in Table \ref{tableu3}.


\begin{table}[ht]
\caption{Elements $(a,b,c)$ of $U$ and their type}
\begin{small}
\begin{tabular}{cc|cc|cc|cc|cc}
\hline
$(a,b,c)$ & Type & \multicolumn{6}{c}{} \\
\hline
$(1,1,1)$ & $\mathbf{3'}$ & $(1,1,2)$ & $\mathbf{3'}$ & $(1,1,3)$ & $\mathbf{1}$ & $(1,1,4)$ & $\mathbf{3'}$ & $(1,1,5)$ & $ \mathbf{3'}$ \\
\hline
$(1,1,6)$ & $\mathbf{1}$ & $(1,1,9)$ & $ \mathbf{0'}$ & $(1,1,12)$ & $ \mathbf{1'}$ & $(1,1,18)$ & $\mathbf{0'}$ & $(1,1,21)$ & $\mathbf{2'}$ \\
\hline
$(1,2,2)$ & $\mathbf{3'}$ & $(1,2,3)$ & $\mathbf{3'}$ & $(1,2,4)$ & $\mathbf{3'}$ & $(1,2,5)$ & $\mathbf{1}$ & $(1,2,7)$ & $\mathbf{6'}$ \\
\hline
$(1,2,10)$ & $\mathbf{1'}$ & $(1,3,3)$ & $\mathbf{1}$ & $(1,3,4)$ & $\mathbf{1}$ & $(1,3,6)$ & $\mathbf{0'}$ & $(1,3,9)$ & $\mathbf{1}$ \\
\hline
$(1,3,10)$ & $\mathbf{2'}$ & $(1,3,12)$ & $\mathbf{1}$ & $(1,3,18)$ & $\mathbf{0'}$ & $(1,3,27)$ & $\mathbf{1}$ & $(1,3,30)$ & $\mathbf{2'}$ \\
\hline
$(1,4,6)$ & $\mathbf{1'}$ & $(1,4,9)$ & $\mathbf{0'}$ & $(1,5,5)$ & $\mathbf{0'}$ & $(1,5,10)$ & $\mathbf{1}$ & $(1,5,25)$ & $\mathbf{0'}$ \\
\hline
$(1,6,6)$ & $\mathbf{1}$ & $(1,6,9)$ & $\mathbf{1}$ & $(1,6,18)$ & $\mathbf{6}$ & $(1,6,27)$ & $\mathbf{0'}$ & $(1,7,7)$ & $\mathbf{6}$ \\
\hline
$(1,7,14)$ & $\mathbf{6}$ & $(1,9,9)$ & $\mathbf{0'}$ & $(1,9,12)$ & $\mathbf{1'}$ & $(1,9,18)$ & $\mathbf{0'}$ & $(1,9,21)$ & $\mathbf{2'}$ \\
\hline
$(1,5,45)$ & $\mathbf{6'}$ & $(1,21,21)$ & $\mathbf{2'}$ & $(2,2,3)$ & $\mathbf{1}$ & $(2,3,3)$ & $\mathbf{1}$ & $(2,3,6)$ & $\mathbf{6}$ \\
\hline
$(2,3,9)$ & $\mathbf{0'}$ & $(2,3,12)$ & $\mathbf{1'}$ & $(2,3,18)$ & $\mathbf{1}$ & $(2,3,27)$ & $\mathbf{6}$ & $(2,5,10)$ & $\mathbf{1'}$ \\
\hline
$(2,5,15)$ & $\mathbf{2'}$ & $(2,15,45)$ & $\mathbf{6'}$ & $(3,3,3)$ & $\mathbf{4'}$ & $(3,3,4)$ & $\mathbf{1'}$ & $(3,3,6)$ & $\mathbf{4'}$ \\
\hline
$(3,3,7)$ & $\mathbf{2'}$ & $(3,3,9)$ & $\mathbf{4'}$ & $(3,3,12)$ & $\mathbf{4'}$ & $(3,3,15)$ & $\mathbf{4'}$ & $(3,5,9)$ & $\mathbf{6'}$ \\
\hline
$(3,5,15)$ & $\mathbf{6}$ & $(3,6,6)$ & $\mathbf{4'}$ & $(3,6,9)$ & $\mathbf{4'}$ & $(3,6,12)$ & $\mathbf{4'}$ & $(3,6,15)$ & $\mathbf{5'}$ \\
\hline
$(3,6,21)$ & $\mathbf{6'}$ & $(3,6,30)$ & $\mathbf{5'}$ & $(3,7,7)$ & $\mathbf{2'}$ & $(3,7,63)$ & $\mathbf{2'}$ & $(3,9,10)$ & $\mathbf{6'}$ \\
\hline
$(3,15,15)$ & $\mathbf{5'}$ & $(3,15,30)$ & $\mathbf{5'}$ & $(3,15,75)$ & $\mathbf{5'}$ & $(3,21,21)$ & $\mathbf{6'}$ & $(3,21,42)$ & $\mathbf{6'}$ \\
\hline
$(5,6,15)$ & $\mathbf{2'}$ & $(6,15,30)$ & $\mathbf{5'}$ \\
\hline
\end{tabular}
\end{small}
\label{tableu3}
\end{table}


\begin{lem} \label{lemter}
Let $\T(a_1,a_2,\dots,a_k)$ be a regular $k$-ary triangular form with $k\ge 4$ and $a_1\le a_2\le \cdots \le a_k$.
Then the vector $(a_1,a_2,a_3)$ appears in Table \ref{tableu3}.
\end{lem}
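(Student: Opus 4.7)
The plan is to produce, for each regular $\E = \T(\mathbf{a}) = \T(a_1,\ldots,a_k)$ with $k \ge 4$ and $a_1 \le \cdots \le a_k$, a tuple $(\eta_3,\eta_5,\eta_7,\bar{\alpha}) \in A$ such that $(a_1,a_2,a_3)$ lies in $U'_3(\eta_3,\eta_5,\eta_7,\bar{\alpha})$, so that the claim follows from the definition of $U$. Write $\alpha = a_1 + \cdots + a_k$.

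First I would take $\bar{\alpha}$ to be the residue of $\alpha$ modulo $105$, and for each $p \in P'$ choose $\eta_p \in \{\pm 1\}$ so that every integer $n$ with $\left(\frac{8n+\bar{\alpha}}{p}\right) = \eta_p$ also satisfies $8n+\alpha \in \overline{DQ_p(\mathbf{a})}$. This is possible because $\overline{DQ_p(\mathbf{a})}$ is a union of cosets of $(\z_p^\times)^2$ in $\z_p$: when $\overline{DQ_p(\mathbf{a})} = \z_p$ either sign works, and otherwise the Jordan decomposition of $\mathcal{L}(\mathbf{a})_p$ pins down the required sign. By Lemma \ref{lemprimes}, $\E$ is stable at all primes $p > 7$, and local representability at $p=2$ is automatic (Section \ref{secpre}). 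Combining these, $S \subseteq T^{\mathrm{loc}}(\mathbf{a}) = T(\mathbf{a})$ by regularity.

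With $S \subseteq T(\mathbf{a})$ in hand, I extract the bounds defining $U'_3$ in sequence. Since the smallest positive value of $\E$ is $a_1$ and $s_1 = \min S \in T(\mathbf{a})$, we get $a_1 \le s_1$, so $a_1 \in U_1$. If $n_2 := \psi_S(a_1)$ is finite, then $n_2 \in T(\mathbf{a}) \setminus T(a_1)$, so any $\E$-representation of $n_2$ must have $P_3(x_i) \ge 1$ for some $i \ge 2$, forcing $a_2 \le a_i \le n_2$ and placing $(a_1,a_2) \in U_2$. The same argument applied to $n_3 := \psi_{T(a_1,a_2)}(a_1,a_2)$ gives $a_3 \le n_3$, provided $n_3 \in T^{\mathrm{loc}}(\mathbf{a})$; this holds because $n_3 \in T(a_1,a_2,q)$ directly guarantees the local representability of $8n_3 + \alpha$ at each $q \in P'$ (the auxiliary $\delta_q$ in the definition playing the role of any coefficient $a_i$, $i \ge 3$, of $\mathbf{a}$ with character $\eta_q$ at $q$), and local universality at the remaining primes was already established. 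So $(a_1,a_2,a_3) \in U_3$.

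Finally, to promote $(a_1,a_2,a_3)$ into $U'_3$, suppose for contradiction that $a_3 > \psi(a_1,a_2,a_3) =: n_4$. Then $n_4 \in T^{\mathrm{loc}}(a_1,a_2,a_3) \subseteq T^{\mathrm{loc}}(\mathbf{a}) = T(\mathbf{a})$, but $n_4 < a_3 \le a_i$ for all $i \ge 3$ forces every $\E$-representation of $n_4$ to have $P_3(x_i) = 0$ for $i \ge 3$; the representation then descends to one by $\T(a_1,a_2)$, so $n_4 \in T(a_1,a_2) \subseteq T(a_1,a_2,a_3)$, contradicting the definition of $\psi(a_1,a_2,a_3)$. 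Thus $(a_1,a_2,a_3) \in U'_3(\eta_3,\eta_5,\eta_7,\bar{\alpha}) \subseteq U$. I expect the main obstacle to be the very first step, the verification that the chosen $\eta_p$ really makes $S$ sit inside the $p$-locally represented integers of $\E$: the subtle case is when several of the $a_i$ are divisible by $p$, so that the unimodular Jordan component of $\mathcal{L}(\mathbf{a})_p$ degenerates into an anisotropic binary whose representation set depends sensitively on characters, requiring a careful case analysis of the $p$-adic shape of $\mathbf{a}$.
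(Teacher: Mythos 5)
Your proposal is correct and follows essentially the same route as the paper: fix $\bar{\alpha}=\alpha \bmod 105$ and signs $\eta_q$ so that $S\subseteq T^{\mathrm{loc}}(\mathbf{a})=T(\mathbf{a})$ (using Lemma \ref{lemprimes} for $q>7$ and automatic $2$-adic representability), then peel off the bounds $a_1\le s_1$, $a_2\le \psi_S(a_1)$, $a_3\le \psi_{T(a_1,a_2)}(a_1,a_2)$, $a_3\le\psi(a_1,a_2,a_3)$ by the ``smallest unused coefficient'' argument. The only cosmetic difference is that the step you flag as the main obstacle is trivialized in the paper by the concrete choice $\eta_q=\left(\frac{a_{i_q}}{q}\right)$ with $i_q$ the smallest index for which $a_{i_q}$ is prime to $q$, so that all units of character $\eta_q$ are already represented by the rank-one sublattice $\langle a_{i_q}\rangle$ and no analysis of the Jordan splitting of $\mathcal{L}(\mathbf{a})_q$ is needed; this choice also guarantees that $\delta_q$ is backed by an actual coefficient $a_{i_q}$ with $i_q\ge 3$ in the case where neither $a_1$ nor $a_2$ has character $\eta_q$.
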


\begin{proof}
Let $\E=\T(a_1,a_2,\dots,a_k)$ be a new regular triangular form with $k\ge 4$ and let $L=\langle a_1,a_2,\dots,a_k\rangle$.
Put $\alpha=a_1+a_2+\cdots+a_k$ and let $\bar{\alpha}$ denote the smallest nonnegative integer congruent to $\alpha$ modulo $105$.
For each prime $q\in P'$, we define $\eta_q=\left(\dfrac{a_{i_q}}{q}\right)$, where $i_q$ is the smallest index such that $a_{i_q}$ is coprime to $q$.
Note that any integer $u$ satisfying $\left( \dfrac{u}{q}\right)=\eta_q$ is represented by $L_q$.
Hence any integer in the set $S(\eta_3,\eta_5,\eta_7,\bar{\alpha})$ is locally represented by $\T(a_1,a_2,\dots,a_k)$.
Hence we have $a_1\le s_1(\eta_3,\eta_5,\eta_7,\bar{\alpha})$, and $a_2\le \psi_{S(\eta_3,\eta_5,\eta_7,\bar{\alpha})}(a_1)$.
This shows that $(a_1,a_2)\in U_2$.
For a positive integer $n$ and a prime $q\in P'$, if $8n+a_1+a_2$ is represented by $\langle a_1,a_2\rangle$ over $\z_q$, then $8n+\alpha$ is represented by $L_q$.
If $\left( \dfrac{a_i}{p}\right)\neq \eta_q$ for some $q\in P'$, then we have $i_q\ge 3$.
Thus if a positive integer $n$ satisfies that $8n+a_1+a_2+a_{i_q}$ is represented by $\langle a_1,a_2,a_{i_q}\rangle$, then $8n+\alpha$ is represented by $L_q$.
One may easily deduce from this that $(a_1,a_2,a_3)\in U_3$.
Since $\T(a_1,a_2,a_3,\dots,a_k)$ is regular with $a_1\le a_2\le a_3\le \cdots \le a_k$, we must have $a_3\le \psi(a_1,a_2,a_3)$.
Therefore, $(a_1,a_2,a_3)\in U'_3$.
This completes the proof.
\end{proof}


\begin{proof}[Proof of Theorem \ref{thmrk5}]
Suppose that $\E=\T(a_1,a_2,\dots,a_k)$ is a new regular $k$-ary triangular form with $a_1\le a_2\le \cdots \le a_k$ such that $k\ge 5$.
By Lemma \ref{lemter}, the vector $(a_1,a_2,a_3)$ should appear in Table \ref{tableu3}.
We here only provide the proof for the representative case for each types (types are given in Table \ref{tableu3}), for the vectors $(a_1,a_2,a_3)$ of the same type can be dealt with in a similar manner.
For the rest of the proof, we let $K=\langle a_1,a_2,a_3\rangle$ and $J=\langle a_1,a_2,a_3,a_4\rangle$.

\noindent (${\bf 0'}$) Assume that $(a_1,a_2,a_3)=(1,1,9)$. If $a_i$ is divisible by 9 for some $i\ge 4$, then one may easily check that $\T(a_i)$ is redundant in $\E$ by using Lemma \ref{lemnew}.
Hence neither $a_4$ nor $a_5$ is divisible by 9.
Then one may show that
$$
5\in T^{\text{loc}}(1,1,9,a_4,a_5)\subseteq T^{\text{loc}}(\E),
$$
which is absurd since $\T(1,1,9,a_4,\dots,a_k)$ cannot represent 5.

\noindent ({\bf 1}) Assume that $(a_1,a_2,a_3)=(1,1,3)$.
Let $s$ be the positive integer such that $\ord_3(a_4)\in \{2s-2,2s-1\}$.
Take $n=\dfrac{69\cdot 3^{2s-2}-5}{8}$.
Since
$$
8n+5+a_4=69\cdot 3^{2s-2}+a_4\equiv 0\Mod{3^{2s-2}},
$$
one may easily check that $8n+5+a_4$ is represented by $J_3$.
Since $Q(J_q)=\z_q$ for any odd prime $q$ greater than 3, it follows that $8n+5+a_4$ is represented by $J_q$ for all odd primes $q$.
Thus we have
$$
n\in T^{\text{loc}}(1,1,3,a_4)\subset T^{\text{loc}}(\E).
$$
On the other hand, one may easily check that $8n+5=69\cdot 3^{2s-2}$ is not represented by $K_3$.
Hence $n$ cannot be represented by $\T(1,1,3)$ and thus
$$
a_4\le n<9\cdot 3^{2s-2}.
$$
Thus we have
$$
a_4\in \{3^{2s-2},2\cdot 3^{2s-2},\dots,8\cdot 3^{2s-2}\}.
$$
Note that one should exclude the cases of $a_4\in \{1,2\}$ since we are assuming that $3\le a_4$.
Now, $\T(1,1,3,a_4)$ is a primitive regular triangular form by Proposition \ref{propreg}.
Let $t$ be the positive integer such that $\ord_3(a_5)\in \{2t-2,2t-1\}$.
If $t<s$, then one may easily deduce that
$$
a_5\in \{3^{2t-2},2\cdot 3^{2t-2},\dots,8\cdot 3^{2t-2}\},
$$
and this implies that $\T(a_4)$ is redundant in $\T(1,1,3,a_4,a_5)$ and hence redundant in $\E$ also.
If $t\ge s$, then one may easily check that $a_5$ is redundant in $\T(1,1,3,a_4,a_5)$ and hence in $\E$.
This is absurd.

\noindent (${\bf 1'}$) Assume that $(a_1,a_2,a_3)=(1,1,12)$.
Let $s$ be the positive integer such that $\ord_3(a_4)\in \{2s-2,2s-1\}$.
If $s=1$, then one may easily check that
$$
5\in T^{\text{loc}}(1,1,12,a_4)\subset T^{\text{loc}}(\E),
$$
which is absurd since $\T(1,1,12,a_4,\dots,a_k)$ cannot represent 5.
From now on, we may assume that $s\ge 2$.
Take $n=\dfrac{6\cdot 3^{2s-2}-14}{8}\in \n$.
Then one may easily show that
$$
n\in T^{\text{loc}}(\E)-T(1,1,12).
$$
This implies that $a_4\le n$.
This is absurd since $n<3^{2s-2}$ whereas $\ord_3(a_4)\in \{2s-2,2s-1\}$.

\noindent (${\bf 2'}$) Assume that $(a_1,a_2,a_3)=(1,1,21)$.
Let $s$ be the positive integer such that $\ord_3(a_4)\in \{2s-2,2s-1\}$.
Take $n=\dfrac{87\cdot 3^{2s-2}-23}{8}$.
Then $8n+23$ is represented by $K_7$ since it is coprime to 7.
It follows that $8n+23+a_4$ is represented by $J_7$.
Since $8n+23+a_4\equiv 0\Mod{3^{2s-2}}$, $8n+23+a_4$ is represented by $J_3$.
Hence, $8n+23+a_4$ is represented by $J_q$ over $\z_q$ for every odd prime $q$.
Thus we have
$$
n\in T^{\text{loc}}(1,1,21,a_4)\subset T^{\text{loc}}(\E).
$$
However, $n$ is not represented by $\T(1,1,21)$ since $8n+23=87\cdot 3^{2s-2}$ is not represented by $K_3$.
Hence we have
$$
a_4\le n=\frac{87\cdot 3^{2s-2}-23}{8},
$$
and thus
$$
a_4\in \{3^{2s-2},2\cdot 3^{2s-2},\dots,10\cdot 3^{2s-2}\}-\{9\cdot 3^{2s-2}\}.
$$
From this follows that $Q(J_7)=\z_7$.
Since $8\cdot 5+23=63$ is represented by $K_3$, we have $8\cdot 5+23+a_4$ is represented by $J_3$.
Then $8\cdot 5+23+a_4$ is locally represented by $J$.
It forces that $5\in T^{\text{loc}}(\E)$, which is absurd.

\noindent (${\bf 3'}$) Assume that $(a_1,a_2,a_3)=(1,1,1)$.
Then clearly $a_4$ is redundant in $\E$ and this contradicts to the newness of $\E$.

\noindent (${\bf 4'}$) Assume that $(a_1,a_2,a_3)=(3,3,3)$.
We may assume that $a_4$ is coprime to 3 since otherwise $\T(a_4)$ is redundant in $\E$.
Take $n\in \{1,2\}$ with $n\equiv a_4\Mod 3$.
Then one may easily show that
$$
n\in T^{\text{loc}}(3,3,3,a_4)\subset T^{\text{loc}}(\E),
$$
which is absurd.

\noindent (${\bf 5'}$) Assume that $(a_1,a_2,a_3)=(3,6,15)$.
Since $\E$ is primitive, there is an index $i$ with $1\le i\le k$ such that $a_i\not\equiv 0\Mod 3$.
Take $n\in \{1,2\}$ such that $n\equiv a_i\Mod 3$.
Since at least one of $8n+24+a_4$ and $8(n+3)+24+a_4$ is coprime to 5,
at least one of them are locally represented by $\langle 3,6,15,a_i\rangle$.
Hence $n$ or $n+3$ is in $T^{\text{loc}}(3,6,15,a_i)$, which is a subset of $T(\E)$.
This is absurd since none of 1,2,4 and 5 is represented by $\E=\T(3,6,15,a_4,\dots,a_k)$.

\noindent (${\bf 6}$) Assume that $(a_1,a_2,a_3)=(1,6,18)$.
Note that $\psi(1,6,18)=43$ and one may easily check with the help of computer that
$$
1\le \psi(1,6,18,a_4)<a_4,\ \ a_4\in \{18,19,\dots,43\}-\{18,27,36\}.
$$
Hence we have $a_4\in \{18,27,36\}$, and thus $\T(1,6,18,a_4)$ is regular by Proposition \ref{propreg}.
Note that $a_5\not\equiv 0\Mod 9$ since otherwise $a_5$ is redundant to $\T(1,6,18,a_4)$.
If $a_5\equiv 2\Mod 3$, then one may easily show that $2\in T^{\text{loc}}(1,6,18,a_4,a_5)$, which is absurd.
If we take a integer $n$ with $0\le n\le 8$ such that
$$
\begin{cases}8n+25+a_4+a_5\equiv 6\Mod 9&\text{if}\ a_5\equiv 1\Mod 3,\\
8n+25+a_4\equiv 3\Mod 9&\text{if}\ a_5\equiv 3,6\Mod 9,\end{cases}
$$
then one may easily check that
$$
n\in T^{\text{loc}}(1,6,18,a_4,a_5)-T^{\text{loc}}(1,6,18,a_4).
$$
This is absurd.

\noindent (${\bf 6'}$) Assume that $(a_1,a_2,a_3)=(1,2,7)$.
Note that $\psi(1,2,7)=11$, and thus we have $a_4\in \{7,8,9,10,11\}$.
Hence $Q(J_7)=\z_7$.
One may easily check that
$$
4\in T^{\text{loc}}(1,2,7,a_4)\subset T^{\text{loc}}(\E),
$$
which is absurd.

Hence we have a contradiction in any cases and this completes the proof.
\end{proof}


\begin{proof}[Proof of Theorem \ref{thmold}]
If $\E$ is new, then $I=\{1,2,\dots,k\}$.
Assume that $\E$ is old.
Then $\E$ can be obtained from a new regular triangular form, say $\mathcal{F}$, by redundant insertions.
By Theorems \ref{thmrk3}-\ref{thmrk5}, $\mathcal{F}$ must appear in either Table \ref{tablerk3} or Table \ref{tablerk4}.
The divisibility of the coefficients $a_j$ for $j\in \{1,2,\dots,k\}-I$ immediately comes from Lemma \ref{lemnew}.
This completes the proof.
\end{proof}


\begin{table}[ht]
\caption{The 49 vectors $(a_1,a_2,a_3)$ for which $\T(a_1,a_2,a_3)$ is primitive regular}
\begin{tabular}{lllllll}
\hline
\multicolumn{7}{c}{$(a_1,a_2,a_3)$}\\
\hline
${}^{\dag}(1,1,1)$, & ${}^{\dag}(1,1,2)$, & ${}^{\dag}(1,1,3)$, & ${}^{\dag}(1,1,4)$, & ${}^{\dag}(1,1,5)$, & ${}^{\dag}(1,1,6)$, & $(1,1,9)$,\\
\hline
${}^{\dag}(1,1,12)$, & $(1,1,18)$, & ${}^{\dag}(1,1,21)$, & ${}^{\dag}(1,2,2)$, & ${}^{\dag}(1,2,3)$, & ${}^{\dag}(1,2,4)$, & ${}^{\dag}(1,2,5)$,\\
\hline
${}^{\dag}(1,2,10)$, & $(1,3,3)$, & ${}^{\dag}(1,3,4)$, & $(1,3,6)$, & $(1,3,9)$, & ${}^{\dag}(1,3,10)$, & $(1,3,12)$,\\
\hline
$(1,3,18)$, & $(1,3,27)$, & $(1,3,30)$, & ${}^{\dag}(1,4,6)$, & $(1,4,9)$, & $(1,5,5)$, & $(1,5,10)$,\\
\hline
$(1,5,25)$, & $(1,6,6)$, & $(1,6,9)$, & $(1,6,27)$, & $(1,9,9)$, & $(1,9,12)$, & $(1,9,18)$,\\
\hline
$(1,9,21)$, & $(1,21,21)$, & ${}^{\dag}(2,2,3)$, & $(2,3,3)$, & $(2,3,9)$, & $(2,3,12)$, & $(2,3,18)$,\\
\hline
 $(2,5,10)$, & $(2,5,15)$, & $(3,3,4)$, & $(3,3,7)$, & $(3,7,7)$, & $(3,7,63)$, & $(5,6,15)$\\
\hline
\end{tabular}
\label{tablerk3}
\end{table}

\clearpage


\end{document}